\begin{document}
	\onehalfspacing
	
	\title{Hierarchies of holonomy groupoids for foliated bundles} 
	\author{Lachlan E. MacDonald\\School of Mathematical Sciences\\
		The University of Adelaide\\
		Adelaide, SA, 5000}

	\date{November 2020}
	
	\maketitle
	
	\begin{abstract}
		We give a new construction of the holonomy groupoid of a regular foliation in terms of a partial connection on a diffeological principal bundle of germs of transverse parametrisations, which may be viewed as a gauge-theoretic systematisation of Winkelnkemper's original construction.  We extend these ideas to construct a novel holonomy groupoid for any foliated bundle, which we prove sits at the top of a hierarchy of diffeological jet holonomy groupoids associated to the foliated bundle.  This shows that while the Winkelnkemper holonomy groupoid is the smallest Lie groupoid that integrates a foliation, it is far from the smallest diffeological groupoid that does so.
	\end{abstract}
	
	\tableofcontents

\section{Introduction}

The holonomy groupoid of a regular foliation, introduced by Winkelnkemper \cite{wink}, is a foundational object for the study of foliations via noncommutative geometry \cite{ncg}.  Despite its name, Winkelnkemper's construction of the holonomy groupoid bears no clear relation to the gauge-theoretic notion of holonomy obtained via bundles and connections.  In this paper we introduce a diffeological gauge theory for foliations that enables a new construction of Winkelnkemper's holonomy groupoid in terms of fibre bundles and connections, and which generalises easily to give novel holonomy groupoids for any foliated bundle, which fit into a natural hierarchy.

Holonomy is thought of in the language of differential geometry as an equivalence relation between smooth loops that is defined in terms of a certain first order differential equation.  More precisely, suppose we are given a fibre bundle $\pi_{B}:B\rightarrow M$ and a connection $\omega\in T^{*}B\otimes VB$, which we think of as a vector bundle projection from the tangent bundle $TB$ onto the vertical bundle $VB:=\ker(d\pi_{B})$  of $B$. To any smooth loop $\gamma$ based at $x\in M$ we have the differential equation
\begin{equation}\label{holeqn}
\omega\big(\partial_{t}\tilde{\gamma}\big) = 0 
\end{equation}
subject to the constraint $\pi_{B}(\tilde{\gamma}) = \gamma$, whose solutions $\tilde{\gamma}$ define a foliation of the restriction of $B$ to the image of $\gamma$.  One thereby obtains a diffeomorphism $T(\gamma)_{x}$ of $B_{x}$, called the \emph{parallel transport} of $\gamma$, which is defined by sending any $b\in B_{x}$ to the endpoint of the solution $\tilde{\gamma}$ through $b$.  The group of all such parallel transport diffeomorphisms is called the \emph{holonomy group} of $\omega$ at $x$ - if in particular $\pi_{B}$ is a principal $G$-bundle for some Lie group $G$, then each $T(\gamma)_{x}$ is an element of $G$.  There is of course nothing stopping one from generalising this idea to paths which are not loops - in this way one envisages a \emph{holonomy groupoid}, consisting of equivalence classes of paths whose parallel transport diffeomorphisms are equal.

From any foliated manifold $(M,\FF)$ there arises a canonical diffeological groupoid $\HH(M,\FF)$ \cite{wink, holimper, debord, iakovos2, iakovos4, VilGar1}, frequently called the \emph{holonomy groupoid} of $(M,\FF)$, whose constructions so far have appeared to take on a different character in that they tend not to invoke bundles or connections.  In this paper, we unify the two notions of holonomy using diffeological fibre bundles and partial connections thereon in the case of \emph{regular} foliated manifolds, by which we mean foliations whose leaves all have the same dimension.  This unification is obtained by passing from the category of smooth fibre bundles over smooth manifolds to diffeological fibre bundles over diffeological spaces.  The extra flexibility afforded by diffeology, which appears to be the most natural category for many constructions relating to foliations \cite{HMVSC, VilGar1}, permits the construction of novel diffeological fibre bundles consisting of germs of smooth functions.  Such bundles play a crucial role in the theory we develop in this paper, and we believe they may be of independent utility and interest.

Let us outline the contents of the paper.  In the first section we assemble the necessary background from the theory of regular foliated manifolds, including a recollection of Winkelnkemper's construction of the holonomy groupoid.  In this section we also recall Kamber and Tondeur's definition of foliated bundles, and give an account of the construction of the associated transverse jet bundles.  

The second section presents a quick account of the background needed from diffeology.  Much of this material is sourced from the marvellous book \cite{diffeology} and the papers \cite{hector, hector2, cw}.  We do, however, introduce some new adaptions of old ideas to the diffeological context.  Precisely, we generalise Kamber and Tondeur's concept of a partial connection in a fibre bundle to arbitrary diffeological fibre bundles.  We also introduce a diffeological version of the Moore path category $\PP(X)$ for any diffeological space $X$, which is a diffeological category whose morphisms are pairs $(\gamma,d)$ consisting of $d\in\RB_{+}:=[0,\infty)$ and a smooth path $\gamma:\RB_{+}\rightarrow X$ which is constant on $[d,\infty)$.  Composition of morphisms in this category is just concatenation of paths, which we show is smooth.  Using the theory of diffeological tangent bundles from \cite{hector, hector2, cw} we also specialise these diffeological Moore path categories to objects which consist only of paths whose tangents lie in some specified distribution, as is frequently required for foliations.  Precisely, for any diffeological space $X$ and subbundle $H$ of $TX$, $\PP_{H}(X)$ is the subcategory of $\PP(X)$ consisting of paths $\gamma:\RB_{+}\rightarrow M$ for which $\range(d\gamma)\subset H$.  We introduce in particular the ``leafwise path category" $\PP_{T\FF}(M)$ associated to any foliated manifold $(M,\FF)$.  Finally, we adapt from \cite{ptfunctor} the notion of a \emph{transport functor} from $\PP(X)$ to the structure groupoid of a diffeological fibre bundle $\pi_{B}:B\rightarrow X$, and introduce the \emph{holonomy groupoid} $\HH(T)$ of such a functor $T$ as the diffeological quotient of $\PP(X)$ by the ``kernel" of $T$.  We specialise this in particular to the notion of a \emph{leafwise transport functor} defined on the leafwise path category $\PP_{T\FF}(M)$ of a foliated manifold $(M,\FF)$.

In the third section, we consider a foliated manifold $(M,\FF)$ of codimension $q$.  We consider the sheaf $\DS(M,\FF)$ of so-called ``distinguished functions" \cite{holimper} on $M$, which are in effect local parametrisations of leaves.  On the set $\DS_{\g}(M,\FF)$, consisting of pairs $(x,[f]_{x})$ where $x\in M$ and where $[f]_{x}$ is the germ of a distinguished function at $x$, we give a natural diffeology under which the natural projection $\pi_{\DS_{\g}(M,\FF)}:\DS_{\g}(M,\FF)\rightarrow M$ is a diffeological fibre bundle, carrying a principal right action of the diffeological group $\g\Diff_{0}^{\loc}(\RB^{q})$ of germs at $0$ of local diffeomorphisms of $\RB^{q}$ defined in a neighbourhood of $0$.  A ``leafwise" principal partial connection $H$ is constructed for $\DS_{\g}(M,\FF)$, and we are then faced with the problem of lifting smooth paths in $M$ to paths in $\DS_{\g}(M,\FF)$ that are tangent to $H$.  We obtain the following theorem.

\begin{thm}\label{thm1}[Theorem \ref{lifting}]
	Let $(M,\FF)$ be a foliated manifold.  To each $(\gamma,d)\in\PP_{T\FF}(M)$ and each $(x,[f]_{x})\in\DS_{\g}(M,\FF)$, there exists a unique smooth map $\gamma_{[f]_{x}}:\RB_{+}\rightarrow\DS_{\g}(M,\FF)$, with $\gamma_{[f]_{x}}(0) = (x,[f]_{x})$, such that
	\begin{enumerate}
		\item $\range(d\gamma_{[f]_{x}})\subset H$, and
		\item $\pi_{\DS_{\g}(M,\FF)}\circ\gamma_{[f]_{x}} = \gamma$.
	\end{enumerate}
	The corresponding lifting map $\PP_{T\FF}(M)\times_{s,\pi_{\DS_{\g}(M,\FF)}}\DS_{\g}(M,\FF)\rightarrow \PP_{H}(\DS_{\g}(M,\FF))$ is smooth.
\end{thm}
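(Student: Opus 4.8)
The plan is to reduce the global lifting problem to an explicit local description inside foliated charts, where the leafwise partial connection $H$ forces a unique horizontal lift, and then to glue these local lifts using compactness of $[0,d]$. The starting observation is that a distinguished function is by definition constant on the plaques of $(M,\FF)$, so that if $f$ is a representative of $[f]_{x}$ defined on a foliated chart $U$ and $\gamma$ is leafwise, then $d(f\circ\gamma)_{t} = df(\partial_{t}\gamma) = 0$ on the interval where $\gamma$ remains in $U$, because $\partial_{t}\gamma\in T\FF = \ker(df)$. Thus $f\circ\gamma$ is locally constant along $\gamma$, and the germ $[f]_{\gamma(t)}$ obtained by restricting the fixed representative $f$ to a neighbourhood of $\gamma(t)$ is again a distinguished germ. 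By the construction of $H$, the curve $t\mapsto(\gamma(t),[f]_{\gamma(t)})$ is precisely the horizontal curve through $(x,[f]_{x})$: its image under $d\pi_{\DS_{\g}(M,\FF)}$ is $\partial_{t}\gamma$, and $H$ is defined so that its horizontal vectors are exactly the velocities of such sliding germs. Since $d\pi_{\DS_{\g}(M,\FF)}$ restricts to an isomorphism from $H$ onto $T\FF$, any lift satisfying (1) and (2) must have this velocity at every point, so within a single chart the lift is forced to be $t\mapsto(\gamma(t),[f]_{\gamma(t)})$; this gives both local existence and local uniqueness.

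To globalise, I would cover the compact image $\gamma([0,d])$ by finitely many foliated chart domains and choose a subdivision $0 = t_{0} < t_{1} < \cdots < t_{n} = d$ with $\gamma([t_{k},t_{k+1}])$ contained in a single chart. Starting from $(x,[f]_{x})$, define the lift on $[t_{0},t_{1}]$ by the local formula above; the endpoint germ $[f]_{\gamma(t_{1})}$ is then the initial datum for the next segment, on which a representative distinguished function on the following chart is chosen agreeing with this germ. Concatenating and invoking local uniqueness on the overlaps produces a well-defined smooth map on $[0,d]$; since $\gamma$ is constant on $[d,\infty)$, the forced local form shows the lift is constant there as well, so $\gamma_{[f]_{x}}$ is a genuine element of $\PP_{H}(\DS_{\g}(M,\FF))$ with the same stopping time $d$. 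Uniqueness of the global lift follows at once from uniqueness on each segment.

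Finally, and this I expect to be the crux, I would prove that the resulting lifting map is smooth in the diffeological sense. Given a plot of the fibre product $\PP_{T\FF}(M)\times_{s,\pi_{\DS_{\g}(M,\FF)}}\DS_{\g}(M,\FF)$, the functional diffeology on the Moore path category turns it into a smoothly parametrised family of leafwise paths together with smoothly varying initial germs, and the task is to show that the family of lifts is again a plot of $\PP_{H}(\DS_{\g}(M,\FF))$. The key point is that all the local choices above can be made locally uniformly in the plot parameter: by continuity the subdivision times and the ambient chart covering of $\gamma([0,d])$ may be taken constant on a neighbourhood of each parameter value, and the representative distinguished functions may be chosen to depend smoothly on the parameter using local triviality of $\pi_{\DS_{\g}(M,\FF)}$. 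Since on each chart the lift is literally the slid germ $[f]_{\gamma(t)}$ of a parameter-smooth representative, smoothness in the germ-bundle diffeology follows from the definition of that diffeology, while the passage from one segment to the next is governed by the $\g\Diff_{0}^{\loc}(\RB^{q})$-valued transition functions of the bundle, whose smoothness guarantees that the concatenated family remains a plot. The main obstacle is thus not existence or uniqueness, which are essentially formal once the constancy of $f\circ\gamma$ is observed, but rather the bookkeeping required to make the patching simultaneously compatible with the principal action, the germ diffeology, and smooth dependence on external parameters.
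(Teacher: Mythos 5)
Your overall strategy --- slide the germ along $\gamma$ inside a single foliated chart, glue along a finite chain of charts by compactness of $[0,d]$, and then make every choice locally uniform in the plot parameter to obtain smoothness of the lifting map --- is the same route the paper takes (Lemma \ref{formlem}, Theorem \ref{parallel} and Theorem \ref{lifting}). Your existence and smoothness arguments match the paper's, where the glueing is written out explicitly via the composites $y_{0,1}\circ\cdots\circ y_{k-1,k}\circ y_{k}$ of transverse coordinate changes, and the parameter dependence is handled with smoothly varying partition times and a smoothly varying element of $\Diff_{0}^{\loc}(\RB^{q})$ matching the initial germ to the chart's transverse coordinates.

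The one step that does not go through as written is local uniqueness. You argue that since $d\pi_{\DS_{\g}(M,\FF)}$ restricts to an isomorphism of $H_{(x,[f]_{x})}$ onto $T_{x}\FF$, any lift satisfying (1) and (2) ``must have this velocity at every point, so within a single chart the lift is forced to be $t\mapsto(\gamma(t),[f]_{\gamma(t)})$.'' In a diffeological space there is no Picard--Lindel\"{o}f theorem: two smooth curves with the same initial value and the same internal tangent vector at every time need not coincide, because the internal tangent space of a quotient such as $\DS_{\g}(M,\FF)$ can collapse a great deal of information --- a priori a nonconstant curve moving purely in the fibre, which is a torsor over the germ group $\g\Diff_{0}^{\loc}(\RB^{q})$, could have vanishing velocity at every time. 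What must actually be shown is that the condition $d\gamma_{[f]_{x}}(\partial_{t})\in H$, unwound through the generating relation for the internal tangent space of the quotient, forces the germ component of the lift to be \emph{locally constant as a germ}: for each $t$ there is a neighbourhood $V_{t}$ and a single representative $f_{t}$ with $\tilde{f}(s)|_{\dom(f_{t})} = f_{t}$ for all $s\in V_{t}$. This is precisely the content of the paper's Lemma \ref{formlem}, and it is from this germinal local constancy, propagated along $[0,d]$ by compactness, that uniqueness follows; ``equal velocities everywhere'' alone is not a valid substitute in this category.
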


The problem posed by Theorem \ref{thm1} is the solution of a parallel transport differential equation in a diffeological bundle.  This type of problem is considered for certain classes of infinite dimensional Lie groups in \cite{magnot0,magnot2}.  In our setting, the differential equation is solved as though the underlying geometry were a (possibly non-Euclidean) manifold - one solves the problem locally in parametrisations in whose coordinates the problem is easily soluble, and patches these parametrisations together at the end.  We see in this approach a formalisation of the old ideas of Winkelnkemper \cite{wink} and Phillips \cite{holimper} as solutions to the parallel transport problem in the diffeological bundle of germs, and we prove the following theorem.

\begin{thm}[Theorem \ref{coincide}]
	The holonomy groupoid associated to partial connection $H$ in the bundle of germs $\DS_{\g}(M,\FF)$ is isomorphic, as a diffeological groupoid, to the Winkelnkemper-Phillips holonomy groupoid.
\end{thm}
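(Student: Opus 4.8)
The plan is to realise both groupoids as quotients of one and the same diffeological category --- the leafwise path category $\PP_{T\FF}(M)$ --- by one and the same equivalence relation, after which the desired isomorphism becomes the map induced by the identity on $\PP_{T\FF}(M)$. By construction $\HH(T)$ is the quotient of $\PP_{T\FF}(M)$ by the kernel of the leafwise transport functor $T$ determined by $H$; write $(\gamma_{1},d_{1})\sim_{T}(\gamma_{2},d_{2})$ for this relation, meaning the two morphisms share source and target and have equal parallel transport $T(\gamma_{1},d_{1})=T(\gamma_{2},d_{2})$. The Winkelnkemper--Phillips groupoid $\HH(M,\FF)$, on the other hand, is the quotient of the leafwise paths by the holonomy relation $\sim_{\mathrm{hol}}$, under which two paths with common endpoints are identified exactly when they induce the same germ of holonomy diffeomorphism between local transversals. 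The proof therefore reduces to two claims: (i) that $\sim_{T}$ and $\sim_{\mathrm{hol}}$ coincide as relations on $\PP_{T\FF}(M)$, and (ii) that the two quotient diffeologies agree.

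For (i) I would compute the parallel transport explicitly and recognise it as holonomy. Fix $(\gamma,d)\in\PP_{T\FF}(M)$ from $x$ to $y$. By Theorem \ref{lifting}, each germ $(x,[f]_{x})$ admits a unique horizontal lift $\gamma_{[f]_{x}}$, whose endpoint $\gamma_{[f]_{x}}(d)=(y,[g]_{y})$ defines the fibre map $T(\gamma,d)\colon\DS_{\g}(M,\FF)_{x}\to\DS_{\g}(M,\FF)_{y}$; since $H$ is a principal partial connection this map is $\g\Diff_{0}^{\loc}(\RB^{q})$-equivariant and so is pinned down by a single structure-groupoid element once reference parametrisations at the endpoints are chosen. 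The point is that $H$ is \emph{leafwise}, so along each plaque the horizontal lift preserves the transverse coordinate of a distinguished function; patching the local solutions along a chain of foliated charts subordinate to $\gamma$, exactly as in the proof of Theorem \ref{lifting}, one sees that $[g]_{y}$ differs from a fixed reference germ at $y$ precisely by the germ obtained by sliding transversals along $\gamma$ --- that is, by the Winkelnkemper--Phillips holonomy germ of $\gamma$. Hence $T(\gamma,d)$ records exactly the holonomy of $\gamma$, and $\sim_{T}$ and $\sim_{\mathrm{hol}}$ coincide.

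Once (i) is in hand the underlying sets of the two groupoids are equal, and because the source, target, inversion and composition of each are induced from the single concatenation structure on $\PP_{T\FF}(M)$, the algebraic groupoid operations agree automatically; the remaining task, and the one I expect to be the main obstacle, is (ii). Both diffeologies are to be compared with the quotient diffeology of $\PP_{T\FF}(M)$. The quotient map onto $\HH(T)$ is smooth by definition, and the smoothness of the lifting map asserted in Theorem \ref{lifting} shows that the holonomy class of a smoothly varying family of leafwise paths varies smoothly, so the quotient map onto $\HH(M,\FF)$ is smooth too; together these make the identity-induced bijection smooth in one direction. The delicate direction is the converse: I would show that every plot of $\HH(M,\FF)$ lifts locally to a plot of leafwise paths, using that the classical holonomy groupoid is locally modelled on products of transversals, so that each point admits a smoothly chosen local leafwise path realising it. Producing these local lifts coherently is where the real work lies; granting them, the induced bijection is a diffeomorphism, and the two holonomy groupoids are isomorphic as diffeological groupoids.
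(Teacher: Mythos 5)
Your proposal follows essentially the same route as the paper: identify the kernel of the transport functor with the Winkelnkemper holonomy relation (the paper does this in Remark \ref{wink1}, observing that $T_{\FF}(\gamma)$ differs from Winkelnkemper's holonomy diffeomorphism only by a gauge transformation determined by the choice of chain, so the two equivalence relations agree), and then compare the two diffeologies by lifting plots locally in both directions. The one step you defer --- smoothly lifting a plot of $\HH(M,\FF)$ to a plot of $\PP_{T\FF}(M)$ --- is carried out in the paper exactly as you anticipate, by reading off from a Winkelnkemper chart $V_{(y,x,\gamma,\UU)}$ an explicit smoothly varying concatenation $s_{y(v)}\gamma_{z(v)}s_{x(v)}$ of a slid copy of $\gamma$ with straight-line paths in the initial and terminal plaques.
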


In the fourth and final section we generalise these ideas to give the first construction of the holonomy groupoid of a foliated bundle $\pi_{B}:B\rightarrow M$.  In this case, there are two ``types" of holonomy one must consider.  The first is the well-known vertical holonomy defined by lifting leafwise paths in $M$ to leafwise paths in $B$, giving rise to parallel transport maps between the fibres of $B$ arising from solutions to a differential equation of the form given in Equation \eqref{holeqn}.  This holonomy can be thought of as pertaining to parallel transport of 0-jets of sections of $B$, and we call the associated holonomy groupoid $\HH(T_{\pi_{B}^{0}})$ the ``fibre holonomy groupoid" of the foliated bundle $\pi_{B}$.  One obtains in a similar way the fibre holonomy groupoids $\HH(T_{\pi_{B}^{k}})$ of the foliated transverse jet bundles $\pi^{k}_{B}:J^{k}_{\t}(\pi_{B})\rightarrow M$.

The second is a kind of ``horizontal holonomy", which keeps track of entire transverse germs of ``distinguished sections" of $B$ that parametrise leaves in $M$ by leaves in $B$.  A natural diffeology is defined on the space $\DS_{\g}(\pi_{B})$ of germs of distinguished sections of $\pi_{B}$, for which the natural projection $\pi_{\DS_{\g}(\pi_{B})}:\DS_{\g}(\pi_{B})\rightarrow M$ is a diffeological fibre bundle.  We characterise this fibration as an associated bundle for the principal bundle $\pi_{\DS_{\g}(M,\FF)}:\DS_{\g}(M,\FF)\rightarrow M$, and exhibit a leafwise partial connection $H^{\g}_{B}$ for $\DS_{\g}(\pi_{B})$.  By similar arguments to those used in the proof of Theorem \ref{thm1}, we obtain the following theorem.
	
\begin{thm}[Theorem \ref{liftingB}]
	Let $\pi_{B}:B\rightarrow M$ be a foliated bundle.  To each $(\gamma,d)\in\PP_{T\FF}(M)$ and $(x,[\sigma]_{x})\in\DS_{\g}(\pi_{B})$, there is a unique smooth map $\gamma_{[\sigma]_{x}}:\RB_{+}\rightarrow\DS_{\g}(\pi_{B})$, with $\gamma_{[\sigma]_{x}} = (x,[\sigma]_{x})$, such that
	\begin{enumerate}
		\item $\range(d\gamma_{[\sigma]_{x}})\subset H^{\g}_{B}$, and
		\item $\pi_{\DS_{\g}(\pi_{B})}\circ\gamma_{[\sigma]_{x}} = \gamma$.
	\end{enumerate}
	The corresponding lifting map $\PP_{T\FF}(M)\times_{s,\pi_{\DS_{\g}(\pi_{B})}}\DS_{\g}(\pi_{B})\rightarrow\PP_{H^{\g}_{B}}(\DS_{\g}(\pi_{B}))$ is smooth.
\end{thm}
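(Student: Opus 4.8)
The plan is to reduce the claim to Theorem~\ref{thm1} by exploiting the characterisation of $\pi_{\DS_{\g}(\pi_{B})}:\DS_{\g}(\pi_{B})\to M$ as the bundle associated to the principal $\g\Diff_{0}^{\loc}(\RB^{q})$-bundle $\pi_{\DS_{\g}(M,\FF)}:\DS_{\g}(M,\FF)\to M$, together with the fact that $H^{\g}_{B}$ is the partial connection induced on this associated bundle by the leafwise principal partial connection $H$. Writing the typical fibre of $\DS_{\g}(\pi_{B})$ as $F$ and the defining quotient map as $q:\DS_{\g}(M,\FF)\times F\to\DS_{\g}(\pi_{B})$, every point over $x$ may be written as a class $q\big((x,[f]_{x}),\xi\big)$ with $(x,[f]_{x})\in\DS_{\g}(M,\FF)$ and $\xi\in F$, subject to the relation $q\big((x,[f]_{x})\cdot g,\,g^{-1}\cdot\xi\big)=q\big((x,[f]_{x}),\xi\big)$ for $g\in\g\Diff_{0}^{\loc}(\RB^{q})$. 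The candidate lift is then
\[
\gamma_{[\sigma]_{x}}(t):=q\big(\gamma_{[f]_{x}}(t),\,\xi\big),
\]
where $\gamma_{[f]_{x}}$ is the unique $H$-horizontal lift of $(\gamma,d)$ through $(x,[f]_{x})$ furnished by Theorem~\ref{thm1}.

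First I would fix a representative $(x,[\sigma]_{x})=q\big((x,[f]_{x}),\xi\big)$ and apply Theorem~\ref{thm1} to obtain $\gamma_{[f]_{x}}$ with $\gamma_{[f]_{x}}(0)=(x,[f]_{x})$, $\range(d\gamma_{[f]_{x}})\subset H$ and $\pi_{\DS_{\g}(M,\FF)}\circ\gamma_{[f]_{x}}=\gamma$, and set $\gamma_{[\sigma]_{x}}=q(\gamma_{[f]_{x}},\xi)$. Property~(2) is then immediate from the identity $\pi_{\DS_{\g}(\pi_{B})}\circ q=\pi_{\DS_{\g}(M,\FF)}\circ\mathrm{pr}_{1}$ and the corresponding property of $\gamma_{[f]_{x}}$. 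Property~(1) follows from the very definition of the associated partial connection: the horizontal distribution of $H^{\g}_{B}$ is by construction the image under $dq$ of $H$ (sitting inside $T\DS_{\g}(M,\FF)\oplus TF$ with zero $F$-component), so that $dq$ carries $H$-horizontal paths to $H^{\g}_{B}$-horizontal paths, giving $\range(d\gamma_{[\sigma]_{x}})\subset H^{\g}_{B}$. Uniqueness I would deduce from that in Theorem~\ref{thm1}: working in a local principal trivialisation over the image of $\gamma$, any lift satisfying (1) and (2) is of the form $q(\tilde\gamma,\xi)$ for an $H$-horizontal $\tilde\gamma$ covering $\gamma$ with $\tilde\gamma(0)=(x,[f]_{x})$, whence $\tilde\gamma=\gamma_{[f]_{x}}$.

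The main obstacle is to verify that $\gamma_{[\sigma]_{x}}$ is independent of the choice of representative $\big((x,[f]_{x}),\xi\big)$, which amounts to the $\g\Diff_{0}^{\loc}(\RB^{q})$-equivariance of the horizontal lift. Concretely, I must establish $\gamma_{[f]_{x}\cdot g}=\gamma_{[f]_{x}}\cdot g$ for every $g\in\g\Diff_{0}^{\loc}(\RB^{q})$, i.e.\ that $H$ is a genuine \emph{principal} partial connection whose horizontal distribution is invariant under the right action, so that right translation by $g$ carries the horizontal lift through $(x,[f]_{x})$ to the horizontal lift through $(x,[f]_{x})\cdot g$. Granting this, $q(\gamma_{[f]_{x}\cdot g},g^{-1}\cdot\xi)=q(\gamma_{[f]_{x}}\cdot g,\,g^{-1}\cdot\xi)=q(\gamma_{[f]_{x}},\xi)$, as required; this is where the precise interaction between the action on germs of distinguished functions and on germs of distinguished sections must be pinned down.

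Finally, smoothness of the induced lifting map is a diffeological statement to be checked on plots. Given a plot of $\PP_{T\FF}(M)\times_{s,\pi_{\DS_{\g}(\pi_{B})}}\DS_{\g}(\pi_{B})$, I would lift it locally through a plot of the corresponding fibre product with the principal bundle $\DS_{\g}(M,\FF)$, apply the smoothness clause of Theorem~\ref{thm1} to obtain a smooth family in $\PP_{H}(\DS_{\g}(M,\FF))$, and then postcompose with the smooth quotient map $q$. Since $q$ is smooth and the concatenation and evaluation maps underlying the diffeology on $\PP_{H^{\g}_{B}}(\DS_{\g}(\pi_{B}))$ are smooth, the composite is a plot, establishing smoothness of the lifting map.
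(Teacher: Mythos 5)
Your reduction to Theorem \ref{thm1} via the associated-bundle description of $\DS_{\g}(\pi_{B})$ is a genuinely different route from the paper's proof, which instead redoes the explicit construction of Theorems \ref{parallel} and \ref{lifting} directly (an analogue of Lemma \ref{formlem}, then existence via a chain of foliated charts equipped with local trivialisations of $B$, with the transition functions $\tau_{i,i-1}$ composed into the section $\tilde{\sigma}(t)$). Your route is the one gestured at in Remark \ref{assoc2}, but as written it has a real gap: the candidate lift $\gamma_{[\sigma]_{x}}(t)=q\big(\gamma_{[f]_{x}}(t),\xi\big)$ with the fibre element $\xi$ held fixed transports only the transverse parametrisation and omits the fibre holonomy of $B$ entirely. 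In Remark \ref{assoc} the relevant quotient is of the \emph{fibre product} $\DS_{\g}(M,\FF)\times_{M}\BS_{\g}$, not of a product $\DS_{\g}(M,\FF)\times F$ with a fixed typical fibre: an element of $\BS_{\g}$ is the germ of a map $\RB^{q}\to B$ landing in a single fibre $B_{x}$, so $\big(\gamma_{[f]_{x}}(t),\xi\big)$ is not even in the domain of $q$ for $t>0$ unless $\xi$ is simultaneously carried from $B_{x}$ to $B_{\gamma(t)}$. The correct formula (Remark \ref{assoc2}) applies the fibre transport $T_{\pi_{B}^{0}}$ of Theorem \ref{bicboi} to the second component. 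Relatedly, your assertion that $H^{\g}_{B}$ is the image under $dq$ of $H$ "with zero $F$-component" is not the paper's definition: $H^{\g}_{B}$ is generated by the plots $t\mapsto(\gamma(t),[\sigma]_{\gamma(t)})$ for a fixed distinguished section $\sigma$, and since $t\mapsto\sigma(\gamma(t))$ is a leafwise path in $B$, the corresponding tangent has a nonzero fibre component along $T\FF_{B}$.

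The omission is not cosmetic. In the extreme case $q=0$ (a flat bundle) the group $\g\Diff_{0}^{\loc}(\RB^{0})$ is trivial, $\DS_{\g}(M,\FF)\cong M$ and $\DS_{\g}(\pi_{B})\cong B$, so the theorem reduces to Theorem \ref{transport} and the lift is the genuinely nontrivial parallel transport along $T\FF_{B}$; your formula with $\xi$ fixed returns the constant lift. To repair the argument you would need to (i) work in the fibre-product model and define the lift by combining the principal-bundle lift $\gamma_{[f]_{x}}$ of Theorem \ref{thm1} with the fibre lift $L_{B}^{0}$ of Theorem \ref{transport} applied to the values of $\eta$ along $\gamma|_{[0,t]}$; (ii) prove the equivariance $\gamma_{[f]_{x}\cdot g}=\gamma_{[f]_{x}}\cdot g$ that you correctly identify as necessary (this follows from $H$ being a principal partial connection, Proposition \ref{Hcon}, together with uniqueness in Theorem \ref{parallel}), as well as the compatibility of the fibre transport with the $\g\Diff_{0}^{\loc}(\RB^{q})$-action on $\BS_{\g}$; and (iii) check directly that the resulting path is $H^{\g}_{B}$-horizontal for the paper's definition of $H^{\g}_{B}$, and that every lift satisfying the two conditions arises this way, so that uniqueness transfers. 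With those repairs your smoothness argument would go through essentially as you describe, by postcomposing the smooth maps $L_{\FF}$, $L_{B}^{0}$ and $q$.
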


We refer to the holonomy groupoid $\HH(\DS_{\g}(\pi_{B}))$ arising from this theorem as the \emph{holonomy groupoid of $\pi_{B}$}.  In particular, when $B = \Fr(M/\FF)$ is the transverse frame bundle of any foliation $(M,\FF)$, then we show in Example \ref{fr} how recover the holonomy groupoid of $(M,\FF)$ as a special case of this general construction.  Finally, we prove in Theorem \ref{hierarchy} that this holonomy groupoid sits at the top of a canonical hierarchy of diffeological holonomy groupoids constructed from the foliations of the transverse jet bundles of $\pi_{B}$.  This shows in particular that while the Winkelnkemper holonomy groupoid of a foliation is the smallest Lie groupoid whose orbits are the leaves \cite{holimper}, it is by no means the smallest diffeological groupoid with this property.

\begin{thm}[Theorem \ref{hierarchy}]
	Let $\pi_{B}:B\rightarrow M$ be a foliated bundle.  Then for each $k\in\NB$, there are surjective morphisms $\Pi_{B}^{k,\g}:\HH(T_{\pi_{B}^{\g}})\rightarrow\HH(T_{\pi_{B}^{k}})$ and $\Pi_{B}^{k,k+1}:\HH(T_{\pi_{B}^{k+1}})\rightarrow\HH(T_{\pi_{B}^{k}})$ of diffeological groupoids for which $\Pi_{B}^{k,\g} = \Pi_{B}^{k,k+1}\circ\Pi_{B}^{k+1,\g}$.  Consequently we have a commuting diagram
	\begin{center}
		\begin{tikzcd}[row sep = large]
			& & & \HH(T_{\pi_{B}^{\g}}) \ar[d,"\Pi_{B}^{\infty,\g}"] & & & \\ & & & \HH(T_{\pi_{\infty}}) \ar[dl,"\Pi_{B}^{k+1,\infty}"] \ar[dr,"\Pi_{B}^{k,\infty}"'] \ar[drrr,"\Pi_{B}^{0,\infty}"'] & & &\\ & \hdots\ar[r] & \HH(T_{\pi_{B}^{k+1}}) \ar[rr,"\Pi_{B}^{k,k+1}"'] & & \HH(T_{\pi_{B}^{k}}) \ar[r] & \hdots \ar[r] & \HH(T_{\pi_{B}})
		\end{tikzcd}
	\end{center}
	of diffeological groupoids, which we refer to as the \textbf{hierarchy of holonomy groupoids} for the foliated bundle $\pi_{B}$.
\end{thm}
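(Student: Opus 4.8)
The plan is to build the hierarchy from the functoriality of the jet-projection maps combined with the universal property of the holonomy groupoids as diffeological quotients of the leafwise path category. The starting observation is that for each $k$ there is a smooth bundle projection $\pi^{k+1,k}:J^{k+1}_{\t}(\pi_{B})\rightarrow J^{k}_{\t}(\pi_{B})$ truncating a $(k+1)$-jet to its $k$-jet, and likewise a projection from the germ bundle $\DS_{\g}(\pi_{B})$ onto each $J^{k}_{\t}(\pi_{B})$ sending a germ to its $k$-jet at the base point. First I would verify that these projections are morphisms of the relevant diffeological principal (or associated) bundles, intertwining the partial connections $H^{\g}_{B}$ and the connections on the jet bundles; concretely, one checks that the horizontal distribution $H^{\g}_B$ pushes forward under each truncation to the horizontal distribution defining the fibre holonomy on $J^{k}_{\t}(\pi_{B})$. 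Because Theorems \ref{thm1}, \ref{liftingB} give \emph{unique} horizontal lifts, intertwining the connections immediately forces the lift of a leafwise path through a germ to project onto the lift through the truncated jet. Thus the parallel transport of $\DS_{\g}(\pi_{B})$ commutes with that of each $J^{k}_{\t}(\pi_{B})$, which is the geometric heart of the argument.

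With the lifts intertwined, I would pass to the level of transport functors. The covering of connections means that as maps out of the common leafwise path category $\PP_{T\FF}(M)$, the transport functor $T_{\pi_{B}^{\g}}$ refines each $T_{\pi_{B}^{k}}$: if two leafwise paths have equal parallel transport in $\DS_{\g}(\pi_{B})$, then projecting shows they have equal parallel transport in $J^{k}_{\t}(\pi_{B})$. Recalling from the second section that $\HH(T)$ is defined as the diffeological quotient of $\PP_{T\FF}(M)$ by the kernel of $T$, this refinement says exactly that the kernel of $T_{\pi_{B}^{\g}}$ is contained in the kernel of $T_{\pi_{B}^{k}}$, and similarly the kernel of $T_{\pi_{B}^{k+1}}$ is contained in the kernel of $T_{\pi_{B}^{k}}$. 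An inclusion of kernels descends to a well-defined surjective map on quotients, so I would invoke the universal property of the quotient diffeology to obtain the morphisms $\Pi_{B}^{k,\g}$ and $\Pi_{B}^{k,k+1}$ and to see that they are smooth and surjective. Surjectivity is automatic since both quotients are images of the same path category and the projections are compatible with the quotient maps; that the maps are groupoid morphisms follows because concatenation of paths descends consistently to both quotients.

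The compositional identities $\Pi_{B}^{k,\g}=\Pi_{B}^{k,k+1}\circ\Pi_{B}^{k+1,\g}$, and the analogous relations involving the inverse-limit object $\HH(T_{\pi_\infty})$ appearing in the diagram, then reduce to the functoriality of jet truncation: the tower of truncations $\DS_{\g}(\pi_B)\rightarrow\cdots\rightarrow J^{k+1}_{\t}(\pi_B)\rightarrow J^{k}_{\t}(\pi_B)$ is strictly compatible, so the induced kernel inclusions compose strictly, and uniqueness in the universal property of the quotient forces the induced maps to compose strictly as well. For the $\infty$-level maps I would identify $J^{\infty}_{\t}(\pi_B)$ as the projective limit of the finite jet bundles in the diffeological category, so that $\HH(T_{\pi_\infty})$ receives a map from $\HH(T_{\pi_B^{\g}})$ and maps compatibly to each $\HH(T_{\pi_B^{k}})$, and then verify the triangles commute by the same uniqueness argument.

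I expect the main obstacle to be the first step: proving rigorously that the truncation maps intertwine the partial connections in the diffeological setting, and hence genuinely send horizontal lifts to horizontal lifts. The germ bundle $\DS_{\g}(\pi_B)$ is an infinite-dimensional diffeological object whose connection $H^{\g}_B$ is defined via distinguished sections, whereas the jet bundles are finite-dimensional; checking that the pushforward of $H^{\g}_B$ under truncation is exactly the connection underlying $T_{\pi_B^{k}}$—rather than merely a connection projecting onto the same base directions—requires unwinding the explicit construction of $H^{\g}_B$ from the previous section and confirming that the jet of the lifted germ solves the jet-level parallel transport equation. Once this compatibility of connections is in hand, everything else is a formal consequence of the universal property of the quotient diffeology and the functoriality of jets, and I would keep those verifications brief.
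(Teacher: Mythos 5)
Your proposal is correct and follows essentially the same route as the paper: the paper likewise establishes (via Proposition \ref{gjet} and the commuting square of lifting maps $L_{B}^{\g}$ and $L_{B}^{k}$) that horizontal lifts in $\DS_{\g}(\pi_{B})$ project to horizontal lifts in $J^{k}_{\t}(\pi_{B})$, deduces the inclusion of kernels of the transport functors, and obtains the surjective smooth morphisms from the quotient diffeology together with the universal property of the projective limit for the $\infty$-level. The compatibility of connections that you flag as the main obstacle is exactly the content the paper isolates in Proposition \ref{gjet} and the uniqueness clauses of Theorems \ref{parallel} and \ref{liftingB}.
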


Let us conclude the introduction by remarking that we have \emph{deliberately} chosen to phrase our results only in the diffeological category, and do not speculate on how they may fit into the topological category.  There are several reasons for this, in addition to the relative niceness of the diffeological category and the ease of its use.  Firstly, diffeological spaces can always be equipped with a natural topology (the D-topology \cite[Section 2.8]{diffeology}) under which all plots are continuous, and it is by now becoming apparent that for the holonomy groupoids of singular foliations it is the diffeology, and not the induced D-topology, that is more fundamental \cite{VilGar1}.  Secondly, there is already some indication that the tools for which one requires topology, for instance $C^{*}$-algebras and bivariant $K$-theory, can already be made to work for the diffeological holonomy groupoids of singular foliations \cite{iakovos} by working directly with plots.  Thus using the diffeological category does not preclude building upon the extensive work done on studying foliations from the perspective of noncommutative geometry \cite{cs, ms, heitsch2, ben3, goro3, ben4, ben5, pcr, ben7, ben6, ben2, macr1, mac2}.  Finally, since de Rham cohomology works perfectly well for diffeological spaces \cite{diffeology}, the diffeological category seems to be the correct setting for better understanding the equivariant cohomology of foliated manifolds \cite{cyctrans,hopf1,goro1,diffcyc,goro2,crainic1,moscorangi, backindgeom,mosc1} in a non-\'{e}tale context \cite{mac1}.  The equivariant cohomology of foliated bundles will be presented in a future paper, where the hierarchy of holonomy groupoids will play a crucial role.

\subsection{Acknowledgements}

This research was funded by ARC grants DP170100247 and DP200100729.  I thank A. Carey and A. Rennie for their consistent interest and support.  Special thanks go to A. Carey for funding a short postdoc at The Australian National University, where most of this work was undertaken, and for many highly informative discussions over coffee, both about mathematics and everything else.

\section{Background on foliations}\label{sc1}

\subsection{Foliations and the Winkelnkemper-Phillips holonomy groupoid}

Recall that a \emph{codimension $q$ foliation} of a connected $n$-manifold $M$ is an integrable subbundle $T\FF$ of $TM$ of rank $p:=n-q$.  The maximal integral submanifolds of $T\FF$ define a family of mutually disjoint, $p$-dimensional immersed submanifolds of $M$ called \emph{leaves}, whose union is $M$.  It is convenient to describe such a foliation locally by charts of the following form.

\begin{defn}
	A \textbf{foliated chart of codimension $q$} for an $n$-manifold $M$ is a chart $(U\subset M,\psi:U\rightarrow\RB^{n})$ for $M$, whose image is equipped with the foliation arising from the trivial  foliation $\RB^{n} = \RB^{p}\times\RB^{q}$ of $\RB^{n}$.
\end{defn}

The preimages $P^{y}:=\psi^{-1}(\RB^{p}\times\{y\})$, $y\in\RB^{q}$ are called the \textbf{plaques} of $(U,\psi)$, while the preimages $S^{x}:=\psi^{-1}(\{x\}\times\RB^{q})$, $x\in\RB^{p}$, are called the \textbf{local transversals} of $(U,\psi)$.  We will often write $\psi$ as the pair of submersions $(x:U\rightarrow\RB^{p},y:U\rightarrow\RB^{q})$ defined by the compositions of $\psi$ with the projections from $\RB^{n}$ onto $\RB^{p}$ and $\RB^{q}$ respectively, so that $(U,\psi) = (U,x,y)$.

It is then a well-known fact \cite[Ch. 2]{cc1} that foliations of a manifold are in bijective correspondence with atlases of foliated charts which satisfy an appropriate glueing condition.

\begin{defn}\label{charts}
	Let $M$ be an $n$-manifold, let $0\leq q\leq n$ and define $p:=n-q$.  A \textbf{foliated atlas of codimension $q$} for $M$ is a maximal family $\{(U_{\alpha},\psi_{\alpha} = (x_{\alpha},y_{\alpha}))\}_{\alpha\in\AF}$ of foliated charts of codimension $q$ for $M$ such that for all $\alpha,\beta\in\AF$, the change of coordinates $\psi_{\alpha\beta}:=\psi_{\alpha}\circ\psi_{\beta}^{-1}:\psi_{\beta}(U_{\alpha}\cap U_{\beta})\rightarrow\psi_{\alpha}(U_{\alpha}\cap U_{\beta})\subset\RB^{p}\times\RB^{q}$ has the form
	\begin{equation}\label{compatible}
	\psi_{\alpha\beta}(x^{\beta},y^{\beta}) = (x_{\alpha\beta}(x^{\beta},y^{\beta}),y_{\alpha\beta}(y^{\beta}))
	\end{equation}
	for all $(x^{\beta},y^{\beta})\in\psi_{\beta}(U_{\alpha}\cap U_{\beta})\subset\RB^{p}\times\RB^{q}$.  The $y_{\alpha\beta}$ have the \textbf{cocycle property}
	\[
	y_{\alpha\beta} = y_{\alpha\delta}\circ y_{\delta\beta}
	\]
	on $\psi_{\beta}(U_{\alpha}\cap U_{\beta}\cap U_{\delta})$.
\end{defn}

In less notation-heavy language, a foliated atlas for a manifold is an atlas of foliated charts which are sufficiently nicely behaved that each plaque of any one member of the atlas meets at most one plaque of any other member of the atlas, and does so in an open set of both plaques.  The leaves of the associated foliation are then the classes of the equivalence relation which identifies any two points which may be connected by a path that may be covered by a chain of intersecting plaques.

Definition \ref{charts} is useful for defining the holonomy diffeomorphisms associated to leafwise paths in a foliation.  Let $\gamma$ be a path in a leaf of a foliated manifold $(M,\FF)$, with startpoint (source) $x$ and endpoint (range) $y$.  Let $\UU = \{U_{0},\dots,U_{k}\}$ be a family of foliated charts covering $\gamma$, with $x\in U_{0}$ and $y\in U_{k}$, which is a \emph{chain} in the sense that $U_{i}\cap U_{i-1}\neq\emptyset$ for all $1\leq i\leq k$.  We refer to $U_{0}$ and $U_{k}$ as the \emph{initial} and \emph{terminal} charts respectively.  Now any $x'$ sufficiently close to $x$ in the local transversal through $x$ associated to $U_{0}$ is contained in a unique plaque $P_{0}$ of $U_{0}$, and determines, for each $1\leq i\leq k$, a plaque $P_{i}$ of $U_{i}$ such that
\[
P_{i}\cap P_{i-1}\neq 0.
\]
Thanks to Definition \ref{charts}, each $P_{i}$ is uniquely determined by this constraint.  In particular $P_{k}$ intersects the local transversal through $y$ associated to $U_{k}$ in a single point $y'$, and we define
\[
H_{\gamma}^{\UU}(x'):=y'.
\]
Winkelnkemper \cite{wink} referred to $H_{\gamma}^{\UU}$ as a ``holonomy diffeomorphism", a nomenclature which, for the sake of historical consistency, we will continue to use here.  We will see in Section \ref{ss1}, however, that Winkelnkemper's holonomy diffeomorphisms differ, in general, from ``true" holonomy transport maps by an element of the structure group of a particular diffeological principal bundle.

\begin{defn}\label{holdiff}
	Let $(M,\FF)$ be a foliated manifold, $x$ and $y$ points in a leaf $L$ of $\FF$, and $\gamma$ a path connecting $x$ and $y$ that is entirely contained within $L$.  For any chain $\UU$ of foliated charts covering $\gamma$, the associated local diffeomorphism $H_{\gamma}^{\UU}$ is called the \textbf{holonomy diffeomorphism} associated to $\gamma$ and $\UU$.
\end{defn}

Winkelnkemper's goal in introducing holonomy diffeomorphisms was the construction of the \emph{graph} (or, in more modern terminology, the \emph{holonomy groupoid}) of a foliation.  Before we recall the construction of this groupoid, let us list the properties enjoyed by holonomy diffeomorphisms.

\begin{prop}\cite[p. 59]{wink}\label{props}
	Let $(M,\FF)$ be a foliated manifold.
	\begin{enumerate}
		\item Suppose $\gamma$ and $\gamma'$ are two leafwise paths with the same endpoints, and that there exist chains $\UU$ and $\UU'$ covering $\gamma$ and $\gamma'$ respectively that have the same initial and terminal charts.  If $H_{\gamma}^{\UU}$ and $H_{\gamma'}^{\UU'}$ have the same germ, then for any other pair $\VV$ and $\VV'$ of chart chains with the same initial and terminal charts covering $\gamma$ and $\gamma'$ respectively, $H_{\gamma}^{\VV}$ and $H_{\gamma'}^{\VV'}$ also have the same germ.
		\item If $\gamma$ and $\gamma'$ are homotopic leafwise paths with the same endpoints, and if $\UU$ and $\UU'$ are chains covering $\gamma$ and $\gamma'$ respectively that have the same initial and terminal charts, then $H^{\UU}_{\gamma}$ and $H^{\UU'}_{\gamma'}$ have the same germ.\qed
	\end{enumerate}
\end{prop}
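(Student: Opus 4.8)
The plan is to reduce everything to a computation with the transverse transition functions $y_{\alpha\beta}$ and the cocycle property of Definition \ref{charts}. First I would record the basic mechanism underlying the definition of $H_{\gamma}^{\UU}$: along a chain $\UU = \{U_{0},\dots,U_{k}\}$ covering $\gamma$, following plaques from $U_{i-1}$ into $U_{i}$ sends a point whose transverse coordinate in $U_{i-1}$ is $\eta$ to the point whose transverse coordinate in $U_{i}$ is $y_{i,i-1}(\eta)$, precisely because the compatibility condition \eqref{compatible} forces the plaque $\{y_{i-1} = \eta\}$ to meet $U_{i}$ in the plaque $\{y_{i} = y_{i,i-1}(\eta)\}$. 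Hence, read through the transverse charts $y_{0}$ at $x$ and $y_{k}$ at $y$, the map $H_{\gamma}^{\UU}$ is exactly the germ at $y_{0}(x)$ of the composite $y_{k,k-1}\circ\cdots\circ y_{1,0}$.

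The core of the argument is then a lemma describing how this germ changes under a change of chain. I would first show that inserting a chart $U'$ between $U_{i-1}$ and $U_{i}$ replaces the factor $y_{i,i-1}$ by $y_{i,i'}\circ y_{i',i-1}$, which equals $y_{i,i-1}$ by the cocycle property; deletion and replacement of charts are handled identically. Since any two chains covering the compact path $\gamma$ admit a common refinement (via a Lebesgue-number subdivision of the domain of $\gamma$), it follows that two chains with the same initial and terminal charts yield holonomy diffeomorphisms with the same germ. More generally, passing from initial/terminal charts $U_{0},U_{k}$ to initial/terminal charts $V_{0},V_{l}$ multiplies the transverse composite on the right by $y_{U_{0},V_{0}}$ and on the left by $y_{V_{l},U_{k}}$, and the crucial point is that these two factors depend only on the charts and on the endpoints $x,y$, never on $\gamma$. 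Property 1 is then immediate: if $H_{\gamma}^{\UU}$ and $H_{\gamma'}^{\UU'}$ agree as germs, then for any $\VV,\VV'$ with common initial and terminal charts one has, reading through the transverse charts, $H_{\gamma}^{\VV} = y_{V_{l},U_{k}}\circ H_{\gamma}^{\UU}\circ y_{U_{0},V_{0}}$ and $H_{\gamma'}^{\VV'} = y_{V_{l},U_{k}}\circ H_{\gamma'}^{\UU'}\circ y_{U_{0},V_{0}}$, and equal inner germs force equal outer germs.

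For Property 2 I would first isolate the observation, visible from the transverse-coordinate description above, that $H_{\gamma}^{\UU}$ depends only on the chain $\UU$ and on the endpoints of $\gamma$: two leafwise paths sharing endpoints and both covered by the same chain have literally equal holonomy diffeomorphisms. Writing $\gamma\approx\gamma'$ when their holonomy germs coincide for some (equivalently, by Property 1, for every) pair of chains with matching initial and terminal charts, Property 1 shows $\approx$ is an equivalence relation on leafwise paths with fixed endpoints. Now, given a homotopy through leafwise paths with fixed endpoints $h:[0,1]\times[0,1]\rightarrow M$ from $\gamma$ to $\gamma'$, its image is compact, so a Lebesgue-number argument yields a grid subdivision $\{[t_{i-1},t_{i}]\times[s_{j-1},s_{j}]\}$ with each cell mapped into a single foliated chart $U_{ij}$. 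For each row $j$ the paths $h(\cdot,s_{j-1})$ and $h(\cdot,s_{j})$ are both covered by the common chain $\{U_{1j},\dots,U_{mj}\}$, whose initial chart contains $x$ and whose terminal chart contains $y$; by the observation above they have the same holonomy, so $h(\cdot,s_{j-1})\approx h(\cdot,s_{j})$. Transitivity of $\approx$ across the finitely many rows gives $\gamma\approx\gamma'$, which is the desired conclusion.

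The main obstacle is the transformation lemma of the second paragraph: making rigorous that the germ of $H_{\gamma}^{\UU}$ changes by the stated left and right transverse factors under an arbitrary change of chain. This rests on the common-refinement step — that any two chains covering $\gamma$ are connected by a finite sequence of insertions and deletions of charts — which in turn uses compactness of $\gamma$, together with careful bookkeeping of the cocycle identities $y_{\alpha\beta} = y_{\alpha\delta}\circ y_{\delta\beta}$ on the relevant overlaps and of the shrinking domains on which the successive germs are defined. Once this transformation law is established, both Properties 1 and 2 follow formally.
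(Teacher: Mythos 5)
The paper does not actually prove this proposition --- it is stated with a citation to \cite[p. 59]{wink} and a \qed, so there is no in-paper argument to compare against; your proof is the standard one (reading $H_{\gamma}^{\UU}$ through the transverse transition functions, using the cocycle identity and common refinements for chain-independence, and a grid subdivision of the homotopy square for invariance), and it is essentially correct. The only place where genuine work remains is the step you flag yourself: making precise that two chains with the same initial and terminal charts are connected through a common refinement on whose triple overlaps (containing points of $\gamma$) the cocycle identities apply --- this is routine bookkeeping and does not affect the validity of the argument.
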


As a consequence of item 1. in Proposition \ref{props} we can define an equivalence relation on the set of paths in leaves by declaring two such paths to be equivalent if and only if they have the same endpoints, and their associated holonomy diffeomorphisms, defined with respect to one (hence any) covering chain of foliated charts, have the same germ.  We denote by $[\gamma]$ the corresponding equivalence class of such a path $\gamma$.  Item 2. in Proposition \ref{props} guarantees that we can represent any such class by a smooth path which moreover has \emph{sitting instants} in the following sense \cite[Definition 2.1]{ptfunctor}.

\begin{defn}\label{sitting}
	Let $M$ be a manifold.  A smooth path $\gamma:[0,d]\rightarrow M$ is said to have \textbf{sitting instants} if $\gamma$ is constant on a neighbourhood of $0$ and of $d$.
\end{defn}

Sitting instants have not been employed in the construction of the holonomy groupoid classically.  Our purpose in introducing sitting instants is that it makes the concatenation of any two smooth paths again a smooth path.

Let us now briefly recall Winkelnkemper's construction of the holonomy groupoid $\HH(M,\FF)$ of a foliated manifold $(M,\FF)$ of codimension $q$.  Although the alternative construction due to J. Phillips \cite{holimper} is in some ways more systematic, and in fact served as inspiration for the theory we develop in this paper, elements of Winkelnkemper's construction are necessary in proving Theorem \ref{coincide} on the isomorphism between our diffeological constructions and Winkelnkemper's more topological ones.

As a set, $\HH(M,\FF)$ is the set of triples $(y,x,[\gamma])$, where $[\gamma]$ is the equivalence class of a path $\gamma$ contained in a leaf of $\FF$, and where $x$ and $y$ are its startpoint and endpoint respectively. The set $\HH(M,\FF)$ is clearly a groupoid, with range and source defined by sending $(y,x,[\gamma])\in\HH(M,\FF)$ to $y$ and $x$ respectively, with multiplication defined by
\[
(z,y,[\gamma])\cdot(y,x,[\eta]):=(z,x,[\gamma\eta]),
\]
where $\gamma\eta$ denotes the concatenation of the paths $\gamma$ and $\eta$, and with inversion defined by
\[
(y,x,[\gamma])\mapsto (x,y,[\gamma^{-1}]),
\]
where $\gamma^{-1}$ denotes the path $\gamma$ with reversed orientation.

A differential topology is defined on $\HH(M,\FF)$ as follows.  Fix $(y,x,[\gamma])\in\HH(M,\FF)$, and choose a representative $\gamma:[0,d]\rightarrow M$ of $[\gamma]$ and a chain $\UU = \{U_{0},\dots,U_{k}\}$ of foliated charts covering $\gamma$.  For any $t\in[0,d]$, let $\gamma^{t}:[0,d]\rightarrow M$ be the path defined by
\begin{equation}\label{gammat}
\gamma^{t}(s):=\gamma\bigg(\frac{ts}{d}\bigg),\hspace{7mm}s\in[0,d],
\end{equation}
and let $\UU_{t}$ be a subchain of $\UU$ that covers $\gamma^{t}$.  For $z$ contained in the domain of $H^{\UU}_{\gamma}$, we define a path $\gamma_{z}$ neighbouring $\gamma$ by
\begin{equation}\label{gammaz}
\gamma_{z}(t):=H^{\UU_{t}}_{\gamma^{t}}(z),\hspace{7mm}t\in[0,d],
\end{equation}
and we let $P_{z}$ be the plaque in $U_{0}$ through $z$ and let $P_{\gamma\cdot z}$ be the plaque in $U_{k}$ through $H_{\gamma}^{\UU}(z)$. We define a neighbourhood
\[
V_{(y,x,\gamma,\UU)}:=\{(b,a,[\eta_{z}])\in\HH(M,\FF):b\in P_{\gamma\cdot z},\,a\in P_{z}\}
\]
of $(y,x,[\gamma])$ in $\HH(M,\FF)$, where $\eta_{z}$ is any path of the form $s_{b}\gamma_{z} s_{a}$, where $s_{b}$ is any path contained entirely in the plaque $P_{\gamma\cdot z}$ joining $H_{\gamma}^{\UU}(z)$ to $b$, and where $s_{a}$ is any path contained entirely in the plaque $P_{z}$ joining $z$ to $a$.  Coordinates are defined on $V_{(y,x,\gamma,\UU)}$ which send $(b,a,[\eta]_{z})$ to $(x_{k}(b),x_{0}(a),y_{0}(a))$.

\begin{defn}\label{winkgpd}
	Let $(M,\FF)$ be a foliated manifold.  With the coordinate neighbourhoods defined above, $\HH(M,\FF)$ is a locally Hausdorff Lie groupoid which we will call the \textbf{Winkelnkemper-Phillips holonomy groupoid}.
\end{defn}

We include Phillips in the terminology for his contribution \cite{holimper} to the understanding of this groupoid.  We will see in Theorem \ref{hierarchy} that the Winkelnkemper-Phillips holonomy groupoid is the largest of an infinite hierarchy of holonomy groupoids defined by certain functors of diffeological categories.

\subsection{Foliated bundles and their jets}

We recall some basic definitions given by Kamber and Tondeur \cite{folbund}.  Here we let $M$ be an $n$-dimensional manifold.  If $\pi_{B}:B\rightarrow M$ is a fibre bundle, we denote by $VB:=\ker(d\pi_{B})$ the subbundle of $TB$ consisting of tangents to the fibres.

\begin{defn}\label{parcon}
	Let $\pi_{B}:B\rightarrow M$ be a fibre bundle.  A \textbf{partial connection} on $\pi_{B}$ is a smooth subbundle $H$ of $TB$ for which $H\cap VB$ is equal to the zero section.
\end{defn}

\begin{defn}\label{folbund}
A fibre bundle $\pi_{B}:B\rightarrow M$ is said to be a \textbf{foliated bundle} if it admits a partial connection $T\FF_{B}$ which is involutive.
\end{defn}

Note that if $\pi_{B}$ is a foliated bundle over $M$, then the involutivity of the distinguished subbundle $T\FF_{B}$ both integrates to a foliation $\FF_{B}$ of $B$, and induces a foliation $\FF_{M}$ of $M$ for which $T\FF_{M}$ is the involutive subbundle $(d\pi_{B})(T\FF_{B})$ of $TM$.  Kamber and Tondeur define what is meant by a connection on $\pi_{B}$ which is adapted to this foliated structure.

\begin{defn}
Let $\pi_{B}$ be a foliated bundle over $M$.  A connection $\omega\in\Omega^{1}(B;VB)$ is said to be \textbf{adapted} to $\FF_{B}$ if $T\FF_{B}\subset\ker(\omega)$.
\end{defn}

Any fibre bundle itself captures only the pointwise behaviour (that is, the 0-jets) of any of its sections.  Thus a bundle can be thought of as the bundle of 0-jets of its sections.  In order to capture more detailed information about the behaviour of sections, one must consider higher order jet bundles.  Since these ideas have not yet been explicated for \emph{foliated} bundles, we give details where necessary.  They appear in the non-foliated context in, for instance, \cite{saunders}.

We will be concerned primarily with what we call \emph{distinguished sections} of foliated bundles.  In order to define them we will need the notion of \emph{distinguished functions} \cite{holimper} in foliated manifolds. 

\begin{defn}\label{distinguishedsec}
	Let $(M,\FF)$ be a foliated manifold of codimension $q$, with associated foliated atlas $\{(U_{\alpha},\psi_{\alpha})\}_{\alpha\in\AF}$.  A \textbf{distinguished function} on $(M,\FF)$ is a smooth, $\RB^{q}$-valued function $f$ defined on an open subset $\dom(f)$ of $M$, such that for each $x\in\dom(f)$ there is a foliated chart $(U_{\alpha},x_{\alpha},y_{\alpha})$ containing $x$ with $U_{\alpha}\subset\dom(f)$ such that $f|_{U_{\alpha}} = y_{\alpha}$.
	
	Let $\pi_{B}:B\rightarrow M$ be a foliated bundle, and let $q$ be the codimension of the induced foliation $\FF_{M}$ of $M$.  Let $x\in M$.  A \textbf{distinguished section} of $\pi_{B}$ is a section $\sigma$ of $\pi_{B}$ defined on some open subset $\dom(\sigma)$ of $M$, such that there is a distinguished function $f:\dom(\sigma)\rightarrow\RB^{q}$ and a smooth function $\sigma_{f}:\range(f)\rightarrow B$ such that $\sigma = \sigma_{f}\circ f$.
\end{defn}

We will be concerned later with capturing the germinal behaviour of such distinguished sections.  For now, we consider only their behaviour up to 1-jets.

\begin{defn}
Let $\pi_{B}$ be a foliated bundle over $M$, and denote the codimension of $\FF_{M}$ by $q$.  Let $S\subset \RB^{q}$ be an open neighbourhood of the origin.  Let $b\in B$ with $\pi_{B}(b) = x$, and let $\sigma$ and $\sigma'$ be distinguished sections of $\pi_{B}$ defined in a neighbourhood of $x$, with $\sigma(x) = b = \sigma'(x)$.  We say that $\sigma$ and $\sigma'$ have the same \textbf{transverse 1-jet} at $x$ if we have $d\sigma_{x} = d\sigma'_{x}$.
\end{defn}

The relation ``$\sigma\sim \sigma'$ if and only if $\sigma$ and $\sigma'$ have the same transverse 1-jet" defines an equivalence relation on the set of distinguished local sections $\sigma$ of $\pi_{B}$ defined in a neighbourhood of $x$ with $\sigma(x) = b$.  The set of equivalence classes of such sections is denoted $J^{1}_{\t}(\pi_{B})_{b}$ (the ``t" here stands for ``transverse"), and the equivalence class of such a section $\sigma$ is denoted $j^{1}_{x}(\sigma)$ and referred to as the \emph{transverse 1-jet at $x$ of the section $s$}.  Define $J^{1}_{\t}(\pi_{B}):=\bigsqcup_{b\in B}J^{1}_{\t}(\pi_{B})_{b}$ and let $\pi_{B}^{0,1}:J^{1}_{\t}(\pi_{B})\rightarrow B$ be the map that sends $J^{1}_{\t}(\pi_{B})_{b}$ to $b$.  

The set $J^{1}_{\t}(\pi_{B})$ may be equipped with a differentiable structure as follows.  Let $F$ be the typical fibre of $B$, and let us consider a foliated chart $U_{\alpha}$ in $B$, which we may assume without loss of generality to be contained in the domain of some local trivialisation $\tau_{\alpha}:B|_{\pi_{B}(U_{\alpha})}\rightarrow \pi_{B}(U_{\alpha})\times F$.  Hence we may regard $\pi_{B}(U_{\alpha})$ as a foliated chart in $M$, with foliated coordinates $\psi^{M}_{\alpha} = (x_{\alpha},y_{\alpha})$. Denoting by $f_{\alpha}$ coordinates on $(\proj_{F}\circ\tau_{\alpha})(U_{\alpha})$, and by $\tilde{f}_{\alpha}$ the composite $f_{\alpha}\circ\proj_{F}\circ\tau_{\alpha}$, our foliated coordinates on $U_{\alpha}$ can be written
\[
\psi^{B}_{\alpha} := (x_{\alpha},y_{\alpha},f_{\alpha}):U_{\alpha}\rightarrow \RB^{\dim(M)-q}\times\RB^{q}\times\RB^{\dim(F)}.
\]
Denote points in the image of these coordinates using superscripts $(x^{\alpha},y^{\alpha},f^{\alpha})$.  If $\sigma$ is a distinguished local section of $\pi_{B}$, defined in neighbourhood of $x\in \pi_{B}(U_{\alpha})$ and with $\sigma(x) = b\in U_{\alpha}$, then for notational simplicity denote the composite
\[
f_{\alpha}\circ\proj_{F}\circ\tau_{\alpha}\circ \sigma\circ\varphi_{\alpha}^{-1}:\RB^{\dim(M)-q}\times\RB^{q}\rightarrow\RB^{\dim(F)}
\]
by $f^{j^{1}\sigma}_{\alpha}$.  Then the 1-jet $j^{1}_{x}(\sigma)$ of $\sigma$ at $x$ is determined by the $q\times\dim(F)$ coordinate derivative matrix
\[
\frac{\partial f^{j^{1}\sigma}_{\alpha}}{\partial y^{\alpha}}(y_{\alpha}(x)),
\]
which depends only on the transverse coordinates $y_{\alpha}(x)$ because $\sigma$ is distinguished.  Defining $\frac{\partial\tilde{f}_{\alpha}}{\partial y_{\alpha}}:(\pi_{B}^{0,1})^{-1}(U_{\alpha})\rightarrow \RB^{q\dim(F)}$ by
\[
\frac{\partial\tilde{f}_{\alpha}}{\partial y_{\alpha}}(j^{1}_{x}(\sigma)):=\frac{\partial f^{j^{1}\sigma}_{\alpha}}{\partial y^{\alpha}}(y_{\alpha}(x)),
\]
we thereby obtain coordinates
\begin{equation}\label{coordsjet}
\psi^{J^{1}_{\t}(\pi_{B})}_{\alpha}:=\bigg(x_{\alpha},y_{\alpha},\tilde{f}_{\alpha},\frac{\partial\tilde{f}_{\alpha}}{\partial y_{\alpha}}\bigg)
\end{equation}
on $(\pi_{B}^{0,1})^{-1}(U_{\alpha})$.

\begin{rmk}\normalfont
	Note that if $\pi_{B}$ is a flat bundle, then the codimension of the induced foliation on $M$ is zero.  In this case, any two local sections of $B$ defined about $x\in M$ which have the same value at $x$ automatically have the same transverse 1-jet, and we simply have $J^{1}_{\t}(\pi_{B}) = B$.
\end{rmk}

The following result is routine to verify.

\begin{prop}\label{1jets}
	Let $\pi_{B}$ be a foliated bundle, let $N\FF_{M}\rightarrow M$ be the normal bundle associated to the induced foliation $\FF_{M}$ of $M$ and let $N^{*}\FF_{M}\rightarrow M$ be its dual.  Then the coordinates defined above give the set $J^{1}_{\t}(\pi_{B})$ a $\dim(B)+q\dim(F)$-dimensional manifold structure, with respect to which $\pi_{J^{1}_{\t}(\pi_{B}),B}:J^{1}_{\t}(\pi_{B})\rightarrow B$ is a foliated affine bundle modelled on the vector bundle $VB\otimes\pi_{B}^{*}(N^{*}\FF_{M})$ over $B$.\qed
\end{prop}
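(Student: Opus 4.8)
The plan is to check that the coordinate patches $\psi^{J^{1}_{\t}(\pi_{B})}_{\alpha}$ of \eqref{coordsjet} overlap smoothly, to read off the dimension, and then to extract from the explicit form of the overlaps both the affine bundle structure and the foliated structure in one computation. The dimension is immediate from \eqref{coordsjet}: the block $x_{\alpha}$ contributes $\dim(M)-q$, the block $y_{\alpha}$ contributes $q$, the block $\tilde{f}_{\alpha}$ contributes $\dim(F)$, and the jet block $\partial\tilde{f}_{\alpha}/\partial y_{\alpha}$ contributes $q\dim(F)$, for a total of $\dim(M)+\dim(F)+q\dim(F) = \dim(B)+q\dim(F)$.

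The heart of the matter is the transition law for the jet block. Because $\pi_{B}$ is a \emph{foliated} bundle, I would first reduce to foliated trivialisations of $B$, in which the transverse coordinates $(y_{\alpha},f_{\alpha})$ of $\FF_{B}$ transform among themselves independently of the leafwise coordinate $x_{\alpha}$; writing the transverse base transition as $y_{\alpha}=y_{\alpha\beta}(y^{\beta})$ and the fibre transition as $f_{\alpha}=g_{\alpha\beta}(y^{\beta},f^{\beta})$, neither depends on $x^{\beta}$. For a distinguished section $\sigma$ the fibre representative $F_{\bullet}$ depends only on the transverse coordinate, so the compatibility $F_{\alpha}(y_{\alpha\beta}(y^{\beta}))=g_{\alpha\beta}(y^{\beta},F_{\beta}(y^{\beta}))$ holds; differentiating in $y^{\beta}$ by the chain rule and solving for the $\alpha$-jet block $J_{\alpha}:=\partial F_{\alpha}/\partial y^{\alpha}$ gives
\[
J_{\alpha} = \Bigl[\tfrac{\partial g_{\alpha\beta}}{\partial f^{\beta}}\,J_{\beta} + \tfrac{\partial g_{\alpha\beta}}{\partial y^{\beta}}\Bigr]\Bigl(\tfrac{\partial y_{\alpha\beta}}{\partial y^{\beta}}\Bigr)^{-1}.
\]
Smoothness of the transition is then immediate, every term being a smooth function of the base and fibre transitions and their first derivatives (matrix inversion being smooth on the invertible locus), which establishes the manifold structure.

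From this formula I would read off the two remaining claims. The transition is visibly affine in $J_{\beta}$: its linear part $J_{\beta}\mapsto \tfrac{\partial g_{\alpha\beta}}{\partial f^{\beta}}\,J_{\beta}\bigl(\tfrac{\partial y_{\alpha\beta}}{\partial y^{\beta}}\bigr)^{-1}$ is exactly the transition cocycle of $VB\otimes\pi_{B}^{*}(N^{*}\FF_{M})$, the left factor $\partial g_{\alpha\beta}/\partial f^{\beta}$ being the fibrewise Jacobian, hence the $VB$ cocycle, and the right factor $(\partial y_{\alpha\beta}/\partial y^{\beta})^{-1}$ being the cocycle of the dual transverse normal bundle $N^{*}\FF_{M}$ pulled back to $B$; the inhomogeneous term $\tfrac{\partial g_{\alpha\beta}}{\partial y^{\beta}}\bigl(\tfrac{\partial y_{\alpha\beta}}{\partial y^{\beta}}\bigr)^{-1}$ is itself a local section of that same bundle. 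To package this as an affine bundle I would use the difference map: for two jets $j,j'\in J^{1}_{\t}(\pi_{B})_{b}$ the inhomogeneous term cancels, so $J_{\alpha}-J'_{\alpha}$ transforms purely linearly and defines a well-defined element $j-j'\in VB_{b}\otimes(N^{*}\FF_{M})_{\pi_{B}(b)}$, yielding the modelling on $VB\otimes\pi_{B}^{*}(N^{*}\FF_{M})$. Finally, since $y_{\alpha\beta}$ and $g_{\alpha\beta}$ are $x^{\beta}$-independent, the entire transition of $(y_{\alpha},f_{\alpha},J_{\alpha})$ is independent of the leafwise coordinate; the distribution spanned by the $\partial/\partial x_{\alpha}$ is therefore globally well-defined and involutive, meets the vertical bundle of $\pi_{J^{1}_{\t}(\pi_{B}),B}$ only in the zero section, and projects onto $T\FF_{B}$, furnishing the involutive partial connection that makes $\pi_{J^{1}_{\t}(\pi_{B}),B}$ a foliated affine bundle whose induced foliation of the base $B$ is $\FF_{B}$.

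The main obstacle I anticipate is bookkeeping rather than conceptual: one must track the placement of the two Jacobian factors carefully to confirm that the linear part really is the $VB\otimes\pi_{B}^{*}(N^{*}\FF_{M})$ cocycle and not a transpose or a mismatched tensor factor, and one must justify at the outset the reduction to foliated trivialisations in which $g_{\alpha\beta}$ is $x^{\beta}$-independent. This latter point is precisely where the hypothesis that $\pi_{B}$ is a foliated bundle, and not merely a fibre bundle, is indispensable, and it is what simultaneously upgrades the conclusion from a merely affine bundle to a \emph{foliated} affine bundle.
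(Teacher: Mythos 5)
Your proposal is correct and follows essentially the same route as the paper's own (unpublished, commented-out) verification: one computes the overlap of the charts \eqref{coordsjet} via the chain rule, obtaining exactly your transition law for the jet block (the paper writes the factor $\partial y_{\beta\alpha}/\partial y^{\alpha}$ where you write $(\partial y_{\alpha\beta}/\partial y^{\beta})^{-1}$), and reads off the affine structure modelled on $VB\otimes\pi_{B}^{*}(N^{*}\FF_{M})$ and the foliated structure from the fact that the transition data depend only on the transverse coordinates. Your additional remarks on the difference map and on the involutive partial connection spanned by the $\partial/\partial x_{\alpha}$ are correct elaborations of the same argument.
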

\begin{defn}
	If $\pi_{B}$ is a foliated bundle, then the bundle $\pi_{B}^{0,1}\rightarrow B$ is called the \textbf{transverse 1-jet bundle of $\pi_{B}$}.
\end{defn}

\begin{rmk}\label{jets}\normalfont
	One can in essentially the same fashion as above define the \emph{transverse $k$-jet bundle} of any foliated bundle $\pi_{B}$ by looking at $k$-jets of distinguished sections.  In this case, two distinguished sections $\sigma$ and $\sigma'$ defined around $x$, with $\sigma(x) = b = \sigma'(x)$ have the same $k$-jet at $x$ if in a foliated coordinates $(x_{\alpha},y_{\alpha},f_{\alpha})$ about $b$ in $B$, one has equality of the partial derivatives
	\[
	\frac{\partial^{|I|} f_{\alpha}^{j^{1}\sigma}}{\partial (y^{\alpha})^{I}}(y_{\alpha}(x)) = \frac{\partial^{|I|} f_{\alpha}^{j^{1}\sigma'}}{\partial (y^{\alpha})^{I}}(y_{\alpha}(x))
	\]
	for all multi-indices $I$ with $|I|\leq k$.  One defines $J^{k}_{\t}(\pi_{B})$ to be the disjoint union over all $x\in M$ of $k$-jet equivalence classes of distinguished sections at $x$, and obtains coordiantes
	\[
	\bigg(x_{\alpha},y_{\alpha},\tilde{f}_{\alpha},\frac{\partial\tilde{f}_{\alpha}}{\partial y_{\alpha}},\frac{\partial^{|I| = 2}\tilde{f}_{\alpha}}{\partial y_{\alpha}^{I}},\dots,\frac{\partial^{|I|=k}\tilde{f}_{\alpha}}{\partial y_{\alpha}^{I}}\bigg)
	\]
	defined in a similar fashion to the $k = 1$ case, giving $J^{k}_{\t}(\pi_{B})$ a manifold structure.  One then obtains a tower
	\[
	J^{k}_{\t}(\pi_{B})\xrightarrow{\pi^{k-1,k}_{B}}J^{k-1}_{\t}(\pi_{B})\xrightarrow{\pi_{B}^{k-2,k-1}}\cdots\xrightarrow{\pi_{B}^{2,1}}J^{1}_{\t}(\pi_{B})\xrightarrow{\pi_{B}^{0,1}}B
	\]
	of foliated affine bundles in a manner analogous to Proposition \ref{1jets}.  We denote by $\pi^{k}_{B}:J^{k}_{\t}(\pi_{B})\rightarrow M$ and by $\pi^{l,k}_{B}:J^{k}_{\t}(\pi_{B})\rightarrow J^{l}_{\t}(\pi_{B})$ the projections, and obtain thereby a projective system of manifolds.  We denote the projective limit of these manifolds in the diffeological category (see Definition \ref{diffs}) by $J^{\infty}_{\t}(\pi_{B})$, with projection $\pi^{\infty}_{B}:J^{\infty}_{\t}(\pi_{B})\rightarrow M$.  These higher order jet bundles play a key role in the hierarchy of holonomy groupoids that we will consider in Theorem \ref{hierarchy}.
\end{rmk}

\section{Diffeology}

\subsection{Basic definitions}

Diffeology provides a convenient and easy differential structure for the space of smooth paths in leaves of a foliated manifold.  We recall the relevant definitions here, following \cite{diffeology}.

\begin{defn}
Let $X$ be a set.  A \textbf{parametrisation in $X$} is any function $\varphi:U\rightarrow X$ from an open subset $U$ of Euclidean space to $X$.  A \textbf{diffeology} on $X$ is a set $\DD$ of parametrisations, for which:
\begin{enumerate}
\item every constant parametrisation in $X$ is contained in $\DD$,
\item whenever $\varphi:U\rightarrow X$ is a parametrisation, and for every $u\in U$ there exists an open neighbourhood $V$ of $u$ in $U$ such that $\varphi |_{V}$ belongs to $\DD$, one has $\varphi\in \DD$, and
\item for every element $\varphi:U\rightarrow X$ of $\DD$, and for every open subset $V$ of Euclidean space and for every smooth function $f:V\rightarrow U$, one has $\varphi\circ f\in\DD$. 
\end{enumerate}
We call the tuple $(X,\DD)$ a \textbf{diffeological space}, although will usually omit the $\DD$ from notion.  We refer to the elements of $\DD$ as the \textbf{plots of $X$}.
\end{defn}

\begin{ex}\normalfont
Any smooth manifold $M$ is naturally a diffeological space, whose plots are all those smooth (in the usual sense) parametrisations $\varphi:U\rightarrow M$, where $U$ is an open subset of any Euclidean space.  Smooth manifolds are a full faithful subcategory of the category of diffeological spaces, which can be characterised as all those diffeological spaces that are locally diffeomorphic to Euclidean space.
\end{ex}

Let us give some immediate definitions of natural diffeologies associated to products, subspaes and limits.

\begin{defn}\label{diffs}
Let $X$ be a diffeological space.
\begin{enumerate}
\item If $Y$ is any other diffeological space, the product $X\times Y$ carries a natural diffeology, called the \textbf{product diffeology}, whose plots are all those parametrisations $\varphi:U\rightarrow X\times Y$ such that $\proj_{X}\circ\varphi$ and $\proj_{Y}\circ\varphi$ are plots of $X$ and $Y$ respectively.
\item If $Z$ is any subset of $X$, with inclusion $\iota:Z\hookrightarrow X$, then $Z$ inherits a natural diffeology, called the \textbf{subspace diffeology}, whose plots are all those parametrisations $\varphi:U\rightarrow Z$ for which the composite $\iota\circ\varphi:U\rightarrow X$ is a plot of $X$.
\item If $(X_{i})_{i\in I}$ is a family of diffeological spaces, and $(\phi_{ij}:X_{j}\rightarrow X_{i})_{i\leq j}$ is projective family of smooth maps, so that $\phi_{ii} = \id$ and $\phi_{ij} = \phi_{ik}\circ\phi_{kj}$, their \textbf{diffeological projective limit} is the set
\[
X:=\lim_{i}X_{i} = \{\vec{x}\in\Pi_{i\in I}X_{i}:\phi_{ij}(x_{j}) = x_{i}\text{ for all $i\in I$}\},
\]
equipped with the coarsest diffeology so that each projection $X\rightarrow X_{i}$ is smooth.
\end{enumerate}
\end{defn}

Smooth functions between diffeological spaces are defined naturally as those functions whose composite with any plot is again a plot.

\begin{defn}\label{smoothfunction}
Let $X$ and $Y$ be diffeological spaces.  A function $f:X\rightarrow Y$ is said to be \textbf{smooth} if, whenever $\varphi:U\rightarrow X$ is a plot of $X$, the composition $f\circ\varphi:U\rightarrow Y$ is a plot of $Y$.
\end{defn}

Using Definition \ref{smoothfunction}, we can define a natural diffeology on the set of smooth functions between any two diffeological spaces.

\begin{defn}\label{functionaldiff}
Let $X$ and $Y$ be diffeological spaces.  The set $C^{\infty}(X, Y)$ of all smooth maps from $X$ to $Y$ can be equipped with the \textbf{functional diffeology}, whose plots are by definition all those parametrisations $\varphi:U\rightarrow C^{\infty}(X, Y)$ for which the map
\[
\tilde{\varphi}:(u, x)\mapsto\varphi(u)(x)
\]
is a smooth function $U\times X\rightarrow Y$ with respect to the product diffeology on $U\times X$.
\end{defn}

Functional diffeologies are applicable more generally to spaces of \emph{locally defined} functions also.  As we have not been able to find a reference for this fact in the literature we give the details here.  Let $M$ and $N$ be smooth manifolds.  By a \emph{locally defined smooth function} we mean a smooth function $f:\dom(f)\rightarrow N$, where $\dom(f)$ is an open subset of $M$, regarded with its inherited differentiable structure.  We denote by $C^{\infty}_{\loc}(M,N)$ the set of all locally defined smooth functions.  We declare a parametrisation $\rho:U\rightarrow C^{\infty}_{\loc}(M,N)$ to be a plot if for each $u\in U$, there exists an open neighbourhood $V$ of $u$ in $U$ and an open subset $W$ of $\dom(\rho(u))$ in $M$ such that $W\subset\dom(\rho(v))$ for all $v\in V$, and such that the map $V\times W\ni(v,x)\mapsto\rho(v)(x)\in N$ is smooth.  The arguments of the proof of \cite[1.63]{diffeology} show that the set of such plots is indeed a diffeology for $C^{\infty}_{\loc}(M,N)$.

\begin{defn}\label{functionaldiff2}
	Let $M$ and $N$ be smooth manifolds.  The diffeology on $C^{\infty}_{\loc}(M,N)$ defined immediately above is called the \textbf{functional diffeology} on $C^{\infty}_{\loc}(M,N)$.
\end{defn}

The notion of a smooth function between diffeological spaces also enables the definition of pullback and pushforward diffeologies.

\begin{defn}\label{subind}
	Let $X$ and $Y$ be sets, and let $f:X\rightarrow Y$ be a function.
	\begin{enumerate}
		\item If $Y$ has a diffeology, then $X$ inherits a natural diffeology called the \textbf{pullback diffeology} from $f$, whose plots are precisely those parametrisations $\varphi:U\rightarrow X$ for which $f\circ\varphi:U\rightarrow Y$ is a plot of $Y$.  If $f$ is injective and $X$ is equipped with the pullback diffeology, $f$ is called an \textbf{induction}.
		\item If $X$ has a diffeology, then $Y$ inherits a natural diffeology called the \textbf{pushforward diffeology} from $f$, in which a parametrisation $\varphi:U\rightarrow Y$ is declared to be a plot if and only if for each $u\in U$, there exists an open neighbourhood $V$ of $u$ in $U$ such that either $\varphi|_{V}$ is constant, or there exists a plot $\tilde{\varphi}:V\rightarrow X$ such that $\varphi|_{V} = f\circ\tilde{\varphi}$.  If $f$ is surjective, and $Y$ is equipped with the pushforward diffeology, then $f$ is called a \textbf{subduction}.
	\end{enumerate}
\end{defn}

Definition \ref{smoothfunction} also allows us to introduce a notion of ``diffeological category", which will play a central role in this paper.  Categories $C$ considered in this paper have objects and morphisms denoted $\obj(C)$ and $\mor(C)$ respectively, and we will always identify $\obj(C)$ with the subset of $\mor(C)$ obtained by identifying any object with its identity morphism.  We will moreover frequently identify $C$ with $\mor(C)$.

\begin{defn}
A \textbf{diffeological category} is a small category $C$ together with a diffeology $\DD$ on $\mor(C)$ for which the source $s:\mor(C)\rightarrow\obj(C)$ and range $r:\mor(C)\rightarrow\obj(C)$ are smooth maps with respect to the subspace diffeology on $\obj(C)$, and for which the composition $\mor(C)\times_{r,s}\mor(C)\rightarrow\mor(C)$ is smooth with respect to the subspace diffeology of the product diffeology on $\mor(C)\times_{r,s}\mor(C)\subset\mor(C)\times\mor(C)$.  A diffeological category $C$ is said to be a \textbf{diffeological groupoid} if $C$ is a groupoid, and the inversion is smooth.
\end{defn}

\subsection{Tangent bundles}

In giving our new definition of the holonomy groupoid of a foliation, we will need the notions of tangent spaces and tangent bundles for diffeological spaces.  There is currently no canonical definition for these objects which is suitable for all purposes.  The notion we will use is that of \emph{internal tangent spaces and bundles} defined essentially using smooth curves, which is due originally to G. Hector \cite{hector}.  The exposition we follow here is largely derived from \cite{hector, hector2, cw}.

We begin by defining what we mean by a ``diffeological vector pseudo-bundle" over a diffeological space, which is referred to in \cite{cw} as a ``vector space over a diffeological space".

\begin{defn}\label{diffvb}
	Let $X$ be a diffeological space.  A \textbf{diffeological vector pseudo-bundle over $X$} is a diffeological space $V$, a smooth map $\pi_{V}:V\rightarrow X$, and a vector space structure on each fibre $\pi_{V}^{-1}(\{x\})$, $x\in X$, such that the addition map $V\times_{X}V\rightarrow V$, the scalar multiplication map $\RB\times V\rightarrow V$ and the zero section $X\rightarrow V$ are all smooth.
\end{defn}

Note that we use the terminology ``pseudo-bundle" instead of ``bundle" since such objects need not be locally trivial - indeed, such objects need not even have a typical fibre \cite{cw}.  Let $X$ be a diffeological space and let $x\in X$.  We denote by $\GG(X,x)$ the \emph{category of plots centred at $x$}, whose objects are plots $\rho:U\rightarrow X$, where $U$ is a connected open neighbourhood of $0$ and where $\rho(0) = x$, and whose morphisms $[\phi]_{0}\in\mor_{\GG(X,x)}(\rho:U\rightarrow X,\rho':U'\rightarrow X)$ are germs of smooth functions $\phi:U\rightarrow U'$ at zero such that $[\rho]_{0} = [\rho'\circ\phi]_{0}$.  There is then a functor $F:\GG(X,x)\rightarrow\Vect$ into the category of vector spaces and smooth maps, which sends any object $\rho:U\rightarrow X$ to the vector space $T_{0}U$, and sends any morphism $[\phi]_{0}$ in  $\mor_{\GG(X,x)}(\rho:U\rightarrow X,\rho':U'\rightarrow X)$ to the linear map $d\phi_{0}:T_{0}U\rightarrow T_{0}U'$.

\begin{defn}
	Let $X$ be a diffeological space and let $x\in X$.  The \textbf{internal tangent space} $T_{x}X$ at $x$ is the colimit of the functor $\GG(X,x)\rightarrow\Vect$.
\end{defn}

Thus, as a set, the internal tangent space $T_{x}X$ of a pointed diffeological space $(X,x)$ may be described as follows.  Let
\[
\TT_{x}X:=\bigoplus_{\rho\in\obj(\GG(X,x))}\TT_{\rho}
\]
be the direct sum over the tangent spaces $\TT_{\rho}:=T_{0}U$ of plots $\rho:U\rightarrow X$ centered at $X$, and for each such $\rho$ let $j_{\rho}:T_{\rho}\rightarrow \TT_{x}X$ be the inclusion.   Then $T_{x}X$ is the quotient of $\TT_{x}X$ by the subspace generated by all vectors of the form
\begin{equation}\label{equiv}
j_{\rho}(\xi) - j_{\rho'}\big((d\phi)_{0}(\xi)\big),\hspace{7mm}\xi\in \TT_{\rho},\,[\phi]_{0}\in\mor_{\GG(X,x)}(\rho,\rho').
\end{equation}
Given a plot $\rho:U\rightarrow X$ centered at $x$ and a tangent vector $\xi\in \TT_{\rho}$, we denote by $\rho_{*}(\xi)$ the class of the element $j_{\rho}(\xi)$ in $T_{x}X$.  Clearly then $\rho_{*}:\TT_{\rho}\rightarrow T_{x}X$ is a linear map, and if $\rho$ is a constant plot then $\rho_{*}(\xi) = 0$ for all $\xi\in T_{\rho}$.

This categorical definition yields the familiar notion of tangent vectors as arising from smooth curves.

\begin{prop}\cite[Proposition 3.3]{cw}
	Let $X$ be a diffeological space and let $x\in X$.  Then every element of $T_{x}X$ is a finite linear combination of vectors $\rho_{*}(\partial_{t})$, where $\rho:U\rightarrow X$ is a plot centred at $x$, and where $\partial_{t}\in T_{0}U$ is the standard basis vector.\qed
\end{prop}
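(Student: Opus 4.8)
The plan is to peel back the colimit construction to its explicit presentation as a quotient, and then to show that each generator can be replaced by the pushforward of $\partial_{t}$ along a one-dimensional plot, i.e.\ a curve through $x$. First I would observe that, by the very description of $T_{x}X$ recalled above, every element is represented by some element of the direct sum $\TT_{x}X = \bigoplus_{\rho}\TT_{\rho}$; since a direct sum consists precisely of the \emph{finitely} supported tuples, each such element is a finite sum $\sum_{i} j_{\rho_{i}}(\xi_{i})$ with each $\rho_{i}:U_{i}\rightarrow X$ a plot centred at $x$ and each $\xi_{i}\in\TT_{\rho_{i}} = T_{0}U_{i}$. Passing to the quotient $T_{x}X$ and using that each $\rho_{i,*}$ is linear, it therefore suffices to treat a single term: given an arbitrary plot $\rho:U\rightarrow X$ centred at $x$, with $U\subset\RB^{n}$ open, and an arbitrary $\xi\in T_{0}U$, I must exhibit $\rho_{*}(\xi)$ as a finite linear combination of vectors of the form $c_{*}(\partial_{t})$ with $c$ a plot defined on an interval about $0\in\RB$.

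Second I would decompose $\xi$ in the standard coordinate basis of $T_{0}U\cong\RB^{n}$, writing $\xi = \sum_{i=1}^{n} a_{i}\,\partial_{u_{i}}|_{0}$, so that linearity of $\rho_{*}$ gives $\rho_{*}(\xi) = \sum_{i=1}^{n} a_{i}\,\rho_{*}(\partial_{u_{i}}|_{0})$. It then remains only to realise each coordinate pushforward $\rho_{*}(\partial_{u_{i}}|_{0})$ as a curve pushforward. For this I would introduce, for each $i$, the smooth coordinate inclusion $\iota_{i}:t\mapsto t\,e_{i}$ of a small interval about $0$ into $U$, where $e_{i}$ is the $i$-th standard basis vector of $\RB^{n}$, and set $c_{i}:=\rho\circ\iota_{i}$. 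By the third diffeology axiom $c_{i}$ is again a plot, it is centred at $x$ since $c_{i}(0) = \rho(0) = x$, and its domain is a connected open neighbourhood of $0$ in $\RB$, so $c_{i}$ is a legitimate object of $\GG(X,x)$.

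The crux of the argument is then a single application of the defining relation \eqref{equiv}. The key point---and the one requiring the most care---is to track the \emph{direction} of morphisms in $\GG(X,x)$ correctly: since $[c_{i}]_{0} = [\rho\circ\iota_{i}]_{0}$, the germ $[\iota_{i}]_{0}$ is by definition an element of $\mor_{\GG(X,x)}(c_{i},\rho)$, and the functor into $\Vect$ sends it to $(d\iota_{i})_{0}:T_{0}\RB\rightarrow T_{0}U$, which carries $\partial_{t}$ to $\partial_{u_{i}}|_{0}$. Applying \eqref{equiv} with $(\rho,\rho',\phi,\xi)$ replaced by $(c_{i},\rho,\iota_{i},\partial_{t})$ shows that $j_{c_{i}}(\partial_{t}) - j_{\rho}\big((d\iota_{i})_{0}(\partial_{t})\big)$ lies in the subspace being quotiented out, so that in $T_{x}X$ we have $c_{i,*}(\partial_{t}) = \rho_{*}(\partial_{u_{i}}|_{0})$. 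Combining this with the decomposition above yields $\rho_{*}(\xi) = \sum_{i=1}^{n} a_{i}\,c_{i,*}(\partial_{t})$, and hence every element of $T_{x}X$ is a finite linear combination of curve pushforwards of $\partial_{t}$, as claimed. I expect no genuine analytic obstacle here; the only things to get right are the bookkeeping of the germ morphisms and their induced linear maps, together with the trivial remark that morphisms in $\GG(X,x)$ are germs, so one is free to shrink the interval on which $\iota_{i}$ is defined to ensure its image lies in $U$.
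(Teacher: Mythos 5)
Your proposal is correct, and it takes essentially the same route as the proof this paper relies on: the paper itself offers no argument, deferring to \cite[Proposition 3.3]{cw}, whose proof is exactly yours---represent a tangent vector by a finite sum $\sum_{i}j_{\rho_{i}}(\xi_{i})$, expand each $\xi_{i}$ in the coordinate basis of $T_{0}U_{i}$, and apply the defining relation \eqref{equiv} to the coordinate-axis inclusions $\iota_{i}$ to identify $\rho_{*}(\partial_{u_{i}}|_{0})$ with $c_{i,*}(\partial_{t})$ for the curves $c_{i}=\rho\circ\iota_{i}$. Your care with the direction of morphisms in $\GG(X,x)$ (namely that $[\iota_{i}]_{0}$ is a morphism from $c_{i}$ to $\rho$) is precisely the right bookkeeping, and no step is missing.
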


One has a natural tangent map associated to any smooth function of diffeological spaces, defined as follows.

\begin{defn}\label{pushforward}
	Let $f:X\rightarrow Y$ be a smooth map of diffeological spaces, and let $x\in X$.  The \textbf{tangent map of $f$ at $x$} is the function $df_{x}:T_{x}X\rightarrow T_{f(x)}Y$ defined by
	\[
	df_{x}\big(\rho_{*}(\partial_{t})\big):=(f\circ\rho)_{*}(\partial_{t})
	\]
	for any plot $\rho$ of $X$ centered at $x$, and extended by linearity.
\end{defn}

The construction of the tangent bundle of a diffeological space now follows in an analogous way as for manifolds.  Namely, one defines the tangent bundle to be the disjoint union of the tangent spaces and declares the tangent maps to the plots to be plots of the tangent bundle.

\begin{defn}
	Let $X$ be a diffeological space, and consider the set
	\[
	TX:=\bigsqcup_{x\in X}T_{x}X.
	\]
	For any plot $\rho:U\rightarrow X$ of $X$ and $u\in U$, let $\tau_{u}:v\mapsto v+u$ denote translation by the vector $u$ so as to obtain from $U$ a neighbourhood $\tau_{u}^{-1}U$ of $0$ and a plot $\rho\circ\tau_{u}:\tau_{u}^{-1}(U)\rightarrow X$ centred at $x$.  Define $d\rho:TU\rightarrow TX$ by the formula
	\[
	d\rho(u,\xi):=\big(\rho(u),(\rho\circ\tau_{u})_{*}(\xi)\big),\hspace{7mm}(u,\xi)\in TU = U\times\RB^{n}.
	\]
	The \textbf{Hector diffeology} on $TX$ is the diffeology generated by the $d\rho:TU\rightarrow TX$ defined for all plots $\rho:U\rightarrow X$.  The \textbf{dvs diffeology} on $TX$ is the smallest diffeology containing the Hector diffeology for which $\pi_{TX}:TX\rightarrow X$ is a diffeological vector pseudo-bundle.
\end{defn}

The reason for using the dvs diffeology on the tangent bundle $TX$ of $X$ is that the fibrewise addition and scalar multiplication operations \emph{need not} be smooth for the Hector diffeology \cite{cw}.  This is essentially due to the fact that plots of the form $d\rho:TU\rightarrow TX$ need not cover $TX$.  Plots $\varphi:U\rightarrow TX$ in the dvs diffeology are characterised by the property that for each $u\in U$, there is a neighbourhood $V$ of $u$ in $U$ and a plot $x:V\rightarrow X$, a finite family of plots $\rho_{i}:V\rightarrow TX$ from the Hector diffeology such that $\pi_{TX}\circ\rho_{i} = x$, and a finite family of smooth maps $r_{i}:V\rightarrow\RB$ such that
\[
\varphi(v) = \sum_{i=1}^{n}r_{i}(v)\rho_{i}(v)
\]
for all $v\in V$.

Finally, we have the obvious extension of Definition \ref{pushforward} to full tangent bundles.

\begin{defn}\label{differential}
	Let $f:X\rightarrow Y$ be a smooth map of diffeological spaces.  Then the map $df:TX\rightarrow TY$ defined by
	\[
	df(x,\rho_{*}(\partial_{t})):=(f(x),(f\circ\rho)_{*}(\partial_{t}))
	\]
	for any plot $\rho:U\rightarrow X$ centred at $x$, and extended by linearity in each fibre, is called the \textbf{pushforward of $f$}.
\end{defn}

By \cite[Proposition 4.8]{cw}, if $f:X\rightarrow Y$ is a smooth map of diffeological spaces, then $df:TX\rightarrow TY$ is a smooth map of their tangent bundles. 

\subsection{Diffeological groups, principal bundles, and partial connections}

A key structure in the theory developed in this paper is that of a diffeological fibre bundle (referred to in \cite{diffeology} as a ``diffeological fibration").  In order to define it, we follow \cite[Chapter 8]{diffeology} in first recalling the structure groupoid of a diffeological surjection, which is instrumental in the definition of our holonomy groupoids.

Let $\pi_{B}:B\rightarrow X$ be a smooth surjection of diffeological spaces.  For each pair $x,y\in X$, denote by $\Diff(B_{x},B_{y})$ the set of (diffeological) diffeomorphisms from the fibre $B_{x}$ onto the fibre $B_{y}$.  Denote by $\Aut(\pi_{B})$ the category for which
\[
\obj(\Aut(\pi_{B})) = X
\]
and
\[
\mor_{\Aut(\pi_{B})}(x,y) = \Diff(B_{x},B_{y}),\hspace{7mm}(x,y)\in X\times X.
\]
Composition in this category is simply composition of diffeomorphisms, and since diffeomorphisms are invertible $\Aut(\pi_{B})$ is of course a groupoid.  As is usual, we will identify the groupoid $\Aut(\pi_{B})$ with its set of morphisms, where each $x$ in the unit space $X$ can be identified with the identity morphism $\id_{B_{x}}\in\Diff(B_{x},B_{x})$.  Let us denote the range and source of $\Aut(\pi_{B})$ by $r$ and $s$ respectively.

The category $\Aut(\pi_{B})$ may be equipped with a diffeology by declaring a parametrisation $\varphi:U\rightarrow\Aut(\pi)$ to be a plot if and only if the maps
\begin{equation}\label{f1}
\varphi_{s}:U\times_{s\circ\varphi,\pi_{B}}B\ni(u,b)\mapsto\varphi(u)(b)\in B,
\end{equation}
\begin{equation}\label{f2}
\varphi_{r}:U\times_{r\circ\varphi,\pi_{B}}B\ni(u,b)\mapsto\varphi(u)^{-1}(b)\in B,
\end{equation}
and
\begin{equation}\label{f3}
(r,s)\circ\varphi:U\rightarrow X\times X
\end{equation}
are smooth.  This is the \emph{functional diffeology} on $\Aut(\pi_{B})$ - it is the smallest diffeology under which $\Aut(\pi_{B})$ is a diffeological groupoid and for which the evaluation map $\ev:\Aut(\pi_{B})\times_{s,\pi_{B}}B\ni(f,b)\mapsto f(b)\in B$ is smooth.

\begin{defn}\label{structuregroupoid}
	Let $\pi_{B}:B\rightarrow X$ be a smooth surjection of diffeological spaces.  The groupoid $\Aut(\pi_{B})$, equipped with the functional diffeology, is called the \textbf{structure groupoid of $\pi_{B}$}.
\end{defn}

Diffeological fibre bundles are those diffeological surjections whose structure groupoids are sufficiently well-behaved.

\begin{defn}
	A smooth surjection $\pi_{B}:B\rightarrow X$ is called a \textbf{diffeological fibre bundle} if the characteristic map $(r,s):\Aut(\pi_{B})\rightarrow X\times X$ of its structure groupoid is a subduction.
\end{defn}

That the characteristic map $(r,s):\Aut(\pi_{B})\rightarrow X\times X$ of the structure groupoid of a diffeological surjection is a subduction means that all fibres of $\pi_{B}$ are diffeomorphic.  Thus the notion of ``typical fibre" makes sense for a diffeological fibre bundle, but need not for a general diffeological surjection.

An important subclass of diffeological fibre bundles are diffeological principal bundles. Such objects have \emph{diffeological groups} as their fibres.

\begin{defn}
	A \textbf{diffeological group} is a group $G$ that is equipped with a diffeology for which the multiplication and inversion maps are smooth.
\end{defn}

Diffeological principal bundles associated to diffeological groups are now defined in terms of inductions (see Definition \ref{subind}).  We follow \cite[Sec. 8.11]{diffeology} for our definition, noting that the slightly more restrictive class of \emph{$D$-numerable diffeological principal bundles} has been studied and classified in \cite{magnot3,cw2}.

\begin{defn}\label{principal}
	Let $G$ be a diffeological group.  A smooth right action $R$ of $G$ on a diffeological space $P$ is said to be \textbf{principal} if the map
	\[
	P\times G\ni(p,g)\mapsto(p,R_{g}(p))\in P\times P
	\]
	is an induction.  Given such an action, equip $P/G$ with the pushfoward diffeology from the quotient $q:P\rightarrow P/G$, and declare a diffeological surjection $\pi_{P}:P\rightarrow X$ to be a \textbf{diffeological principal $G$-bundle} if there exists a diffeomorphism $f:X\rightarrow P/G$ such that $f\circ\pi_{P} = q$.
\end{defn}

If $\pi_{P}:P\rightarrow X$ is a diffeological principal $G$-bundle, then the action of $G$ on $P$ is automatically free.  Moreover the surjection $\pi_{P}:P\rightarrow X$ is a diffeological fibre bundle with typical fibre $G$ \cite[p. 243]{diffeology}.

For the construction of holonomy groupoids, we will be concerned with \emph{partial connections} of such principal bundles, which are defined in an analogous manner to Definition \ref{parcon}.  This definition does not currently appear in the literature.

\begin{defn}\label{conndiff}
	Let $\pi_{B}:B\rightarrow X$ be a diffeological fibre bundle.  A \textbf{partial connection} for $\pi_{B}$ is a subbundle $H$ of $TB$ such that, for each $b\in B$, the differential $d\pi_{B}$ restricts to a linear injection $H_{b}\rightarrow T_{\pi_{B}(b)}X$.
	
	If in particular $G$ is a diffeological group and $\pi_{P}:P\rightarrow X$ is a diffeological principal $G$-bundle, with principal action $R:P\times G\rightarrow P$, a partial connection $H$ on $\pi_{P}$ is called a \textbf{principal partial connection} if it is equivariant under the action of $G$.  That is, denoting by $dR_{g}:TP\rightarrow TP$ the differential of the action of $g\in G$ on $P$, $H$ is said to be principal if $dR_{g}(H_{p})= H_{p\cdot g}$.
\end{defn}

``Full" connections of course could be defined in a similar way, but will not be required for our constructions in this paper.

\subsection{The diffeological Moore path category}

For the entirety of this subsection we denote by $X$ a diffeological space, and by $\RB_{+} = [0,\infty)$ the set of non-negative real numbers, equipped with the subspace diffeology from $\RB$ (that is, we consider $\RB_{+}$ as a manifold with boundary).  We extend the Moore path category from algebraic topology \cite[Ch. III, Sec. 2]{whitehead} to the setting of diffeological spaces.

\begin{defn}
	The \textbf{Moore path category of $X$} is the small category $\PP(X)$ with
	\begin{enumerate}
		\item object set $\obj(\PP(X)) = X$,
		\item morphism set $\mor(\PP(X))$ consisting of pairs $(\gamma,d)$, where $d\in\RB_{+}$ is called the \textbf{duration}, and $\gamma:[0,\infty)\rightarrow X$ is a smooth map \textbf{with sitting instants}, in the sense that it is constant in a neighbourhood of $0$ and of $[d,\infty)$.  The source of $(\gamma, d)$ is $\gamma(0)$ and its range is $\gamma(d)$.  Composition of morphisms is defined by $(\gamma_{1},d_{1})\circ(\gamma_{2},d_{2}):=(\gamma_{1}\gamma_{2}, d_{1}+d_{2})$, where $\gamma_{1}\gamma_{2}$ is the path
		\[
		\gamma_{1}\gamma_{2}(t) = \begin{cases}
		\gamma_{2}(t),\,&\text{for $t\leq d_{2}$}\\
		\gamma_{1}(t-r_{2}),\,&\text{for $d_{2}\leq t<\infty$}
		\end{cases}.
		\]
		To any $x\in X = \obj(\PP(X))$, the corresponding identity morphism is $(\gamma_{x},0)\in\mor(\PP(X))$, where $\gamma_{x}$ is the constant map sending $\RB_{+}$ to $x$.
	\end{enumerate}
\end{defn}

Note our departure from the usual Moore path category in requiring that our paths have sitting instants (see Definition \ref{sitting}) - this is so that concatenation of smooth paths remains smooth.  The inclusion of durations in the definition of the Moore path category means that the concatenation of paths therein is associative on-the-nose, in contrast with the space one would obtain by considering only paths defined on $[0,1]$, for instance.  Since we are interested in doing calculus with the leafwise path category, we must equip it with a differential structure.  Diffeology provides the easiest way of doing this.

\begin{prop}
	Equip $C^{\infty}(\RB_{+},X)\times\RB_{+}$ with the product diffeology arising from the functional diffeology on $C^{\infty}(\RB_{+},X)$ and the standard diffeology on $\RB_{+}$.  Then $\PP(X)\subset C^{\infty}(\RB_{+},X)\times\RB_{+}$ inherits the subspace diffeology to become a diffeological category.
\end{prop}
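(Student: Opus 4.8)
The plan is to unwind the subspace diffeology on $\PP(X)$ into concrete terms and then verify directly the three structural smoothness conditions of a diffeological category. By the definitions of the product and functional diffeologies, a parametrisation $\varphi:U\rightarrow\PP(X)$, written $\varphi(u) = (\gamma_{u},d_{u})$, is a plot if and only if the map $\Gamma:U\times\RB_{+}\rightarrow X$, $\Gamma(u,t):=\gamma_{u}(t)$, is smooth and the map $d:U\rightarrow\RB_{+}$, $d(u):=d_{u}$, is smooth; each $\gamma_{u}$ retains its sitting instants, but with widths that may vary with $u$. Since $\PP(X)$ is a category rather than a groupoid, there is no inversion to check, so it remains only to verify that the source $s$, the range $r$, and the concatenation are smooth. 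First I would record the identification of $\obj(\PP(X))$ with $X$: the map $X\rightarrow\mor(\PP(X))$, $x\mapsto(\gamma_{x},0)$, is readily checked to be a diffeomorphism onto $\obj(\PP(X))$ with its subspace diffeology, so that $s$ and $r$ may be regarded as maps into $X$.

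Smoothness of source and range is then immediate from this characterisation. Given a plot $\varphi(u)=(\gamma_{u},d_{u})$ with associated pair $(\Gamma,d)$, the composite $s\circ\varphi$ sends $u$ to the constant path at $\gamma_{u}(0)=\Gamma(u,0)$, and $u\mapsto\Gamma(u,0)$ is smooth as the restriction of $\Gamma$ to $U\times\{0\}$. Likewise $r\circ\varphi$ sends $u$ to the constant path at $\gamma_{u}(d_{u})=\Gamma(u,d_{u})$, and $u\mapsto\Gamma(u,d_{u})=\Gamma\circ(\id_{U},d)$ is smooth as a composite of smooth maps. Hence both $s$ and $r$ are smooth.

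The substance of the proof is the smoothness of concatenation $\mor(\PP(X))\times_{r,s}\mor(\PP(X))\rightarrow\mor(\PP(X))$. A plot of the fibre product is a pair of plots $u\mapsto(\gamma^{1}_{u},d^{1}_{u})$ and $u\mapsto(\gamma^{2}_{u},d^{2}_{u})$ satisfying $\gamma^{1}_{u}(0)=\gamma^{2}_{u}(d^{2}_{u})$ for all $u$, and its image under concatenation is $u\mapsto(\gamma^{1}_{u}\gamma^{2}_{u},\,d^{1}_{u}+d^{2}_{u})$. The duration component $u\mapsto d^{1}_{u}+d^{2}_{u}$ is manifestly smooth, so everything reduces to showing that $\Psi:U\times\RB_{+}\rightarrow X$, given by $\Psi(u,t)=\gamma^{2}_{u}(t)$ for $t\le d^{2}_{u}$ and $\Psi(u,t)=\gamma^{1}_{u}(t-d^{2}_{u})$ for $t\ge d^{2}_{u}$, is smooth. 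Since smoothness in diffeology is a local condition, I would localise around a point $(u_{0},t_{0})$. Away from the seam, i.e. when $t_{0}<d^{2}_{u_{0}}$ or $t_{0}>d^{2}_{u_{0}}$, the map $\Psi$ coincides near $(u_{0},t_{0})$ with $(u,t)\mapsto\gamma^{2}_{u}(t)$ or with $(u,t)\mapsto\gamma^{1}_{u}(t-d^{2}_{u})$ respectively, each a composite of smooth maps.

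The only delicate case, and the main obstacle, is smoothness across the moving seam $t=d^{2}_{u}$. I would handle it by straightening the seam with the diffeomorphism $(u,s)\mapsto(u,s+d^{2}_{u})$, which turns $\Psi$ near the seam into the gluing along the fixed hyperplane $\{s=0\}$ of $(u,s)\mapsto\gamma^{2}_{u}(s+d^{2}_{u})$ on $\{s\le 0\}$ and $(u,s)\mapsto\gamma^{1}_{u}(s)$ on $\{s\ge 0\}$. Here the sitting instants do the essential work: $\gamma^{2}_{u}$ is constant on a neighbourhood of $d^{2}_{u}$ and $\gamma^{1}_{u}$ is constant on a neighbourhood of $0$, so for $s$ near $0$ both branches are independent of $s$ and equal the common value $\gamma^{1}_{u}(0)=\gamma^{2}_{u}(d^{2}_{u})$, a smooth function of $u$ alone. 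The point requiring genuine care is that the widths of these sitting neighbourhoods may shrink as $u$ varies; this is precisely why the sitting-instant hypothesis is imposed, and it is most cleanly exploited by observing that in the straightened coordinates the two glued branches have contact of infinite order along $\{s=0\}$, all $s$-derivatives vanishing there by local constancy while the zeroth-order values agree by the composability constraint, so that the glued map is smooth. The degenerate case $d^{2}_{u}=0$, in which $\gamma^{2}_{u}$ is globally constant, is routine. With $s$, $r$ and concatenation all smooth, $\PP(X)$ is a diffeological category.
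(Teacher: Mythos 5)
Your overall route is the same as the paper's: characterise plots of $\PP(X)$ as pairs consisting of a jointly smooth family $\Gamma:U\times\RB_{+}\rightarrow X$ and a smooth duration function, deduce smoothness of $s$ and $r$ from evaluation (the paper phrases this via smoothness of $\ev_{0}$ in the functional diffeology, you via restriction of $\Gamma$ to $t=0$ and $t=d(u)$ — these are the same observation), and then verify smoothness of concatenation by writing out the piecewise formula for $(m\circ\varphi)(u)(t)$. Where you differ is that the paper stops there: it simply asserts that the piecewise formula defines a smooth map $U\times\RB_{+}\rightarrow X$. You go further and isolate the genuine difficulty, namely that the widths of the sitting instants of $\gamma^{1}_{u}$ and $\gamma^{2}_{u}$ need not be bounded below locally in $u$ (e.g.\ a family whose path at $u=u_{0}$ is globally constant but whose nearby paths sit only on intervals of length $|u-u_{0}|$), so that one cannot find a uniform open band around the moving seam $t=d^{2}_{u}$ on which $\Psi$ depends on $u$ alone. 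Identifying this is a real improvement on the paper's one-line assertion.

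The gap is in how you close that case. Your resolution — straighten the seam and argue that the two branches have contact of infinite order along $\{s=0\}$, so the glued map is smooth — is a statement about partial derivatives, and it does prove the claim when $X$ is a smooth manifold (matching one-sided jets along a hypersurface yields a $C^{\infty}$ function, by the inductive argument you sketch). But the proposition is stated for an arbitrary diffeological space $X$, and it is applied later at exactly that level of generality (e.g.\ to $\PP(\DS_{\g}(M,\FF))$ and $\PP_{H^{\g}_{B}}(\DS_{\g}(\pi_{B}))$, whose targets are not manifolds). For a general diffeology, being a plot is not detected by derivatives: a parametrisation that restricts to a plot on each of two closed half-regions and ``agrees to infinite order'' along the interface need not be a plot, since the locality axiom only lets you glue over an \emph{open} cover, and the only open sets on which you control $\Psi$ are $\{t<d^{2}_{u}\}$, $\{t>d^{2}_{u}\}$, and a band about the seam whose width you cannot control. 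So the infinite-order-contact step does not transfer, and the argument as written establishes the proposition only for manifold targets. To finish in the stated generality one needs a diffeology-native device — for instance restricting to (or reparametrising onto) paths whose sitting instants have a locally uniform minimal width, so that $\Psi$ coincides with the plot $(u,t)\mapsto\Gamma^{2}(u,d^{2}_{u})$ on an honest open neighbourhood of the seam and locality applies. To be fair, the paper's own proof elides this point entirely, so your write-up is the more honest of the two; but the step you flag as ``most cleanly exploited'' by derivative matching is precisely the step that is not available for a general $X$.
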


\begin{proof}
	Let us first show that the source $s:\mor(\PP(X))\rightarrow\obj(\PP(X))$ is smooth.  The proof for the range is similar.  Suppressing the inclusions of $\obj(\PP(X))$ into $\mor(\PP(X))$ and of $\PP(X)$ into $C^{\infty}(\RB_{+};X)\times\RB_{+}$ for simplicity, we must show that for any plot $\varphi:U\rightarrow\PP(X)$, each of the composites
	\[
	\proj_{C^{\infty}(\RB_{+};X)}\circ s\circ\varphi:U\rightarrow C^{\infty}(\RB_{+};X),\hspace{7mm}\proj_{\RB_{+}}\circ s\circ\varphi:U\rightarrow\RB_{+}
	\]
	are plots.  We easily see that $\proj_{\RB_{+}}\circ s\circ\varphi$ is the constant map $U\ni u\mapsto 0\in\RB_{+}$, so is a plot by definition.  On the other hand, for $u\in U$ we compute
	\[
	(\proj_{C^{\infty}(\RB_{+};X)}\circ s\circ\varphi)(u) = \varphi(u)(0) = (\ev_{0}\circ\proj_{C^{\infty}(\RB_{+};X)}\circ\varphi)(u),
	\]
	where $\ev_{0}:C^{\infty}(\RB_{+};X)\rightarrow X$ is the evaluation map $f\mapsto f(x)$.  Since the evaluation maps are always smooth in the functional diffeology \cite[Sec. 1.57]{diffeology}, we conclude that $\proj_{C^{\infty}(\RB_{+};X)}\circ s\circ\varphi$ is indeed a plot, and therefore $s:\mor(\PP(X))\rightarrow\obj(\PP(X))$ is smooth.
	
	It remains only to show that the composition of morphisms $m:\mor(\PP(X))\times_{r,s}\mor(\PP(X))\rightarrow\mor(\PP(X))$ is smooth.  Let therefore $\varphi:U\rightarrow \mor(\PP(X))\times_{r,s}\mor(\PP(X))$ be a plot, so that composites with the projections $\proj_{1}\circ\varphi:U\rightarrow\mor(\PP(X))$ and $\proj_{2}\circ\varphi:U\rightarrow\mor(\PP(X))$ onto the first and second factors respectively are plots of $\mor(\PP(X))$.  We must show that $m\circ\varphi:U\rightarrow\mor(\PP(X))$ is a plot.  For ease of notation, let us write
	\[
	d_{i}(u):=(\proj_{\RB_{+}}\circ\proj_{i}\circ\varphi)(u)
	\]
	for the durations of $(\proj_{i}\circ\varphi)(u)\in\mor(\PP(X))$, $i = 1,2$.  We then compute
	\[
	(m\circ\varphi)(u)(t) = \begin{cases}
	(\proj_{2}\circ\varphi)(u)(t),\,&\text{for $0\leq t\leq d_{2}(u)$}\\
	(\proj_{1}\circ\varphi)(u)(t - d_{2}(u)),\,&\text{for $d_{2}(u)\leq t<\infty$}.
	\end{cases}
	\]
	Since the $\proj_{i}\circ\varphi$ are plots, it follows that the function $\widetilde{m\circ\varphi}:U\times\RB_{+}\ni(u,t)\mapsto(m\circ\varphi)(u)(t)\in X$ is smooth, hence that $m\circ\varphi$ is a plot of $\mor(\PP(X))$, giving smoothness of $m$ as claimed.
\end{proof}

Of course, paths can be ``inverted" by simply reversing their orientation.  While this operation does not define a genuine inversion operation on the Moore path category, it is smooth with respect to the diffeology thereon and, as we will see, descends to give inverses in diffeological quotients of geometric interest.

\begin{prop}\label{inverses}
	For a pair $(\gamma,d)\in\PP(X)$, define a new pair $(\gamma^{-1},d)\in\PP(X)$ by
	\[
	\gamma^{-1}(t):=\begin{cases}
	\gamma(d-t)\,&\text{for $0\leq t\leq d$}\\
	\gamma(0)\,&\text{for $t\geq d$}
	\end{cases}.
	\]
	Then the map $\iota:(\gamma,d)\mapsto(\gamma^{-1},d)$ is smooth.
\end{prop}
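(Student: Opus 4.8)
The plan is to verify smoothness of $\iota$ directly from the definition of a smooth map of diffeological spaces. Since $\PP(X)$ carries the subspace diffeology inherited from $C^{\infty}(\RB_{+},X)\times\RB_{+}$, and the latter carries the product of the functional diffeology with the standard diffeology, it suffices to fix an arbitrary plot $\varphi:U\rightarrow\PP(X)$ and to show that $\iota\circ\varphi$ is again a plot. Writing $\varphi(u) = (\gamma_{u},d(u))$, this amounts to two conditions: first, that $u\mapsto d(u)$ is a plot of $\RB_{+}$, and second, by Definition \ref{functionaldiff}, that the associated map $\Theta:U\times\RB_{+}\rightarrow X$, $\Theta(u,t):=\gamma_{u}^{-1}(t)$, is smooth with respect to the product diffeology.

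The first condition is immediate, since $\iota$ preserves durations, so that the duration component of $\iota\circ\varphi$ coincides with that of $\varphi$, which is a plot by hypothesis. For the second, I would write $\tilde{\varphi}:U\times\RB_{+}\rightarrow X$, $\tilde{\varphi}(u,t):=\gamma_{u}(t)$, for the smooth map associated to $\varphi$, and unwind the definition of $\gamma_{u}^{-1}$. Recalling that $\gamma_{u}$ is constant equal to $\gamma_{u}(0)$ on $[d(u),\infty)$, the two branches defining $\gamma_{u}^{-1}$ combine into the single formula
\[
\Theta(u,t) = \tilde{\varphi}\big(u,\max(d(u)-t,0)\big),\hspace{7mm}(u,t)\in U\times\RB_{+}.
\]
Away from the ``turning locus" $\{(u,t):t = d(u)\}$ smoothness of $\Theta$ is then clear: on the open set where $t<d(u)$ the reparametrisation $(u,t)\mapsto(u,d(u)-t)$ is smooth and $\RB_{+}$-valued, so $\Theta$ agrees there with its composite with $\tilde{\varphi}$, while on the open set where $t>d(u)$ we have $\Theta(u,t) = \tilde{\varphi}(u,0)$, again a composite of $\tilde{\varphi}$ with a smooth map. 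By the locality axiom for diffeologies it then remains only to produce, for each $u_{0}$, a smooth local description of $\Theta$ across $t = d(u_{0})$.

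This last step is where the sitting instants are essential, and I expect it to be the main obstacle. The turning point $t = d(u)$ is exactly where the second argument $\max(d(u)-t,0)$ vanishes, so the relevant hypothesis is that $\gamma_{u}$ is constant on a neighbourhood of $0$. The cleanest way to package this is to extend $\tilde{\varphi}$ to a map $\bar{\varphi}:U\times\RB\rightarrow X$ by declaring $\bar{\varphi}(u,s):=\tilde{\varphi}(u,0)$ for $s\leq 0$, so that $\Theta(u,t) = \bar{\varphi}(u,d(u)-t)$ exhibits $\Theta$ as a composite of smooth maps. The delicate point, which I anticipate to be the crux, is the smoothness of $\bar{\varphi}$ across the seam $s = 0$: here one must use that each slice $\tilde{\varphi}(u,\cdot)$ is \emph{genuinely} constant on a neighbourhood of $0$, so that prepending the constant value $\tilde{\varphi}(u,0)$ introduces no singularity and the constant left branch glues smoothly onto $\tilde{\varphi}$ at the initial endpoint. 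This is the same mechanism—applied at the initial endpoint rather than the terminal one—by which smoothness of concatenation was established in the preceding proposition, where the matching of the two pieces at the seam $t = d_{2}(u)$ was handled in exactly this way.
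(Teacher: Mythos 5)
Your proposal is correct and follows essentially the same route as the paper: precompose $\iota$ with an arbitrary plot $\varphi:U\rightarrow\PP(X)$ and verify that the explicit piecewise formula $(u,t)\mapsto\varphi(u)(\tilde{d}(u)-t)$ for $t\leq\tilde{d}(u)$, $\varphi(u)(0)$ for $t\geq\tilde{d}(u)$, defines a smooth map $U\times\RB_{+}\rightarrow X$. The paper simply asserts smoothness of this formula, whereas you additionally isolate and justify the only nontrivial point, namely smoothness across the seam $t=\tilde{d}(u)$ via the sitting instant of each $\varphi(u)$ at $0$; this is a welcome amplification rather than a departure.
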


\begin{proof}
	Let $\varphi:U\rightarrow\PP(X)$ is a plot, so that the map
	\[
	U\times[0,\infty)\ni(u,t)\mapsto\varphi(u)(t)\in X
	\]
	is smooth, and let $\tilde{d}:U\rightarrow[0,\infty)$ be the smooth function which assigns to each $u\in U$ the duration of the path $\varphi(u)$.  Then
	\[
	\iota\circ\varphi(u)(t) = \begin{cases}
	\varphi(u)(\tilde{d}(u) - t)\,&\text{for $0\leq t\leq \tilde{d}(u)$}\\
	\varphi(u)(0)\,&\text{for $\tilde{d}(u)\leq t < \infty$}
	\end{cases}
	\]
	is also smooth.  Hence $\iota:\PP(X)\rightarrow\PP(X)$ is a smooth map.
\end{proof}

Since we will be working with foliations, we will be primarily interested in paths whose tangent vectors all lie in some distinguished subbundle of the tangent bundle $TX$.

\begin{defn}\label{Hpath}
	Let $H$ be a subbundle of $TX$.  The \textbf{$H$-path category} is the diffeological subcategory $\PP_{H}(X)$ of $\PP(X)$ consisting of those $(\gamma,d)$ for which $\im(d\gamma)\subset H$ (see Definition \ref{differential}).
\end{defn}

An immediate example of an $H$-path category that will be used constantly in this article is the leafwise path category of a foliation.

\begin{defn}
	Let $(M,\FF)$ be a foliated manifold.  The \textbf{leafwise path category} is the diffeological subcategory $\PP_{T\FF}(M)$ of $\PP(M)$.
\end{defn}

The following definition is inspired by the systematic description of transport functors given in \cite{ptfunctor}.

\begin{defn}\label{transportfunctor}
	Let $X$ be a diffeological space, and $\pi_{B}:B\rightarrow X$ a diffeological fibre bundle.  A smooth functor $T:\PP(X)\rightarrow \Aut(\pi_{B})$ is said to be a \textbf{transport functor} if there exists a smooth lifting map
	\[
	L:\PP(X)\times_{s,\pi_{B}}B\rightarrow\PP(B)
	\]
	such that $T$ can be written as the composite
	\[
	T(\gamma,d)(b):=r\circ L\big((\gamma,d),b\big),\hspace{7mm}\big((\gamma,d),b\big)\in\PP(X)\times_{s,\pi_{B}}B.
	\]
	If in particular $T:\PP_{T\FF}(M)\rightarrow\Aut(\pi_{B})$ for some foliated manifold $(M,\FF)$, we refer to $T$ as a \textbf{leafwise transport functor}.
\end{defn}

Quotienting a path space by ``the kernel" of a transport functor yields a diffeological groupoid.

\begin{prop}
	Let $X$ be a diffeological space and let $\pi_{B}:B\rightarrow X$ be a diffeological fibre bundle.  Let $T:\PP(X)\rightarrow\Aut(\pi_{B})$ be a transport functor.  The quotient $\HH(T):=\PP(X)/\sim_{T}$ of $\PP(X)$ by the relation $\sim_{T}$, where $(\gamma_{1},d_{1})\sim_{T}(\gamma_{2},d_{2})$ if and only if $T(\gamma_{1},d_{1}) = T(\gamma_{2},d_{2})$, inherits from $\PP(X)$ the structure of a diffeological groupoid, whose inversion is induced by the smooth map $\iota:\PP(X)\rightarrow\PP(X)$ of Proposition \ref{inverses}.
\end{prop}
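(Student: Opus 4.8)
The plan is to recognise $\sim_{T}$ as the kernel congruence of the functor $T$, to equip $\HH(T)$ with the pushforward diffeology of Definition \ref{subind} from the quotient map $q:\PP(X)\rightarrow\HH(T)$, and then to transport every structure map of $\PP(X)$ across the subduction $q$. First I would check that $\sim_{T}$ is a congruence. It is visibly an equivalence relation. Because $T$ fixes objects and each value $T(\gamma,d)$ lives in a single summand $\Diff(B_{x},B_{y})$ of $\mor(\Aut(\pi_{B}))=\bigsqcup_{x,y}\Diff(B_{x},B_{y})$, the equality $T(\gamma_{1},d_{1})=T(\gamma_{2},d_{2})$ forces $s(\gamma_{1},d_{1})=s(\gamma_{2},d_{2})$ and $r(\gamma_{1},d_{1})=r(\gamma_{2},d_{2})$; in particular $\sim_{T}$ is trivial on constant paths, so $\obj(\HH(T))$ is canonically $X$. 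Compatibility with composition is immediate from functoriality: if $a\sim_{T}a'$ and $b\sim_{T}b'$ are composable then $T(a\circ b)=T(a)T(b)=T(a')T(b')=T(a'\circ b')$. Hence the categorical structure of $\PP(X)$ descends and $q$ becomes an identity-on-objects functor.

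With the pushforward diffeology on $\mor(\HH(T))$, the map $q$ is a subduction, and smoothness of the descended source and range is automatic from the universal property of subductions, since $\bar{s}\circ q=s$ and $\bar{r}\circ q=r$ are smooth. For composition the only point requiring care is that $q\times q$ restricts to a subduction $\PP(X)\times_{r,s}\PP(X)\rightarrow\HH(T)\times_{\bar r,\bar s}\HH(T)$. The composability conditions match exactly: since $\bar r\circ q=r$ and $\bar s\circ q=s$ with objects injecting into $X$, two classes are composable in $\HH(T)$ iff any (hence every) pair of representatives is composable in $\PP(X)$. Given a plot of the target fibre product with components $\psi_{1},\psi_{2}$, local lifts $\chi_{i}$ of $\psi_{i}$ through $q$ satisfy $r\circ\chi_{1}=\bar r\circ\psi_{1}=\bar s\circ\psi_{2}=s\circ\chi_{2}$, so $(\chi_{1},\chi_{2})$ is a plot of $\PP(X)\times_{r,s}\PP(X)$ over $\psi$; thus $q\times q$ is a subduction and $\bar m\circ(q\times q)=q\circ m$ gives smoothness of $\bar m$.

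The substantive step is to show that $\HH(T)$ is a \emph{groupoid} and that $\iota$ of Proposition \ref{inverses} induces its inversion. Everything reduces to the reversal identity
\[
T(\gamma^{-1},d)=T(\gamma,d)^{-1}.
\]
I would establish this from the lifting map $L$, not from functoriality alone: if $\tilde{\gamma}=L((\gamma,d),b)$ is the lift of $\gamma$ starting at $b$ with endpoint $b'=T(\gamma,d)(b)$, then its orientation reversal $\tilde{\gamma}^{-1}$ is a path over $\gamma^{-1}$ starting at $b'$ whose tangents still lie in the lifting distribution, since reversal only flips the sign of a tangent vector and the relevant subbundle is linear. By the uniqueness of horizontal lifts (as in Theorem \ref{lifting}, which governs every transport functor constructed in this paper) it therefore coincides with $L((\gamma^{-1},d),b')$, and reading off endpoints gives $T(\gamma^{-1},d)(b')=b$; by symmetry the displayed identity follows. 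Granting it, $\iota$ descends to $\HH(T)$ — equality of $T(\gamma_{i},d_{i})$ forces equality of their inverses $T(\gamma_{i}^{-1},d_{i})$ — and $[(\gamma^{-1},d)]$ is a two-sided inverse of $[(\gamma,d)]$, because each concatenation transports to $T(\gamma,d)T(\gamma,d)^{-1}=\id$ and hence represents an identity morphism. Every morphism of $\HH(T)$ is thus invertible, and smoothness of $\bar\iota$ follows from $\bar\iota\circ q=q\circ\iota$ together with $q$ being a subduction.

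The main obstacle is precisely this reversal identity: it is what upgrades the quotient from a mere diffeological category to a diffeological groupoid, and it genuinely uses the structure of $L$ as a (unique, horizontal) lift, since a general smooth functor into $\Aut(\pi_{B})$ need not send $(\gamma^{-1},d)$ to the inverse of its value on $(\gamma,d)$. A secondary technical point worth isolating is the verification that $q\times q$ is a subduction onto the fibre product, for which the exact matching of composability conditions above is the crucial ingredient.
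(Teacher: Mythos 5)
Your proof is correct and follows the same overall strategy as the paper: functoriality of $T$ makes $\sim_{T}$ a congruence so that the categorical structure descends to the pushforward diffeology, and the whole weight of the groupoid claim rests on the reversal identity $T(\gamma^{-1},d)=T(\gamma,d)^{-1}$. Where you differ is in how that identity is justified. The paper disposes of it in one line, asserting that it ``follows from the factorisation $T=r\circ L$ and the fact that $r\circ\iota=s$ on $\PP(X)$''; you instead derive it from uniqueness of horizontal lifts, arguing that the orientation reversal of a lift is the lift of the reversal and reading off endpoints. Your version is the more honest one: you correctly observe that functoriality alone cannot give the identity (since $\gamma^{-1}\gamma$ is not an identity morphism of $\PP(X)$), and the paper's appeal to $r\circ\iota=s$ secretly needs exactly the compatibility $L\circ(\iota\times\mathrm{id})=\iota\circ L$ that you prove. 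The one caveat is that Definition \ref{transportfunctor} only posits \emph{existence} of a smooth $L$ with $T=r\circ L$ --- it does not require $L$ to be a unique horizontal lift, or even to project to $\gamma$ or start at $b$ --- so your invocation of ``uniqueness of horizontal lifts'' imports a property of the concrete transport functors of Theorems \ref{lifting}, \ref{transport} and \ref{liftingB} rather than of the abstract definition; but the paper's own proof has precisely the same implicit dependence, so this is a defect of the stated hypotheses rather than of your argument. Your careful check that $q\times q$ subduces onto the fibre product is additional detail the paper omits, and buys a genuinely complete verification of smoothness of the descended composition.
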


\begin{proof}
	Since $T$ is a functor it preserves the composition of morphisms in $\PP(X)$, so that the formula
	\[
	[(\gamma_{1},d_{1})][(\gamma_{2},d_{2})]:=[(\gamma_{1}\gamma_{2},d_{1}+d_{2})]
	\]
	defines a smooth, associative multiplication on $\HH(T)$.  To see that $\HH(T)$ is a groupoid then it suffices to show that the map $\iota:(\gamma,d)\mapsto(\gamma^{-1},d)$ is mapped by $T$ to the inversion in the groupoid $\Aut(\pi_{B})$.  However this follows from the factorisation $T = r\circ L$ of Definition \ref{transportfunctor} and the fact that $r\circ\iota = s$ on $\PP(X)$.
\end{proof}

Note that leafwise transport functors also induce diffeological groupoids as quotients of the smaller category $\PP_{T\FF}(M)$.  These are the groupoids that will be of primary interest in this paper.

\begin{defn}
	Let $X$ be a diffeological space, $\pi:B\rightarrow X$ a diffeological fibre bundle, and $T:\PP(X)\rightarrow\Aut(\pi_{B})$ a transport functor.  We refer to the associated diffeological groupoid $\HH(T)$ as the \textbf{holonomy groupoid associated to $T$}.
\end{defn}

We will see in the next section that we can recover the Winkelnkemper-Phillips holonomy groupoid of a foliation $(M,\FF)$ from a canonical leafwise transport functor in the sense of Definition \ref{transportfunctor}.  Our constructions can be seen as a rigorous justification for the nomenclature ``holonomy" in referring to the Winkelnkemper-Phillips holonomy groupoid.

\section{The holonomy groupoid of a foliated manifold}\label{ss1}

We can associate to any foliated manifold a canonical transport functor, whose associated holonomy groupoid is the Winkelnkemper-Phillips holonomy groupoid.  This transport functor is inspired by J. Phillips' careful work in \cite{holimper}.

\subsection{A principal bundle of germs}

For the entirety of this section, let $(M,\FF)$ be a foliated manifold of codimension $q$.  Recall (Definition \ref{distinguishedsec}) that we say that a smooth, $\RB^{q}$-valued function $f$ defined on an open subset $\dom(f)$ of $M$ is a \emph{distinguished function on $M$} if for each $x\in \dom(f)$, there exists a foliated chart $(U_{\alpha},x_{\alpha},y_{\alpha})$ about $x$ in $\dom(f)$ such that $f|_{U_{\alpha}} = y_{\alpha}$.  We denote by $\DS(M,\FF)$ the set of all distinguished functions on $M$, equipped with the functional diffeology of Definition \ref{functionaldiff2}. We construct a diffeological fibre bundle of germs of distinguished functions as follows.

Let us consider the diffeological product $M\times\DS(M,\FF)$, and the diffeological subset
\[
S:=\{(x,f)\in M\times\DS(M,\FF):x\in\dom(f),\,f(x) = 0\}.
\]
We declare two points $(x,f)$ and $(y,g)$ in $S$ to be equivalent, written $(x,f)\sim(y,g)$, if and only if $x = y$ and the germs $[f]_{x}$ and $[g]_{x}$ are equal, and denote by
\[
\DS_{\g}(M,\FF):=S/\sim
\]
the corresponding diffeological quotient.  Clearly then $\DS_{\g}(M,\FF)$ consists of pairs $(x,[f]_{x})$, where $x\in M$ and where $[f]_{x}$ is the germ at $x$ of some distinguished function $f$ defined in a neighbourhood of $x$, with $f(x) =0$. Moreover, a parametrisation $\rho:U\rightarrow \DS_{\g}(M,\FF)$ is a plot if and only if for each $u\in U$ there exists a neighbourhood $V$ of $u$ in $U$, and plots $\tilde{x}:V\rightarrow M$, $\tilde{f}:V\rightarrow\DS(M,\FF)$ such that $\tilde{x}(v)\in\dom(\tilde{f}(v))$, $\tilde{f}(v)\big(\tilde{x}(v)\big) = 0$, and such that
\[
\rho(v) = (\tilde{x}(v),[\tilde{f}(v)]_{\tilde{x}(v)})
\]
for all $v\in V$.  We denote by $\pi_{\DS_{\g}(M,\FF)}:\DS_{\g}(M,\FF)\rightarrow M$ the canonical surjective map defined by
\[
\pi_{\DS_{\g}(M,\FF)}(x,[f]_{x}):=x,\hspace{7mm}(x,[f]_{x})\in\DS_{\g}(M,\FF).
\]
The following result is then elemetary to verify.

\begin{lemma}
	Let $(M,\FF)$ be a foliated manifold.  Then the map $\pi_{\DS_{\g}(M,\FF)}:\DS_{\g}(M,\FF)\rightarrow M$ is a subduction.
\end{lemma}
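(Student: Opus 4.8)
The plan is to verify directly that the pushforward diffeology on $M$ induced by $\pi_{\DS_{\g}(M,\FF)}$ coincides with the given manifold diffeology (Definition \ref{subind}). Since $\pi_{\DS_{\g}(M,\FF)}$ is manifestly smooth, every parametrisation of $M$ that is locally constant or factors through a plot of $\DS_{\g}(M,\FF)$ is automatically a plot of $M$, so one inclusion comes for free. The entire content of the statement is therefore the reverse inclusion: that every plot $\varphi:U\rightarrow M$ lifts, locally about each point of $U$, through $\pi_{\DS_{\g}(M,\FF)}$ to a plot of $\DS_{\g}(M,\FF)$. Surjectivity of $\pi_{\DS_{\g}(M,\FF)}$ will fall out of this lifting applied to constant plots.

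First I would fix a plot $\varphi:U\rightarrow M$ and a point $u_{0}\in U$, set $x_{0}:=\varphi(u_{0})$, and choose a foliated chart $(U_{\alpha},x_{\alpha},y_{\alpha})$ about $x_{0}$. Shrinking to a neighbourhood $V$ of $u_{0}$ on which $\varphi(V)\subset U_{\alpha}$, I would lift $\varphi|_{V}$ by \emph{recentring} the transverse coordinate: for each $v\in V$ define the locally defined smooth function $\tilde{f}(v):=y_{\alpha}-y_{\alpha}(\varphi(v))$ on $U_{\alpha}$. By construction $\tilde{f}(v)(\varphi(v))=0$, so the candidate lift $v\mapsto\big(\varphi(v),[\tilde{f}(v)]_{\varphi(v)}\big)$ takes values in $\DS_{\g}(M,\FF)$ and projects onto $\varphi|_{V}$ under $\pi_{\DS_{\g}(M,\FF)}$.

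The key step, and the only one requiring real care, is to confirm the candidate is legitimate on two counts. The first is that each $\tilde{f}(v)$ is genuinely a \emph{distinguished} function: this holds because post-composing $\psi_{\alpha}$ with the translation $(x,y)\mapsto(x,y-y_{\alpha}(\varphi(v)))$ of $\RB^{p}\times\RB^{q}$ yields another foliated chart, which lies in the maximal foliated atlas (translations in the transverse factor respect the glueing condition \eqref{compatible} and the cocycle property), and whose transverse coordinate is exactly $\tilde{f}(v)$. The second is that $\tilde{f}:V\rightarrow\DS(M,\FF)$ is a plot for the functional diffeology of Definition \ref{functionaldiff2}: here I would take the common domain $W:=U_{\alpha}$, which is independent of $v$, and observe that the map $V\times U_{\alpha}\ni(v,x)\mapsto y_{\alpha}(x)-y_{\alpha}(\varphi(v))$ is smooth because $y_{\alpha}$ and $\varphi$ are smooth. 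Granting these, the characterisation of plots of $\DS_{\g}(M,\FF)$ recorded just above the statement shows that $v\mapsto\big(\varphi(v),[\tilde{f}(v)]_{\varphi(v)}\big)$ is indeed a plot, completing the local lift and hence the proof.

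I expect the main obstacle to be the first of these verifications, namely checking that a transverse coordinate remains distinguished after recentring. This is where the definition of distinguished function interacts with the maximality of the foliated atlas, and it is the geometric reason the germ bundle projects as a subduction rather than merely smoothly; everything else reduces to unwinding the functional and quotient diffeologies.
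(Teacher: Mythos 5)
Your proposal is correct and follows essentially the same route as the paper: both proofs lift a plot $\varphi$ of $M$ locally through a foliated chart by recentring the transverse coordinate to $\tilde{f}(v)=y_{\alpha}-y_{\alpha}(\varphi(v))$, check via the compatibility condition \eqref{compatible} that the recentred coordinate remains in the maximal foliated atlas, and verify smoothness of the resulting lift using the functional diffeology on a common domain. There is no substantive difference in strategy or in the key verification steps.
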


\begin{proof}
	The map $\pi_{\DS_{\g}(M,\FF)}$ is clearly surjective, so we need only show that the standard diffeology on $M$ is the pushforward diffeology from $\pi_{\DS_{\g}(M,\FF)}$.  Let $\tilde{x}:U\rightarrow M$ be a plot.  For each $u\in U$, we can find a sufficiently small neighbourhood $V$ of $u$ in $U$ such that $\tilde{x}|_{V}$ takes values in some foliated chart $(U_{\alpha},x_{\alpha},y_{\alpha})$ containing $\tilde{x}(u)$.  For each $v\in V$, we define
	\[
	\tilde{y}_{\alpha}(v)\big(x\big):=y_{\alpha}(x) - y_{\alpha}(\tilde{x}(v)),
	\]
	and observe then that each $(U_{\alpha},x_{\alpha},\tilde{y}_{\alpha}(v))$ satisfies the compatibility condition given in Equation \eqref{compatible} with respect to $(U_{\alpha},x_{\alpha},y_{\alpha})$, hence is a member of the foliated atlas associated to $(M,\FF)$.  We denote by $\tilde{y}_{\alpha}:V\rightarrow \DS(M,\FF)$ the plot sending $v\in V$ to the distinguished function $\tilde{y}_{\alpha}(v)$, and then observe that the plot $\rho:V\rightarrow \DS_{\g}(M,\FF)$ defined by
	\[
	\rho(v):=(\tilde{x}(v),[\tilde{y}_{\alpha}(v)]_{\tilde{x}(v)}),\hspace{7mm}v\in V
	\]
	satisfies $\pi_{\DS_{\g}(M,\FF)}\circ\rho = \tilde{x}|_{V}$.
\end{proof}

The space $\DS_{\g}(M,\FF)$ carries additional structure.  Denote by $\Diff^{\loc}_{0}(\RB^{q})$ the space of local diffeomorphisms of $\RB^{q}$ that are defined in a neighbourhood of $0$ and that fix $0$, equipped with the functional diffeology of Definition \ref{functionaldiff2}.  Denote by $\g\Diff^{\loc}_{0}(\RB^{q})$ the diffeological quotient of $\Diff^{\loc}_{0}(\RB^{q})$ that identifies two local diffeomorphisms if and only if they have the same germ at $0$.  Thus $\g\Diff^{\loc}_{0}(\RB^{q})$ is a diffeological group under composition of germs.  Now the group $\g\Diff^{\loc}_{0}(\RB^{q})$ acts canonically on the right of $\DS_{\g}(M,\FF)$ according to the formula
\[
(x,[f]_{x})\cdot[\varphi]_{0}:=(x,[\varphi^{-1}\circ f]_{x}),\hspace{7mm}(x,[f]_{x})\in\DS_{\g}(M,\FF),\,[\varphi]_{0}\in\g\Diff^{\loc}_{0}(\RB^{q})
\]
where the composite $\varphi^{-1}\circ f$ is taken on any open subset of the domain of $f$ on which this makes sense.  Since our foliated atlases are by definition maximal (see Definition \ref{charts}), this right action is principal in the sense of Definition \ref{principal}.

\begin{prop}
	Let $(M,\FF)$ be a foliated manifold of codimension $q$.  Then $\pi_{\DS_{\g}(M,\FF)}:\DS_{\g}(M,\FF)\rightarrow M$ is a principal $\g\Diff^{\loc}_{0}(\RB^{q})$-bundle.
\end{prop}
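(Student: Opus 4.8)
The plan is to verify the two conditions of Definition \ref{principal} for the action given just above the statement: that the action is principal, and that the orbit space $\DS_{\g}(M,\FF)/G$, with $G:=\g\Diff^{\loc}_{0}(\RB^{q})$, is diffeomorphic to $M$ compatibly with $\pi_{\DS_{\g}(M,\FF)}$. The first condition has already been observed to follow from maximality of the foliated atlas, so the substance of the argument lies in identifying the orbit space with $M$, and the crux of that is showing the $G$-action is transitive on each fibre of $\pi_{\DS_{\g}(M,\FF)}$.

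First I would show that the $G$-orbits are precisely the fibres of $\pi_{\DS_{\g}(M,\FF)}$. Since the action fixes the base point, every orbit lies in a single fibre, so only transitivity on fibres requires proof. Given germs $[f]_{x},[g]_{x}$ of distinguished functions at $x$ with $f(x)=0=g(x)$, I would use Definition \ref{distinguishedsec} to write $f=y_{\alpha}$ and $g=y_{\beta}$ near $x$ for foliated charts $(U_{\alpha},x_{\alpha},y_{\alpha})$ and $(U_{\beta},x_{\beta},y_{\beta})$ containing $x$. The compatibility condition \eqref{compatible} then yields $f=y_{\alpha\beta}\circ g$ on a neighbourhood of $x$, where $y_{\alpha\beta}$ is the transverse transition diffeomorphism; and since $f(x)=g(x)=0$ forces $y_{\alpha\beta}(0)=0$, the germ $[y_{\alpha\beta}]_{0}$ lies in $G$. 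Hence $(x,[g]_{x})=(x,[f]_{x})\cdot[y_{\alpha\beta}]_{0}$, which gives transitivity.

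It follows that assigning to the orbit of $(x,[f]_{x})$ the point $x$ defines a bijection $\bar{\pi}:\DS_{\g}(M,\FF)/G\rightarrow M$, whose inverse $f$ satisfies $f\circ\pi_{\DS_{\g}(M,\FF)}=q$, where $q$ is the quotient map carrying the pushforward diffeology. To finish I would verify that both $\bar{\pi}$ and $f$ are smooth. Smoothness of $\bar{\pi}$ is immediate: a plot of $\DS_{\g}(M,\FF)/G$ is locally constant or of the form $q\circ\tilde{\varphi}$ for a plot $\tilde{\varphi}$ of $\DS_{\g}(M,\FF)$, and in the latter case $\bar{\pi}\circ q\circ\tilde{\varphi}=\pi_{\DS_{\g}(M,\FF)}\circ\tilde{\varphi}$ is a plot of $M$. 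For smoothness of $f$ I would invoke the preceding lemma that $\pi_{\DS_{\g}(M,\FF)}$ is a subduction: any plot $\tilde{x}$ of $M$ lifts locally to a plot $\rho$ of $\DS_{\g}(M,\FF)$ with $\pi_{\DS_{\g}(M,\FF)}\circ\rho=\tilde{x}$, so that $f\circ\tilde{x}=q\circ\rho$ is a plot of the quotient. Thus $f$ is a diffeomorphism and $\pi_{\DS_{\g}(M,\FF)}$ is a principal $G$-bundle.

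The main obstacle is the transitivity step: one must confirm that the transition map $y_{\alpha\beta}$, which exists and is a local diffeomorphism of $\RB^{q}$ precisely because of the glueing and cocycle structure of Definition \ref{charts}, genuinely fixes $0$ so that its germ lies in $G$. Everything else is formal diffeological bookkeeping, with the subduction lemma supplying the local lifts that make the inverse map smooth.
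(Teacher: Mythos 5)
Your decomposition is genuinely different from the paper's, and the two arguments end up being complementary rather than equivalent. The paper's proof is devoted entirely to the first clause of Definition \ref{principal}: it shows that the action map $A:\DS_{\g}(M,\FF)\times\g\Diff^{\loc}_{0}(\RB^{q})\rightarrow\DS_{\g}(M,\FF)\times\DS_{\g}(M,\FF)$ sending $\big((x,[f]_{x}),[\varphi]_{0}\big)$ to $\big((x,[f]_{x}),(x,[f]_{x})\cdot[\varphi]_{0}\big)$ is an induction, using exactly the transition-germ computation you perform (existence and uniqueness of $[y_{\alpha\beta}]_{0}$ with $(x,[y_{\alpha}]_{x})\cdot[y_{\alpha\beta}]_{0}=(x,[y_{\beta}]_{x})$) to obtain injectivity of $A$, followed by a plot-level computation comparing the product diffeology with the pullback diffeology along $A$. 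The identification of the orbit space with $M$ --- where you spend most of your effort --- is left implicit in the paper. Your treatment of that step (fibrewise transitivity via the cocycle, the bijection $\bar{\pi}$, and smoothness of both $\bar{\pi}$ and its inverse via the subduction lemma) is correct and is a worthwhile supplement to what the paper actually writes down.

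The gap is in your first paragraph. You set aside the condition that the action is principal on the grounds that it ``has already been observed to follow from maximality of the foliated atlas,'' but the sentence preceding the proposition in the text is an announcement, not a proof: verifying it is precisely the content of the paper's own argument, so you cannot cite it without circularity. Your transitivity computation, together with the uniqueness of the transition germ $[y_{\alpha\beta}]_{0}$, does yield injectivity of $A$, so that half is recoverable from what you wrote. What is never addressed is the diffeological half of the induction condition: one must check that a parametrisation of $\DS_{\g}(M,\FF)\times\g\Diff^{\loc}_{0}(\RB^{q})$ is a plot for the product diffeology if and only if its composite with $A$ is a plot of $\DS_{\g}(M,\FF)\times\DS_{\g}(M,\FF)$. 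The paper handles this by writing plots $\rho$ of $\DS_{\g}(M,\FF)$ and $\rho'$ of $\g\Diff^{\loc}_{0}(\RB^{q})$ locally in the form $\rho(v)=(\tilde{x}(v),[\tilde{f}(v)]_{\tilde{x}(v)})$ and $\rho'(v')=[\tilde{\varphi}(v')]_{0}$ and computing $A\circ(\rho\times\rho')$ explicitly. Without some such argument your proof establishes only that $M$ is diffeomorphic to the orbit space, not that the action is principal in the sense of Definition \ref{principal}, and both are required for $\pi_{\DS_{\g}(M,\FF)}$ to be a diffeological principal bundle.
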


\begin{proof}
	We need to show that the map $A:\DS_{\g}(M,\FF)\times\g\Diff^{\loc}_{0}(\RB^{q})\rightarrow\DS_{\g}(M,\FF)\times\DS_{\g}(M,\FF)$ defined by
	\[
\big((x,[f]_{x}),[\varphi]_{0}\big)\mapsto\big((x,[f]_{x}),(x,[f]_{x})\cdot[\varphi]_{0}\big),\hspace{7mm}\big((x,[f]_{x}),[\varphi]_{0}\big)\in\DS_{\g}(M,\FF)\times\g\Diff^{\loc}_{0}(\RB^{q}),
	\]
	is an induction.  Observe first that since the foliated atlas associated to $(M,\FF)$ is maximal, for any $(x,[y_{\alpha}]_{x})$ and $(x,[y_{\beta}]_{x})$ in $\DS_{\g}(M,\FF)$, deriving from transverse coordinates $y_{\alpha}$ and $y_{\beta}$ about $x$ respectively, with $y_{\alpha}(x) = 0 = y_{\beta}(x)$, the germ $[y_{\alpha\beta}]_{0}\in\g\Diff^{\loc}_{0}(\RB^{q})$ arising from the transverse coordinate change satisfies
	\[
	(x,[y_{\alpha}]_{x})\cdot[y_{\alpha\beta}]_{0} = (x,[y_{\beta\alpha}\circ y_{\alpha}]_{x}) = (x,[y_{\beta}]_{x}),
	\]
	and is uniquely determined in $\g\Diff^{\loc}_{0}(\RB^{q})$ by this property.  It follows immediately that $A$ is an injective map, and it remains only to show that the product diffeology on $\DS_{\g}(M,\FF)\times\g\Diff^{\loc}_{0}(\RB^{q})$ coincides with the pullback diffeology from $A$.  Let $\rho:U\rightarrow\DS_{\g}(M,\FF)$ and $\rho':U'\rightarrow\g\Diff^{\loc}_{0}(\RB^{q})$ be plots.  We must show that $A\circ(\rho\times\rho')$ is a plot of $\DS_{\g}(M,\FF)\times\DS_{\g}(M,\FF)$.  For each pair $(u,u')\in U\times U'$, there exist open neighbourhoods $V$ and $V'$ of $u$ and $u'$ respectively in $U$ and $U'$, and plots $\tilde{x}:V\rightarrow M$, $\tilde{f}:V\rightarrow\DS(M,\FF)$ and $\tilde{\varphi}:V'\rightarrow\Diff^{\loc}_{0}(\RB^{q})$ such that
	\[
	\rho(v) = (\tilde{x}(v),[\tilde{f}(v)]_{\tilde{x}(v)}),\hspace{7mm}\rho'(v') = [\tilde{\varphi}(v')]_{0}
	\]
	for all $(v,v')\in V\times V'$.  We compute
	\[
	(A\circ(\rho\times\rho'))(v,v') = \big((\tilde{x}(v),[\tilde{f}(v)]_{\tilde{x}(v)}),(\tilde{x}(v),[\tilde{\varphi}(v')^{-1}\circ \tilde{f}(v)]_{x})\big)
	\]
	for all $(v,v')\in V\times V'$, from which it is clear that $A\circ(\rho\times\rho')$ is a plot as required.
\end{proof}

\begin{defn}
	Let $(M,\FF)$ be a foliated manifold of codimension $q$.  We refer to the diffeological principal $\g\Diff^{\loc}_{0}(\RB^{q})$-bundle $\pi_{\DS_{\g}(M,\FF)}:\DS_{\g}(M,\FF)\rightarrow M$ as the \textbf{bundle of germs of distinguished functions on $(M,\FF)$}.
\end{defn}

The bundle of germs of distinguished functions on $(M,\FF)$ carries a canonical principal partial connection (Definition \ref{parcon}) induced by $\FF$.  For $(x,[f]_{x})$ denote by $H_{(x,[f]_{x})}$ the subspace of $T_{(x,[f]_{x})}\DS_{\g}(M,\FF)$ that consists of vectors of the form
\begin{equation}\label{hvec}
\rho(f,\gamma)_{*}(\partial_{t}),
\end{equation}
where $f$ is some representative of $[f]_{x}$, $\gamma:(-\epsilon,\epsilon)\rightarrow M$ is any smooth path contained in a leaf of $\FF$, with $\gamma(0) = x$ and with image contained in $\dom(f)$, and where $\rho(f,\gamma):(-\epsilon,\epsilon)\rightarrow\DS_{\g}(M,\FF)$ is the plot defined by
\[
\rho(f,\gamma)(t):=(\gamma(t),[f]_{\gamma(t)}),\hspace{7mm}t\in(-\epsilon,\epsilon).
\]
Notice that if $f'$ is any other representative of $[f]_{x}$, then by restricting the domain of $\gamma$ if necessary we have $\rho(f,\gamma) = \rho(f',\gamma)$.  Finally we define the diffeological subspace
\[
H:=\bigsqcup_{(x,[f]_{x})\in\DS_{\g}(M,\FF)}H_{(x,[f]_{x})}
\]
of $T\DS_{\g}(M,\FF)$.  The projection $\pi_{T\DS_{\g}(M,\FF)}$ restricts to $H$ to make $H$ a diffeological subbundle of $T\DS_{\g}(M,\FF)$.

\begin{prop}\label{Hcon}
	Let $(M,\FF)$ be a foliated manifold of codimension $q$.  Then $H$ is a principal partial connection on the on the principal $\g\Diff^{\loc}_{0}(\RB^{q})$-bundle $\pi_{\DS_{\g}(M,\FF)}:\DS_{\g}(M,\FF)\rightarrow M$.
\end{prop}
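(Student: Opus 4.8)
The plan is to verify the two defining properties from Definition \ref{conndiff}: first, that $d\pi_{\DS_{\g}(M,\FF)}$ restricts to a linear injection on each fibre $H_{(x,[f]_{x})}$, and second, that $H$ is equivariant under the right $\g\Diff^{\loc}_{0}(\RB^{q})$-action. The subbundle property of $H$ has already been recorded in the paragraph preceding the statement, so only these two points remain, and they can be treated independently.

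For injectivity I would work in a local trivialisation of $\pi_{\DS_{\g}(M,\FF)}$ adapted to the foliation. Fix $(x,[f]_{x})$ and a foliated chart $(U_{\alpha},x_{\alpha},y_{\alpha})$ about $x$, and trivialise $\DS_{\g}(M,\FF)|_{U_{\alpha}}\cong U_{\alpha}\times\g\Diff^{\loc}_{0}(\RB^{q})$ using the reference section $z\mapsto(z,[y_{\alpha}-y_{\alpha}(z)]_{z})$ built from the transverse coordinate, exactly as in the proof that $\pi_{\DS_{\g}(M,\FF)}$ is a subduction. The key computation is that, in this trivialisation, the group component of a horizontal curve $\rho(f,\gamma)$ is constant in $t$. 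This holds because $\gamma$ is leafwise, so $y_{\alpha}\circ\gamma$ is constant equal to $c_{0}:=y_{\alpha}(x)$, while $f$ distinguished means $f = F\circ y_{\alpha}$ near $x$ for a germ $F$ of a local diffeomorphism of $\RB^{q}$ with $F(c_{0})=0$; hence for each $t$ the germ of $f$ at $\gamma(t)$ corresponds, relative to the reference section, to the single fixed element $[F(\cdot+c_{0})]_{0}\in\g\Diff^{\loc}_{0}(\RB^{q})$ (up to inverse, per the convention for the action), independently of $t$. Writing $\Phi_{\alpha}$ for the trivialising diffeomorphism, one then has $\Phi_{\alpha}\circ\rho(f,\gamma)(t) = (\gamma(t),[\varphi]_{0})$ with $[\varphi]_{0}$ constant, so that $d\Phi_{\alpha}\big(\rho(f,\gamma)_{*}(\partial_{t})\big) = (\dot\gamma(0),0)$. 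This simultaneously identifies $H_{(x,[f]_{x})}$ with the linear subspace $T_{x}\FF\times\{0\}$ (showing in particular that it is a genuine linear subspace depending only on $\dot\gamma(0)$) and realises $d\pi_{\DS_{\g}(M,\FF)}$ on it as the projection $(\dot\gamma(0),0)\mapsto\dot\gamma(0)$, which is visibly injective, indeed an isomorphism onto $T_{x}\FF$.

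For the equivariance the argument is purely formal. Given $[\psi]_{0}\in\g\Diff^{\loc}_{0}(\RB^{q})$, the definition of the action yields $R_{[\psi]_{0}}\circ\rho(f,\gamma) = \rho(\psi^{-1}\circ f,\gamma)$, since $R_{[\psi]_{0}}(\gamma(t),[f]_{\gamma(t)}) = (\gamma(t),[\psi^{-1}\circ f]_{\gamma(t)})$; here $\psi^{-1}\circ f$ is again a distinguished function vanishing at $x$, by maximality of the foliated atlas and the fact that $\psi$ fixes $0$. Applying the tangent functor (Definition \ref{differential}) gives $dR_{[\psi]_{0}}\big(\rho(f,\gamma)_{*}(\partial_{t})\big) = \rho(\psi^{-1}\circ f,\gamma)_{*}(\partial_{t})\in H_{(x,[f]_{x})\cdot[\psi]_{0}}$, whence $dR_{[\psi]_{0}}(H_{(x,[f]_{x})})\subseteq H_{(x,[f]_{x})\cdot[\psi]_{0}}$; the reverse inclusion follows by running the same argument with $[\psi]_{0}^{-1}$, or by a dimension count since both fibres are $p$-dimensional, giving the required equality.

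The main obstacle is the first step: because the tangent spaces in play are diffeological rather than classical, one cannot simply assert that $H_{(x,[f]_{x})}$ is a linear subspace on which $d\pi_{\DS_{\g}(M,\FF)}$ is injective, but must exhibit the explicit adapted trivialisation in which these claims become transparent. The crux is the observation that a leafwise path has locally constant transverse coordinate and that a distinguished function factors through that coordinate, so that the horizontal lift is literally constant in the group direction; once this is in place, both linearity and injectivity are immediate, and the equivariance is a one-line consequence of the definitions.
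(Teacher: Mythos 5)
Your proposal is correct, and its second half (equivariance) coincides exactly with the paper's: both apply the tangent functor to the identity $R_{[\varphi]_{0}}\circ\rho(f,\gamma) = \rho(\varphi^{-1}\circ f,\gamma)$ to conclude $dR_{[\varphi]_{0}}\big(\rho(f,\gamma)_{*}(\partial_{t})\big) = \rho(\varphi^{-1}\circ f,\gamma)_{*}(\partial_{t})\in H_{(x,[\varphi^{-1}\circ f]_{x})}$, with the reverse inclusion obtained from the inverse group element. Where you genuinely diverge is on fibrewise injectivity. The paper argues directly on generators: it observes that $d\pi_{\DS_{\g}(M,\FF)}$ sends $\rho(f,\gamma_{i})_{*}(\partial_{t})$ to $(\gamma_{i})_{*}(\partial_{t})$ and asserts that these images agree exactly when the horizontal vectors themselves agree, from which injectivity "follows"; the substantive implication (equal projections forces equal horizontal vectors, together with the implicit claim that $H_{(x,[f]_{x})}$ is a linear subspace) is left to the reader. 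You instead build the adapted trivialisation $\Phi_{\alpha}$ from the reference section $z\mapsto(z,[y_{\alpha}-y_{\alpha}(z)]_{z})$ and show that a horizontal generator has constant group component there --- using precisely that $y_{\alpha}\circ\gamma$ is constant and that $f$ factors through $y_{\alpha}$ via a germ of a local diffeomorphism --- so that $H_{(x,[f]_{x})}$ is identified with $T_{x}\FF\times\{0\}$ and $d\pi_{\DS_{\g}(M,\FF)}$ with the first projection. This costs you the (routine but nontrivial) verification that a smooth local section of a diffeological principal bundle trivialises it and that internal tangent spaces of products split, but it buys an honest proof of both the linearity of $H_{(x,[f]_{x})}$ and the injectivity that the paper only asserts. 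Both routes are valid; yours is the more self-contained.
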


\begin{proof}
	If $(x,[f]_{x})\in\DS_{\g}(M,\FF)$, and $\rho(f,\gamma_{1})_{*}(\partial_{t})$ and $\rho(f,\gamma_{2})_{*}(\partial_{t})$ are two elements of $H_{(x,[f]_{x})}$ defined as in Equation \eqref{hvec}, then their images under $d\pi_{\DS_{\g}(M,\FF)}$ are equal if and only if $(\gamma_{1})_{*}(\partial_{t}) = (\gamma_{2})_{*}(\partial_{t})$ in $T_{x}\FF$.  Fibrewise injectivity of $d\pi_{\DS_{\g}(M,\FF)}|_{H}$ follows.
	
	It remains only to show that the right action of $\g\Diff^{\loc}_{0}(\RB^{q})$ sends $H$ into itself.  Suppose $(x,[f]_{x})\in\DS_{\g}(M,\FF)$, and choose a representative $f$ of $[f]_{x}$ and a smooth leafwise path $\gamma:(-\epsilon,\epsilon)\rightarrow M$ contained in $\dom(f)$ with $\gamma(0) = x$, so that $\rho(f,\gamma)_{*}(\partial_{t})\in H_{(x,[y_{\alpha}]_{x})}$.  For $[\varphi]_{0}\in\Diff^{\loc}_{0}(\RB^{q})$ denote by $R_{[\varphi]_{0}}$ the right action of $[\varphi]_{0}\in\g\Diff^{\loc}_{0}(\RB^{q})$ on $\DS_{\g}(M,\FF)$.  Then we see that 
	\[
	(dR_{[\varphi]_{0}})_{(x,[f]_{x})}\big(\rho(f,\gamma)_{*}(\partial_{t})\big) = \rho(\varphi^{-1}\circ f,\gamma)_{*}(\partial_{t})
	\]
	is the element of $H_{R_{[\varphi]_{0}}(x,[f]_{x})} = H_{(x,[\varphi^{-1}\circ f]_{x})}$ determined by the path $\gamma$.  Thus the action of $\g\Diff^{\loc}_{0}(\RB^{q})$ preserves $H$.
\end{proof}

\subsection{Parallel transport in the bundle of germs}

The partial connection $H$ can be used to lift leafwise paths in $M$ to $H$-wise paths in the total space $\DS_{\g}(M,\FF)$ in essentially the classical fashion.

\begin{thm}\label{parallel}
	Let $(M,\FF)$ be a foliated manifold of codimension $q$, and let $(\gamma,d)\in\PP_{T\FF}(M)$.  Denote $x:=\gamma(0)$. Then for each $(x,[f]_{x})\in\DS_{\g}(M,\FF)_{\gamma(0)}$, there exists a unique element smooth map $\gamma_{[f]_{x}}:\RB_{+}\rightarrow\DS_{\g}(M,\FF)$ such that:
	\begin{enumerate}
		\item $\gamma_{[f]_{x}}(0) = (x,[f]_{x})$,
		\item $\pi_{\DS_{\g}(M,\FF)}(\gamma_{[f]_{x}}(t)) = \gamma(t)$ for all $t\in\RB_{+}$, and
		\item $d\gamma_{[f]_{x}}:T\RB_{+}\rightarrow T\DS_{\g}(M,\FF)$ takes values in $H$.
	\end{enumerate}
\end{thm}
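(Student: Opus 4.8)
The plan is to solve the parallel-transport equation exactly as one would in a finite-dimensional bundle: by working in foliated charts, where the connection $H$ is flat along the leaves, and then patching the local solutions together along a chain of charts covering $\gamma$. The starting observation is that over a single foliated chart $(U_{\alpha},x_{\alpha},y_{\alpha})$ the problem trivialises completely. Indeed, if $f$ is a distinguished function with $f(\gamma(t_{0})) = 0$ whose domain meets $U_{\alpha}$, then along any leafwise path the transverse coordinate $y_{\alpha}$ is constant on plaques, so $f$ is constant and equal to $0$ along $\gamma$; consequently the curve $t\mapsto\rho(f,\gamma)(t) = (\gamma(t),[f]_{\gamma(t)})$ lands in $\DS_{\g}(M,\FF)$ and, by the very definition of $H$ in Equation \eqref{hvec}, is tangent to $H$ and projects to $\gamma$. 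First I would record that, over $U_{\alpha}$, this is the \emph{only} such curve through a given point: by Proposition \ref{Hcon} the map $d\pi_{\DS_{\g}(M,\FF)}$ restricts to a fibrewise injection on $H$, so a horizontal vector lying over $\gamma_{*}(\partial_{t})$ is uniquely determined by $\gamma_{*}(\partial_{t})$ and the current germ, and the germ-constant curve $\rho(f,\gamma)$ is the integral curve realising it. This yields local existence, uniqueness, and (since $\rho(f,\gamma)$ is manifestly a plot) smoothness of the lift in each chart.

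Next I would globalise by a standard chain argument. Using compactness of $\gamma([0,d])$, choose $0 = t_{0}<\dots<t_{k} = d$ and foliated charts $U_{i}$ with $\gamma([t_{i-1},t_{i}])\subset U_{i}$, and build $\gamma_{[f]_{x}}$ inductively: on $[0,t_{1}]$ take $\rho(f_{1},\gamma)$ for a representative $f_{1}$ of the initial germ $[f]_{x}$ valid on $U_{1}$, and at each junction $\gamma(t_{i})$ choose a distinguished function $f_{i+1}$, valid on $U_{i+1}$, with $[f_{i+1}]_{\gamma(t_{i})} = [f_{i}]_{\gamma(t_{i})}$. The existence of such an $f_{i+1}$ is exactly what maximality of the foliated atlas buys us: the germ $[f_{i}]_{\gamma(t_{i})}$ differs from the (recentred) transverse coordinate of $U_{i+1}$ by an element of $\g\Diff^{\loc}_{0}(\RB^{q})$, and composing that coordinate with a representative of this germ yields the desired distinguished function on $U_{i+1}$ — this is the germinal transition data, Winkelnkemper's holonomy, reappearing here as the jump in the structure-group coordinate. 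Since $[f_{i+1}]_{\gamma(t_{i})} = [f_{i}]_{\gamma(t_{i})}$ the two pieces agree at $t_{i}$, and because equal germs coincide on a neighbourhood, on a small interval about $t_{i}$ both pieces are given by a single representative; hence the glued map is a plot across each junction. The sitting-instants hypothesis then lets me extend the construction past $t = d$ by the constant value, producing a smooth $\gamma_{[f]_{x}}:\RB_{+}\rightarrow\DS_{\g}(M,\FF)$ satisfying conditions (1)--(3).

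For global uniqueness I would run an open--closed argument on $[0,d]$: the set on which two lifts satisfying (1)--(3) agree is closed and contains $0$, and if they agree at some $t_{0}$ then, working in a chart about $\gamma(t_{0})$, the local uniqueness from the first paragraph forces them to agree near $t_{0}$, so the set is also open and hence all of $[0,d]$ (and, by sitting instants, all of $\RB_{+}$). The conceptual crux of the whole argument is the reduction in the first paragraph: because $H$ is flat along leaves in each chart, the parallel-transport equation — which a priori lives in the infinite-dimensional diffeological bundle $\DS_{\g}(M,\FF)$, where no general ODE theory applies — collapses to the statement that the structure-group coordinate is locally constant. The only genuinely delicate points are then the verification that the patched map is a plot in the diffeological sense (handled by the ``equal germs agree on a neighbourhood'' observation together with the sitting instants) and the bookkeeping of the transition germs, neither of which is hard once the flat-in-chart picture is in place.
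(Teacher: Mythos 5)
Your overall architecture --- solve in a single foliated chart where the lift is germ-constant, chain the charts together using the transition germs of the foliated atlas, and propagate uniqueness along $[0,d]$ --- is the same as the paper's, and your existence construction is essentially identical to the one given there. The gap is in your local uniqueness step. You argue that since $d\pi_{\DS_{\g}(M,\FF)}$ is fibrewise injective on $H$ (Proposition \ref{Hcon}), the horizontal vector over $\gamma_{*}(\partial_{t})$ at each point of the fibre is unique, and hence the germ-constant curve $\rho(f,\gamma)$ is \emph{the} integral curve through a given point. But pointwise uniqueness of horizontal lifts of tangent vectors only pins down the \emph{velocity} that a lift must have at each instant; concluding that the \emph{curve} itself is unique is an ODE-uniqueness statement, and, as you yourself note, no Picard--Lindel\"{o}f theorem is available in the diffeological bundle $\DS_{\g}(M,\FF)$. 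Two smooth curves through the same point could a priori both project to $\gamma$ and be tangent to $H$ without coinciding; nothing in the fibrewise injectivity of $d\pi_{\DS_{\g}(M,\FF)}|_{H}$ rules this out, so your open--closed argument for global uniqueness rests on a local step that has not actually been established.

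The paper's substitute for the missing ODE theory is Lemma \ref{formlem}: by unwinding what equality of internal tangent vectors means --- that is, the equivalence relation \eqref{equiv} defining $T_{(x,[f]_{x})}\DS_{\g}(M,\FF)$ as a quotient over plots --- one shows that \emph{any} smooth lift of $\gamma$ satisfying conditions 1--3 must, near each $t$, be of the form $s\mapsto(\gamma(s),[f_{t}]_{\gamma(s)})$ for a single fixed distinguished function $f_{t}$. This is exactly your claim that ``the structure-group coordinate is locally constant'', but it has to be \emph{derived} from condition 3 rather than asserted from the flat-in-chart picture; it is the one genuinely non-formal step in the proof. Once that structural characterisation of $H$-tangent lifts is in hand, your germ-propagation argument for uniqueness and your chained existence construction go through exactly as in the paper.
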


To prove this theorem we require the following lemma, which characterises solutions to the problem posed by Theorem \ref{parallel}.

\begin{lemma}\label{formlem}
	Let $(M,\FF)$ be a foliated manifold of codimension $q$, and let $(\gamma,d)\in\PP_{T\FF}(M)$.  A map $\gamma_{[f]_{x}}:\RB_{+}\rightarrow\DS_{\g}(M,\FF)$ satisfies items 1., 2. and 3. of Theorem \ref{parallel} if and only there exists a smooth function $\tilde{f}:\RB_{+}\rightarrow\DS(M,\FF)$, for which
	\begin{equation}\label{form}
	\gamma_{[f]_{x}}(t) = (\gamma(t),[\tilde{f}(t)]_{\gamma(t)}),\hspace{7mm}t\in\RB_{+},
	\end{equation}
	and such that for each $t\in\RB_{+}$, there is a neighbourhood $V_{t}$ of $t$ and $f_{t}\in\DS(M,\FF)$ such that $\gamma(s)\in\dom(f_{t})\subset\dom(\tilde{f}(s))$ and $\tilde{f}(s)|_{\dom(f_{t})} = f_{t}$ for all $s\in V_{t}$, where in particular $[f_{0}]_{x} = [f]_{x}$.
\end{lemma}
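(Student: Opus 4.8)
The plan is to prove both implications, reducing throughout to the local picture afforded by a foliated chart, in which the bundle of germs trivialises and the partial connection $H$ becomes the ``constant germ'' distribution. For the \emph{backward} implication, suppose $\tilde f$ is as described and set $\gamma_{[f]_{x}}(t):=(\gamma(t),[\tilde f(t)]_{\gamma(t)})$. Since $\gamma(t)\in\dom(\tilde f(t))$ and $\tilde f$ is a plot of $\DS(M,\FF)$, the plot characterisation of $\DS_{\g}(M,\FF)$ recalled above shows $\gamma_{[f]_{x}}$ is smooth, and items 1 and 2 of Theorem \ref{parallel} are immediate (item 1 uses $[f_{0}]_{x}=[f]_{x}$; the normalisation $\tilde f(t)(\gamma(t))=0$ holds because $\gamma$ is leafwise and each $f_{t}$ is a distinguished function, hence constant along $\gamma$). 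For item 3, fix $t$ and work on $V_{t}$: there $\tilde f(s)|_{\dom(f_{t})}=f_{t}$, so $[\tilde f(s)]_{\gamma(s)}=[f_{t}]_{\gamma(s)}$ and hence $\gamma_{[f]_{x}}|_{V_{t}}=\rho(f_{t},\gamma|_{V_{t}})$, whose velocity is by construction a vector of the form \eqref{hvec} and so lies in $H$ (at sitting instants the velocity is the zero vector, which lies in $H$ trivially).

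For the \emph{forward} implication, assume $\gamma_{[f]_{x}}$ satisfies items 1--3. Item 2, combined with the plot characterisation of $\DS_{\g}(M,\FF)$, lets me write on a neighbourhood of any $t$ that $\gamma_{[f]_{x}}(s)=(\gamma(s),[\tilde h(s)]_{\gamma(s)})$ for some plot $\tilde h$ into $\DS(M,\FF)$ that is jointly smooth on a set $V\times W$ with $\gamma(V)\subset W$. I would then pass to a foliated chart $(U_{\alpha},x_{\alpha},y_{\alpha})$ containing $\gamma(t)$, so that $\tilde h(s)|_{U_{\alpha}}=\phi(s)\circ y_{\alpha}$ for a smooth family $\phi(s)$ of local diffeomorphisms of $\RB^{q}$; since $\gamma$ is leafwise, $y_{\alpha}\circ\gamma$ is constant, say $\equiv y_{0}$. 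In this trivialisation $\gamma_{[f]_{x}}(s)$ is the reference horizontal lift $\rho(y_{\alpha},\gamma)(s)$ acted on by the group element $[\phi(s)^{-1}]_{y_{0}}\in\g\Diff^{\loc}_{0}(\RB^{q})$. Differentiating and using that $H$ is a \emph{principal} partial connection (Proposition \ref{Hcon}), the velocity splits as a horizontal term plus the vertical fundamental vector attached to $\tfrac{d}{ds}[\phi(s)^{-1}]_{y_{0}}$; since $d\pi_{\DS_{\g}(M,\FF)}$ restricts to an injection on $H$ (Definition \ref{conndiff}), item 3 forces the vertical term to vanish, so the germ-coordinate $[\phi(s)^{-1}]_{y_{0}}$ has zero velocity for every $s\in V$. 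Granting that this yields a genuinely \emph{constant} germ (addressed below), the germ $[\tilde h(s)]_{\gamma(s)}$ is represented for all $s\in V$ by the single fixed distinguished function $f_{t}:=\phi(t)\circ y_{\alpha}$; setting $\tilde f:=f_{t}$ on $V_{t}$ then yields \eqref{form} with $\tilde f(s)|_{\dom(f_{t})}=f_{t}$, and $[f_{0}]_{x}=[f]_{x}$ follows from item 1.

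The step I expect to be the main obstacle is exactly this passage from the \emph{infinitesimal} flatness produced by item 3 to \emph{uniform} agreement on a full neighbourhood $V_{t}$. The difficulty is genuinely diffeological: an incidental lift $\tilde h$ may vary as a \emph{function} even while its germ along $\gamma$ does not, and a naive ``zero velocity'' statement only guarantees representatives that agree on a possibly shrinking neighbourhood of $y_{0}$ (one can build smooth families whose zero-locus in the transverse direction collapses to a point), so constancy of the germ is not a formal consequence of vanishing first derivatives. The resolution is that $H$ is flat: a horizontal curve is by definition the transport of a \emph{fixed} germ along the leafwise path $\gamma$, so rather than integrating the velocity of the arbitrary lift $\tilde h$ I would discard it and read off the constant representative $\phi(t)\circ y_{\alpha}$ directly from the reference lift $\rho(y_{\alpha},\gamma)$, which represents the germ uniformly on $V_{t}$. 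Finally, to assemble a single global $\tilde f:\RB_{+}\rightarrow\DS(M,\FF)$ from these local constant pieces, I would cover the compact image $\gamma([0,d])$, which lies in one leaf, by finitely many foliated charts and patch the local representatives across overlaps using the sheaf property of distinguished functions; the constant-germ condition guarantees consistency on overlaps, while possible nontriviality of holonomy is accommodated precisely because $\tilde f$ is permitted to be a nonconstant path in $\DS(M,\FF)$ whose domain slides along $\gamma$.
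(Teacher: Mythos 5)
Your backward implication is correct and essentially identical to the paper's: the hypotheses on $\tilde f$ force $\gamma_{[f]_{x}}|_{V_{t}} = \rho(f_{t},\gamma|_{V_{t}})$, whose velocities are by construction vectors of the form \eqref{hvec}. The forward implication, however, has a genuine gap, and it is exactly the one you flag yourself. Item 3 of Theorem \ref{parallel} is a pointwise condition on velocities, and, as you correctly observe, vanishing of the velocity of the ``germ coordinate'' $s\mapsto[\phi(s)^{-1}]_{0}$ in the diffeological group $\g\Diff^{\loc}_{0}(\RB^{q})$ does not imply local constancy. But your proposed resolution --- ``a horizontal curve is by definition the transport of a fixed germ along $\gamma$'' --- is circular: $H$ is defined fibrewise as the set of vectors of the form \eqref{hvec}, and a curve all of whose velocities lie in $H$ is \emph{not} by definition of the form $s\mapsto(\gamma(s),[f]_{\gamma(s)})$ for a single fixed $f$; that it is locally of this form is precisely the content of the forward implication. ``Reading off the constant representative from the reference lift'' presupposes the conclusion you are trying to establish.

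The paper closes this gap by a different mechanism, namely the colimit description of the internal tangent space. The hypothesis that $(\gamma_{[f]_{x}}\circ\tau_{t})_{*}(\partial_{s})$ lies in $H$ means that it coincides with $\rho(f_{t},\gamma_{t})_{*}(\partial_{s})$ for a \emph{specific realizing plot} $\rho(f_{t},\gamma_{t})$; the generating relations \eqref{equiv} of $T_{\gamma_{[f]_{x}}(t)}\DS_{\g}(M,\FF)$ are then invoked to produce a germ at $0$ of a reparametrization $\phi$ of the time variable with $\gamma_{[f]_{x}}\circ\tau_{t} = \rho(f_{t},\gamma_{t})\circ\phi$ near $0$, whence $[\tilde{f}(s)]_{\gamma(s)} = [f_{t}]_{\gamma(s)}$ for all $s$ in a genuine neighbourhood $V_{t}$ of $t$, with the single representative $f_{t}$ on the fixed domain $\dom(f_{t})$. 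This converts the infinitesimal hypothesis into local information, which is exactly the step missing from your argument. A secondary issue: the Leibniz-type splitting of the velocity of $s\mapsto\rho(y_{\alpha},\gamma)(s)\cdot[\phi(s)^{-1}]_{0}$ into a horizontal term plus a fundamental vector of the group action would itself require justification for diffeological tangent spaces; the paper's route avoids needing any such decomposition.
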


\begin{proof}
	Let us recall \cite{diffeology} that for any diffeological space $X$, a map $\psi:\RB_{+}\rightarrow X$ is smooth with respect to the subspace diffeology on $\RB_{+}\subset\RB$ (that is, with respect to the manifold-with-boundary structure on $\RB_{+}$) if and only if it admits an extension to a plot $\psi:U\rightarrow X$ defined on some open superset $U:=(-\epsilon,\infty)$ of $\RB_{+}$.  Since $\gamma$ has sitting instants, we can canonically extend it to a smooth leafwise path $\gamma:U\rightarrow M$ for which $\gamma|_{(-\epsilon,0]}\equiv\gamma(0)$.  Then to prove the lemma it is necessary and sufficient to prove that a plot $\gamma_{[f]_{x}}:U\rightarrow\DS_{\g}(M,\FF)$ satisfies items 1., 2., and 3. of Theorem \ref{parallel} on the larger domain $U$ if and only if there exists a plot $\tilde{f}:U\rightarrow\DS(M,\FF)$ satisfying the requirements listed in the statement of the lemma on the larger domain $U$.
	
	Suppose first that such a $\tilde{f}:U\rightarrow\DS(M,\FF)$ exists, and that $\gamma_{[f]_{x}}:U\rightarrow\DS_{\g}(M,\FF)$ is of the form given in Equation \eqref{form} on $U$.  That $\gamma_{[f]_{x}}$ satisfies items 1. and 2. of Theorem \ref{parallel} on $U$ is clear.  Note moreover that the properties of $\tilde{f}$ imply that for each $t\in U$, the function $\gamma_{[f]_{x}}|_{V_{t}}$ is equal to the plot $\rho(f_{t},\gamma|_{V_{t}}):V_{t}\rightarrow\DS_{\g}(M,\FF)$.  It follows immediately that $d\gamma_{[f]_{x}}$ takes values in $H$.
	
	Now suppose that $\gamma_{[f]_{x}}:U\rightarrow \DS_{\g}(M,\FF)$ is any smooth map satisfying items 1., 2. and 3. given in the statement.  Connectedness of $U$ together with items 1. and 2. of the statement imply that there is a smooth map $\tilde{f}:U\rightarrow \DS(M,\FF)$ such that
	\[
	\gamma_{[f]_{x}}(t) = (\gamma(t),[\tilde{f}(t)]_{\gamma(t)}),\hspace{7mm}t\in U.
	\]
	By item 3., and by definition of $H$, for each $t\in U$, there is a smooth leafwise path $\gamma_{t}$ contained in the domain of some $f_{t}\in\DS(M,\FF)$ such that, letting $\tau_{t}:s\mapsto s+t$ denote the translation map, we have
	\[
	(\gamma_{[f]_{x}}\circ\tau_{t})_{*}(\partial_{s}) = \rho(f_{t},\gamma_{t})_{*}(\partial_{s}).
	\]
	Now for $s$ near $t$, we have $\gamma_{[f]_{x}}\circ\tau_{t}(s) = (\gamma(t+s),[\tilde{f}(t+s)]_{\gamma(t+s)})$, while $\rho(f_{t},\gamma_{t})(s) = (\gamma_{t}(s),[f^{c}_{t}(s)]_{\gamma_{t}(s)})$, where $f_{t}^{c}$ is the constant plot $s\mapsto f_{t}$.  It follows then by Equation \eqref{equiv} that there is some diffeomorphism $\phi\in\Diff^{\loc}_{0}(\RB)$ and a neighbourhood $V_{t}$ of $t$ in $U$ for which we have
	\[
	\gamma(s) = \gamma_{t}\circ\phi\circ\tau_{-t}(s),\hspace{7mm} q\circ\tilde{f}(s) = q\circ f^{c}_{t}\circ\phi\circ\tau_{-t}(s) = q\circ f^{c}_{t}(s)
	\]
	for all $s\in V_{t}$.  Choosing $\dom(f_{t})$ sufficiently small, we then have $\gamma(s)\in\dom(f_{t})\subset\dom(\tilde{f}(s))$ and $\tilde{f}(s)|_{\dom(f_{t})} = f_{t}$ for all $s\in V_{t}$ as required.
\end{proof}

\begin{proof}[Proof of Theorem \ref{parallel}]
	We prove uniqueness by showing that any two paths of the form given in Equation \eqref{form} must be equal.  Indeed, suppose we are given two functions $\tilde{f}^{i}:\RB_{+}\rightarrow\DS(M,\FF)$, $i = 1,2$, with the properties given in the statement of Lemma \ref{formlem}.  For each $i = 1,2$ and each $t\in \RB_{+}$, let $V_{i}$ be the neighbourhood of $t$ and let $f_{t}^{i}\in\DS(M,\FF)$ be the function for which $\gamma(s)\in\dom(f_{t}^{i})\subset\dom(\tilde{f}^{i}(s))$ for which $\tilde{f}^{i}(s)|_{\dom(f^{i}_{t})} = f^{i}_{t}$.  We then have $[f^{1}_{0}]_{x} = [f]_{x} = [f^{2}_{0}]_{x}$, and moreover we therefore also have
	\[
	[\tilde{f}^{1}(t)]_{\gamma(t)} = [f^{1}_{0}]_{\gamma(t)} = [f^{2}_{0}]_{\gamma(t)} = [\tilde{f}^{2}(t)]_{\gamma(t)}
	\]
	for all $t$ contained in some neighbourhood of $0$ in $\RB_{+}$.  Repeating this process finitely many times along the compact subinterval $[0,d]$ of $\RB_{+}$ shows that $[\tilde{f}^{1}(t)]_{\gamma(t)} = [\tilde{f}^{2}(t)]_{\gamma(t)}$ for all $t\in\RB_{+}$, hence the associated solutions $\gamma_{[f]_{x}}^{i}:t\mapsto(\gamma(t),[\tilde{f}^{i}(t)]_{\gamma(t)})$ must also agree.
	
	To see existence, cover the image of $\tilde{\gamma}$ by a chain $\{U_{0},\dots,U_{k}\}$ of foliated charts, with $x\in U_{0}$ and with $\gamma(d)\in U_{k}$, and let $t_{1}<\cdots< t_{k}$ be a partition of $[0,d]\subset\RB_{+}$ such that $\gamma(t_{i})\in U_{i-1}\cap U_{i}$ for all $i=1,\dots,k$.  For $t\in \RB_{+}$ we define
	\[
	\dom(\tilde{f}(t)):=\begin{cases}
					\dom(y_{0}) = U_{0}&\text{ if $0\leq t\leq t_{1}$}\\
					\dom(y_{0,1}\circ y_{1})\subset U_{1}&\text{ if $t_{1}\leq t\leq t_{2}$}\\
					\vdots\\
					\dom(y_{0,1}\circ\cdots\circ y_{k-1,k}\circ y_{k})\subset U_{k}&\text{ if $t_{k}\leq t<\infty$}
					\end{cases},
	\]
	and for $y\in \dom(\tilde{f}(t))$ we define
	\[
	\tilde{f}(t)(y):=\begin{cases}
				y_{0}(y)&\text{ if $0\leq t \leq t_{1}$}\\
				y_{0,1}\circ y_{1}(y)&\text{ if $t_{1}\leq t\leq t_{2}$}\\
				\vdots\\
				y_{0,1}\circ\cdots\circ y_{k-1,k}\circ y_{k}(y)&\text{ if $t_{k}\leq t<\infty$}
				\end{cases}.
	\]
	Clearly then $\tilde{f}:\RB_{+}\rightarrow\DS(M,\FF)$ satisfies the properties required in Lemma \ref{formlem} for the associated function $\gamma_{[f]_{x}}:t\mapsto(\gamma(t),[\tilde{f}(t)]_{\gamma(t)})$ to be a solution.
\end{proof}

Recall now from Definition \ref{Hpath} that we can associate to the partial connection $H$ on $\DS_{\g}(M,\FF)$ the $H$-path category $\PP_{H}(\DS_{\g}(M,\FF))$, consisting of those elements of $\PP(\DS_{\g}(M,\FF))$ whose tangents are contained in $H$.  Theorem \ref{parallel} then effectively says that we have a lifting map $\PP_{T\FF}(M)\times_{s,\pi_{\DS_{\g}(M,\FF)}}\DS_{\g}(M,\FF)\rightarrow \PP_{H}(\DS_{\g}(M,\FF))$.

\begin{thm}\label{lifting}
	The lifting map $L_{\FF}:\PP_{T\FF}(M)\times_{s,\pi_{\DS_{\g}(M,\FF)}}\DS_{\g}(M,\FF)\rightarrow \PP_{H}(\DS_{\g}(M,\FF))$ defined by
	\[
	L_{\FF}\big((\gamma,d),(x,[f]_{x})\big):=(\gamma_{[f]_{x}},d),
	\]
	where $\gamma_{[f]_{x}}$ is given by Theorem \ref{parallel}, is smooth.
\end{thm}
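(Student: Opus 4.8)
The plan is to check smoothness of $L_{\FF}$ directly against plots, realising the lift as a parametrised version of the chart-chain construction used to prove Theorem \ref{parallel}. Let $\varphi:U\rightarrow\PP_{T\FF}(M)\times_{s,\pi_{\DS_{\g}(M,\FF)}}\DS_{\g}(M,\FF)$ be a plot; I must show that $L_{\FF}\circ\varphi$ is a plot of $\PP_{H}(\DS_{\g}(M,\FF))$. Since the latter sits in $C^{\infty}(\RB_{+},\DS_{\g}(M,\FF))\times\RB_{+}$ with the subspace diffeology, and since item 3 of Theorem \ref{parallel} already guarantees that every lifted path has tangents in $H$, it suffices to check two things: that the duration component $u\mapsto d_{u}$ is smooth, and that the map $\widetilde{L_{\FF}\circ\varphi}:(u,t)\mapsto\gamma_{u,[f_{u}]_{x_{u}}}(t)$ is smooth from $U\times\RB_{+}$ to $\DS_{\g}(M,\FF)$. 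The duration is untouched by $L_{\FF}$, so it equals the smooth duration of the first factor of $\varphi$, and the first point is immediate.

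For the second point, smoothness is local in $U$, and by sitting instants the lift is eventually constant in $t$, so I may fix $u_{0}\in U$ and a compact interval $[0,d]$ and work on a neighbourhood $V$ of $u_{0}$. Writing $\varphi(u)=\big((\gamma_{u},d_{u}),(x_{u},[f_{u}]_{x_{u}})\big)$, the plot hypothesis supplies a smooth family of leafwise paths, i.e.\ a smooth map $(u,t)\mapsto\gamma_{u}(t)$, together with (after shrinking $V$) plots $\tilde{x}:V\rightarrow M$ and $\tilde{h}:V\rightarrow\DS(M,\FF)$ representing the initial data as $(x_{u},[f_{u}]_{x_{u}})=(\tilde{x}(u),[\tilde{h}(u)]_{\tilde{x}(u)})$. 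Next I would uniformise the chart chain: choosing a chain $\{U_{0},\dots,U_{k}\}$ of foliated charts covering $\gamma_{u_{0}}([0,d])$ and a partition $0=t_{0}<\cdots<t_{k}=d$ as in the existence proof, continuity of $(u,t)\mapsto\gamma_{u}(t)$ and compactness of $[0,d]$ let me shrink $V$ so that $\gamma_{u}([t_{i-1},t_{i}])\subset U_{i}$ for every $u\in V$ and every $i$.

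I would then assemble the parametrised lift by imitating the explicit formula of the existence proof. On the $i$-th piece $V\times[t_{i-1},t_{i}]$ the transported transverse coordinate is the fixed distinguished function $y_{0,1}\circ\cdots\circ y_{i-1,i}\circ y_{i}$, whose transition factors are independent of $u$. To incorporate the varying initial germ I post-compose by the germ $\Phi_{u}$ of the local diffeomorphism of $\RB^{q}$ determined by $[\Phi_{u}\circ y_{0}]_{\tilde{x}(u)}=[\tilde{h}(u)]_{\tilde{x}(u)}$, normalised as in the proof that $\pi_{\DS_{\g}(M,\FF)}$ is a subduction so that the resulting functions vanish at the base point. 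Because $y_{0}$ is a fixed submersion and $\tilde{x},\tilde{h}$ are plots, $u\mapsto\Phi_{u}$ is smooth into $\g\Diff^{\loc}_{0}(\RB^{q})$; hence each $(u,m)\mapsto\Phi_{u}\big((y_{0,1}\circ\cdots\circ y_{i-1,i}\circ y_{i})(m)\big)$ is smooth on a common domain, defining a plot $\tilde{f}:V\times(t_{i-1}-\epsilon,t_{i}+\epsilon)\rightarrow\DS(M,\FF)$. The cocycle property of Definition \ref{charts} ensures these pieces agree as germs across the partition points $t_{i}$, so they glue to a single plot, and $(u,t)\mapsto(\gamma_{u}(t),[\tilde{f}(u,t)]_{\gamma_{u}(t)})$ is therefore a plot of $\DS_{\g}(M,\FF)$. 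By Lemma \ref{formlem} this family consists precisely of the unique lifts produced by Theorem \ref{parallel}, so it equals $\widetilde{L_{\FF}\circ\varphi}$, completing the argument.

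The main obstacle is the parametrised assembly in the last paragraph: I must verify both that the twisting germ $\Phi_{u}$ depends smoothly on $u$ --- which rests on the functional diffeology of $\DS(M,\FF)$ and on $\tilde{x},\tilde{h}$ being plots --- and that the piecewise formulas match as germs at the partition points, so that the local data genuinely defines a plot. The compatibility at the overlaps is exactly where the principality of $H$ (Proposition \ref{Hcon}) and the cocycle property enter, guaranteeing that transporting the initial twist commutes with passing between consecutive charts.
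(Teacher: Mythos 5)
Your proposal is correct and follows essentially the same route as the paper's proof: localise near $u_{0}$, fix a chain of foliated charts covering the nearby paths, normalise the initial transverse coordinate by a translation, encode the varying initial germ as a smoothly varying element of $\g\Diff^{\loc}_{0}(\RB^{q})$ post-composed with the chain of transverse coordinate changes, and identify the resulting smooth family with the unique lifts via Lemma \ref{formlem}. The only cosmetic differences are that the paper uses $u$-dependent smooth partition functions $\tilde{t}_{i}(u)$ rather than a fixed partition, and that the gluing at the partition points rests on the cocycle property alone (the principality of $H$ from Proposition \ref{Hcon} is not needed there).
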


\begin{proof}
	Let $p:U\rightarrow\PP_{T\FF}(M)$ and $\rho:V\rightarrow \DS_{\g}(M,\FF)$ be plots.  We must show that the map
	\begin{equation}\label{Lf}
	U\times_{s\circ p,\pi_{\DS_{\g}(M,\FF)}\circ\rho}V\ni(u,v)\mapsto L_{\FF}(p(u),\rho(v))\in\PP_{H}(\DS_{\g}(M,\FF))
	\end{equation}
	is smooth.  Write $p(u) = (\tilde{\gamma}(u),\tilde{d}(u))$ for $u\in U$.  Taking $V$ to be sufficiently small, we may assume that there are plots $\tilde{x}:V\rightarrow M$ and $\tilde{f}:V\rightarrow \DS(M,\FF)$ such that $\rho(v) = (\tilde{x}(v),[\tilde{f}(v)]_{\tilde{x}(v)})$ for all $v\in V$.
	
	Fix $u_{0}$ in $U$, and let $U_{0},\dots,U_{k}$ be a chain of foliated charts covering the image of $\tilde{\gamma}(u_{0})$ such that $U_{0}$ contains $\tilde{\gamma}(u_{0})(0)$ and such that $y_{0}(\tilde{\gamma}(u_{0})(0)) = 0$.  Then there is a neighbourhood $U'$ of $u_{0}$ in $U$ such that for all $u\in U'$, the image of $\tilde{\gamma}(u)$ is also contained in the chain of charts $U_{0},\dots,U_{k}$.  For each $u\in U'$ we let $\tilde{\tau}(u)\in\Diff(\RB^{q})$ be the translation operator $\vec{x}\mapsto\vec{x}-y_{0}(\tilde{\gamma}(u)(0))$ on $\RB^{q}$, so that $\tilde{\tau}:U'\rightarrow\Diff(\RB^{q})$ is a plot with respect to the functional diffeology on $\Diff(\RB^{q})$.  We thereby obtain new transverse coordinates $\tilde{y}_{0}(u)$ on $U_{0}$ defined for each $u\in U'$ by
	\[
	\tilde{y}_{0}(u):=\tilde{\tau}(u)\circ y_{0}.
	\]
	The resulting map $\tilde{y}_{0}:U'\rightarrow\DS(M,\FF)$ is a plot of $\DS(M,\FF)$.
	
	Now for each $(u,v)\in U'\times_{s\circ p,\pi_{\DS_{\g}(M,\FF)}\circ\rho}V$, there is an element $\tilde{\varphi}(u,v)\in\Diff^{\loc}_{0}(\RB^{q})$ such that $\tilde{f}(v) = \tilde{\varphi}(u,v)^{-1}\circ\tilde{y}_{0}(u)$.  Smoothness of $\tilde{f}$ and $\tilde{y}_{0}$ guarantees that $\tilde{\varphi}:U'\times_{s\circ p,\pi_{\DS_{\g}(M,\FF)}\circ\rho}V\rightarrow \Diff^{\loc}_{0}(\RB^{q})$ is smooth also.  We now define a smooth map $\tilde{g}:U'\times_{s\circ p,\pi_{\DS_{\g}(M,\FF)}\circ\rho}V\times\RB_{+}\rightarrow\DS(M,\FF)$ in a similar manner to the construction used in proving existence for Theorem \ref{parallel}.  More precisely, we choose a family $\tilde{t}_{1},\dots,\tilde{t}_{k}:U'\rightarrow\RB$ of smooth maps such that
	\[
	0<\tilde{t}_{1}(u)<\cdots<\tilde{t}_{k}(u) < d(u)
	\]
	for all $u\in U'$, and such that $\tilde{\gamma}(u)(\tilde{t}_{i}(u))\in U_{i-1}\cap U_{i}$ for all $i = 1,\dots,k$.  For $(u,v,t)\in U'\times_{s\circ p,\pi_{\DS_{\g}(M,\FF)}}V\times\RB_{+}$ we define
	\[
	\dom\big(\tilde{g}(u,v,t)\big):=\begin{cases}
									\dom(y_{0}) = U_{0}&\text{ if $0\leq t\leq t_{1}(u)$}\\
									\dom(y_{0,1}\circ y_{1})\subset U_{1}&\text{ if $t_{1}(u)\leq t\leq t_{2}(u)$}\\
									\vdots\\
									\dom(y_{0,1}\circ\cdots\circ y_{k-1,k}\circ y_{k})\subset U_{k}&\text{ if $t_{k}(u)\leq t<\infty$}
									\end{cases},
	\]
	and then for $y\in\dom(\tilde{g}(u,v,t))$ we define
	\[
	\tilde{g}(u,v,t)(y):=\begin{cases}
							\tilde{\varphi}(u,v)^{-1}\circ\tilde{\tau}(u)\circ y_{0}(y)&\text{ if $0\leq t\leq t_{1}(u)$}\\
							\tilde{\varphi}(u,v)^{-1}\circ\tilde{\tau}(u)\circ y_{0,1}\circ y_{1}(y)&\text{ if $t_{1}(u)\leq t\leq t_{2}(u)$}\\
							\vdots\\
							\tilde{\varphi}(u,v)^{-1}\circ\tilde{\tau}(u)\circ y_{0,1}\circ\cdots\circ y_{k-1,k}\circ y_{k}(y)&\text{ if $t_{k}(u)\leq t<\infty$}
							\end{cases}.
	\]
	Then $\tilde{g}:U'\times_{s\circ p,\pi_{\DS_{\g}(M,\FF)}\circ\rho}V\times\RB_{+}\rightarrow\DS(M,\FF)$ is a smooth map, and for each fixed $(u,v)\in U'\times_{s\circ p,\pi_{\DS_{\g}(M,\FF)}\circ\rho}V$ the function $\tilde{g}(u,v,\cdot):\RB_{+}\rightarrow\DS(M,\FF)$ satisfies the property described in Lemma \ref{formlem}.  Moreover by construction we have
	\[
	L_{\FF}(p(u),\rho(v))(t) = (\tilde{\gamma}(u)(t),[\tilde{g}(u,v,t)]_{\tilde{\gamma}(t)})
	\]
	for all $(u,v,t)\in U'\times_{s\circ p,\pi_{\DS_{\g}(M,\FF)}\circ\rho}V\times\RB_{+}$, from which it follows by the smoothness of $\tilde{g}$ and $\tilde{\gamma}$ that the map given in Equation \eqref{Lf} is smooth.  Hence $L_{\FF}$ is smooth.
\end{proof}

Using Theorem \ref{lifting}, we can prove that the parallel transport map given by Theorem \ref{parallel} gives rise to define a canonical leafwise transport functor.

\begin{thm}\label{Tf}
	Let $(M,\FF)$ be a foliated manifold of codimension $q$.  Then the formula
	\[
	T_{\FF}(\gamma,d)(\gamma(0),[f]_{\gamma(0)}):=\gamma_{[f]_{\gamma(0)}}(d),\hspace{7mm}(\gamma,d)\in\PP_{T\FF}(M),\,(\gamma(0),[f]_{\gamma(0)})\in\DS_{\g}(M,\FF)
	\]
	defines a leafwise transport functor $T_{\FF}:\PP_{T\FF}(M)\rightarrow\Aut(\pi_{\DS_{\g}(M,\FF)})$.
\end{thm}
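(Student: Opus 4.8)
The plan is to check three things in turn: that $T_{\FF}$ takes values in $\mor(\Aut(\pi_{\DS_{\g}(M,\FF)}))$ with the correct source and range, that it is a functor, and that it is smooth for the functional diffeology; the smooth lifting map demanded by Definition \ref{transportfunctor} is then furnished directly by $L_{\FF}$ of Theorem \ref{lifting}, via the factorisation $T_{\FF}(\gamma,d)(b) = r\big(L_{\FF}((\gamma,d),b)\big)$, where $r$ is the (smooth) range map on $\PP_{H}(\DS_{\g}(M,\FF))$.

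First I would fix $(\gamma,d)\in\PP_{T\FF}(M)$ and set $x=\gamma(0)$, $y=\gamma(d)$. By item 2 of Theorem \ref{parallel}, $\gamma_{[f]_{x}}(d)$ lies over $y$, so $T_{\FF}(\gamma,d)$ sends the fibre over $x$ into that over $y$. To see it is a diffeomorphism I would exhibit $T_{\FF}(\gamma^{-1},d)$ as a two-sided inverse: the reversed pair $(\gamma^{-1},d)$ of Proposition \ref{inverses} is again leafwise, and for any lift $\gamma_{[f]_{x}}$ the reversal $t\mapsto\gamma_{[f]_{x}}(d-t)$ is tangent to $H$ (being a linear subbundle, $H$ is closed under negation of tangent vectors) and projects to $\gamma^{-1}$; the uniqueness clause of Theorem \ref{parallel} then forces it to be the lift of $\gamma^{-1}$ based at $\gamma_{[f]_{x}}(d)$, whence $T_{\FF}(\gamma^{-1},d)\circ T_{\FF}(\gamma,d)=\id$ and symmetrically the reverse. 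Smoothness of $T_{\FF}(\gamma,d)$ and of its inverse on fibres follows from $T_{\FF}(\gamma,d)=r\circ L_{\FF}$ together with smoothness of $L_{\FF}$ and of $r$.

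For functoriality, the constant path $(\gamma_{x},0)$ lifts to the constant path (which trivially satisfies items 1--3 of Theorem \ref{parallel}), so identities map to identities; and for composable $(\gamma_{1},d_{1}),(\gamma_{2},d_{2})$ I would observe that the concatenation of the lift of $\gamma_{2}$ based at $(x,[f]_{x})$ with the lift of $\gamma_{1}$ based at its endpoint is itself tangent to $H$ (the sitting instants guarantee smoothness at the join) and projects to $\gamma_{1}\gamma_{2}$; uniqueness identifies it with $(\gamma_{1}\gamma_{2})_{[f]_{x}}$, giving $T_{\FF}(\gamma_{1}\gamma_{2},d_{1}+d_{2}) = T_{\FF}(\gamma_{1},d_{1})\circ T_{\FF}(\gamma_{2},d_{2})$.

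Finally, for smoothness I would take a plot $\phi\colon U\to\PP_{T\FF}(M)$ and verify the three conditions \eqref{f1}--\eqref{f3} for $T_{\FF}\circ\phi$. Condition \eqref{f3} is immediate, since $(r,s)\circ T_{\FF}$ agrees with the source and range on $\PP_{T\FF}(M)$, already known to be smooth. For \eqref{f1} the map $(u,b)\mapsto T_{\FF}(\phi(u))(b)=r\big(L_{\FF}(\phi(u),b)\big)$ factors through the plot $(u,b)\mapsto(\phi(u),b)$ of the fibre product followed by the smooth maps $L_{\FF}$ and $r$; condition \eqref{f2} is handled identically after replacing $\phi$ by $\iota\circ\phi$, using that $\iota$ restricts to a smooth self-map of $\PP_{T\FF}(M)$ (Proposition \ref{inverses}) and that $T_{\FF}(\gamma,d)^{-1}=T_{\FF}(\gamma^{-1},d)$. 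The factorisation $T_{\FF}=r\circ L_{\FF}$ with $L_{\FF}$ smooth is then precisely the data required by Definition \ref{transportfunctor}. The main obstacle is conceptual rather than computational: it is the repeated, careful application of the uniqueness clause of Theorem \ref{parallel}, since all the genuinely groupoid-theoretic content---invertibility and functoriality---rests on identifying reversals and concatenations of lifts with the lifts of reversals and concatenations.
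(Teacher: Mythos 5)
Your proposal is correct and follows essentially the same route as the paper: functoriality (and invertibility of the fibre maps) via the uniqueness clause of Theorem \ref{parallel} applied to reversals and concatenations of lifts, and smoothness via the factorisation $T_{\FF} = r\circ L_{\FF}$ with $L_{\FF}$ the smooth lifting map of Theorem \ref{lifting}. You simply make explicit several steps the paper leaves implicit, such as the verification of conditions \eqref{f1}--\eqref{f3} and the identification of $T_{\FF}(\gamma^{-1},d)$ as the inverse.
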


\begin{proof}
	That $T_{\FF}$ is a functor follows immediately from item 2. and uniqueness in Theorem \ref{parallel}.  Thus we need only show that it is smooth.  However this follows from Lemma \ref{lifting}.  Indeed, the functor $T_{\FF}$ is equal to the composite
	\[
	T_{\FF} = r\circ L_{\FF},
	\]
	where $L_{\FF}:\PP_{T\FF}(M)\times_{s,\pi_{\DS_{\g}(M,\FF)}}\DS_{\g}(M,\FF)\rightarrow\PP_{H}(\DS_{\g}(M\FF))$ is the smooth lifting map of Lemma \ref{lifting}, and where $r:\PP_{H}(\DS_{\g}(M,\FF))\rightarrow \DS_{\g}(M,\FF)$ is simply the smooth range map $(\gamma,d)\mapsto\gamma(d)$.  As the composite of two smooth maps, $T_{\FF}$ is itself smooth.
\end{proof}

\begin{rmk}\label{wink1}\normalfont
	The reader who is familiar with J. Phillips' paper \cite{holimper} will notice that our leafwise transport functor given in Theorem \ref{Tf} is essentially a categorification of Phillips' \cite[p. 160, Proposition]{holimper} using the language of diffeology.  Indeed, Phillips' work is the central inspiration for our constructions.
	
	Let us briefly discuss how our constructions relate to those of Winkelnkemper by comparing the functor of Theorem \ref{Tf} with Winkelnkemper's holonomy diffeomorphisms (see Proposition \ref{holdiff}).  The key point is that, given $(\gamma,d)\in\PP_{T\FF}(M)$ and a germ $[f]_{x}$ of a distinguished function at $x = s(\gamma,d)$, the element $T_{\FF}(\gamma,d)\big(x,[f]_{x}\big)$ of $\DS_{\g}(M,\FF)_{y = r(\gamma,d)}$ is determined uniquely by the input data, while on the other hand Winkelnkemper requires a \emph{choice} of germ $[g]_{y}$ which is related to $[f]_{x}$ by the germ of the holonomy diffeomorphism $H_{\gamma}^{\UU}$ associated to $\gamma$ and to any chain $\UU$ of charts whose initial and terminal transverse coordinate maps are $f$ and $g$ respectively.  By maximality of the foliated atlas, we can always find an element $[\varphi]_{0}\in\Diff_{0}^{\loc}(\RB^{q})$ such that $(y,[\varphi]^{-1}_{0} [g]_{y}) = T_{\FF}(\gamma,d)\big(x,[f]_{x}\big)$. Thus letting $\cdot\varphi$ be the map $[g]_{y}\mapsto[\varphi]_{0}^{-1}[g]_{y}$, and letting $T_{W}(\gamma)$ be the map $[f]_{x}\mapsto [f]_{x}[(H^{\UU}_{\gamma})^{-1}]_{y}$, then the diagram
	\begin{center}
		\begin{tikzcd}
			\DS_{\g}(M,\FF)_{x} \ar[rr,"T_{W}(\gamma)"] \ar[d,"\id"] & & \DS_{\g}(M,\FF)_{y} \ar[d,"\cdot\varphi"] \\ \DS_{\g}(M,\FF)_{x} \ar[rr,"T_{\FF}(\gamma)"] & & \DS_{\g}(M,\FF)_{y}
		\end{tikzcd}
	\end{center}
	commutes.  Thus Winkelnkemper's ``holonomy diffeomorphism" differs from a true parallel transport map by a gauge transformation that is determined by the choice of chain used to define the diffeomorphism.  Of course by Proposition \ref{holdiff}, this choice does not affect the holonomy class of $\gamma$.
\end{rmk}

The diffeological groupoid $\HH(T_{\FF})$ associated to the transport functor $T_{\FF}:\PP_{T\FF}(M)\rightarrow\Aut(\pi_{\DS_{\g}(M,\FF)})$ is isomorphic, as a diffeological groupoid, to the Winkelnkemper-Phillips holonomy groupoid.

\begin{thm}\label{coincide}
	Let $(M,\FF)$ be a foliated manifold, let $\HH(M,\FF)$ be its Winkelnkemper-Phillips holonomy groupoid, and let $\HH(T_{\FF})$ be the holonomy groupoid associated to the leafwise transport functor $T_{\FF}$ of Theorem \ref{Tf}.  Then the map $\iota:\HH(T_{\FF})\rightarrow\HH(M,\FF)$ defined by
	\[
	\iota([(\gamma,d)]):=(\gamma(d),\gamma(0),[\gamma])
	\]
	is an isomorphism of diffeological groupoids.
\end{thm}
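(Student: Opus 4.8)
The plan is to establish the four properties of $\iota$ separately: that it is well-defined, that it is a bijective morphism of groupoids, and that both $\iota$ and $\iota^{-1}$ are smooth. The organising principle for the algebraic content is the equivariance of parallel transport. Because $H$ is a \emph{principal} partial connection (Proposition \ref{Hcon}), uniqueness in Theorem \ref{parallel} forces $T_{\FF}(\gamma,d)\big((x,[f]_{x})\cdot[\varphi]_{0}\big) = T_{\FF}(\gamma,d)(x,[f]_{x})\cdot[\varphi]_{0}$ for all $[\varphi]_{0}\in G:=\g\Diff^{\loc}_{0}(\RB^{q})$, so that each $T_{\FF}(\gamma,d)$ is a $G$-equivariant diffeomorphism of fibres. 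Since the fibres of $\pi_{\DS_{\g}(M,\FF)}$ are $G$-torsors, two transports $T_{\FF}(\gamma_{1},d_{1})$ and $T_{\FF}(\gamma_{2},d_{2})$ with the same source and range coincide if and only if they agree on a single germ $[f]_{x}$; this reduces every comparison of transports to a comparison of the single transported germs $\gamma_{1,[f]_{x}}(d_{1})$ and $\gamma_{2,[f]_{x}}(d_{2})$.

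The core of the argument is then to show that $(\gamma_{1},d_{1})\sim_{T_{\FF}}(\gamma_{2},d_{2})$ if and only if $\gamma_{1}$ and $\gamma_{2}$ have the same endpoints and the same Winkelnkemper holonomy germ, which simultaneously yields well-definedness and injectivity of $\iota$. Equality of $T_{\FF}(\gamma_{1},d_{1})$ and $T_{\FF}(\gamma_{2},d_{2})$ already forces the endpoints to agree, so suppose $\gamma_{1},\gamma_{2}$ run from $x$ to $y$. By item 1 of Proposition \ref{props} the holonomy germ condition is independent of the covering chain provided the initial and terminal charts are fixed, so we may choose a common initial chart $(U_{0},x_{0},y_{0})$ and a common terminal chart $(U_{k},x_{k},y_{k})$ for both paths. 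By Remark \ref{wink1} we then have $T_{\FF}(\gamma_{i}) = (\cdot\,\varphi)\circ T_{W}(\gamma_{i})$ for $i=1,2$ with a single gauge $[\varphi]_{0}\in G$, since the gauge depends only on the terminal chart. As $\cdot\,\varphi$ is a bijection, $T_{\FF}(\gamma_{1}) = T_{\FF}(\gamma_{2})$ if and only if $T_{W}(\gamma_{1}) = T_{W}(\gamma_{2})$, and the latter is exactly the statement that the germs of $H^{\UU_{1}}_{\gamma_{1}}$ and $H^{\UU_{2}}_{\gamma_{2}}$ coincide, i.e.\ that $[\gamma_{1}] = [\gamma_{2}]$ in the sense of Definition \ref{winkgpd}.

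Surjectivity of $\iota$ is immediate: given $(y,x,[\gamma])\in\HH(M,\FF)$, item 2 of Proposition \ref{props} together with the introduction of sitting instants (Definition \ref{sitting}) lets us choose a representative of $[\gamma]$ lying in $\PP_{T\FF}(M)$, whose image under $\iota$ is $(y,x,[\gamma])$. That $\iota$ is a groupoid morphism is a routine matching of conventions: the Moore concatenation $(\gamma_{1},d_{1})\circ(\gamma_{2},d_{2}) = (\gamma_{1}\gamma_{2},d_{1}+d_{2})$ has startpoint $\gamma_{2}(0)$ and endpoint $\gamma_{1}(d_{1})$, so that $\iota$ intertwines it with the multiplication $(z,y,[\gamma])\cdot(y,x,[\eta]) = (z,x,[\gamma\eta])$ of $\HH(M,\FF)$, the sitting instants guaranteeing that the concatenated path is again a smooth leafwise path representing the concatenated holonomy class; compatibility with source, range, units and the inversion induced by $\iota$ of Proposition \ref{inverses} is checked directly.

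The main obstacle is smoothness of $\iota$ and of $\iota^{-1}$, which requires reconciling the diffeological quotient structure on $\HH(T_{\FF})$ with the explicit manifold charts $V_{(y,x,\gamma,\UU)}$ on $\HH(M,\FF)$ of Definition \ref{winkgpd}. For smoothness of $\iota$, I would take a plot of $\HH(T_{\FF})$, lift it locally through the subduction $\PP_{T\FF}(M)\to\HH(T_{\FF})$ to a plot $p$ of $\PP_{T\FF}(M)$, and show that the Winkelnkemper coordinates $(x_{k}(b),x_{0}(a),y_{0}(a))$ of the image depend smoothly on the plot parameter; here the smoothness of the lifting map $L_{\FF}$ of Theorem \ref{lifting} and the explicit chainwise formula for the transported function $\tilde{f}$ from the proof of Theorem \ref{parallel} provide the needed smooth dependence, since the holonomy diffeomorphism governing the coordinate change is recovered from this same chainwise data. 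For smoothness of $\iota^{-1}$, I would conversely produce over each chart $V_{(y,x,\gamma,\UU)}$ a smooth family of leafwise paths with sitting instants realising the prescribed coordinates $(x_{k}(b),x_{0}(a),y_{0}(a))$, essentially by reversing the construction of $\gamma_{z}$ in Equation \eqref{gammaz} and smoothing out the resulting concatenations; pushing such a family to $\HH(T_{\FF})$ exhibits $\iota^{-1}\circ(\text{plot})$ as a plot. I expect the bookkeeping of ensuring these path families are jointly smooth in the plot parameter and the transversal coordinate, while retaining sitting instants, to be the most delicate part of the proof.
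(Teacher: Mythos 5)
Your proposal follows essentially the same route as the paper's proof: the identification of the two equivalence relations is delegated to the gauge discussion of Remark \ref{wink1}, smoothness of $\iota$ is obtained by locally lifting plots through the quotient $\PP_{T\FF}(M)\rightarrow\HH(T_{\FF})$ and observing that the Winkelnkemper coordinates $(x_{k}(b),x_{0}(a),y_{0}(a))$ depend smoothly on the lifted plot (in fact only on its endpoints, so the chainwise formula for $L_{\FF}$ is not even needed there), and smoothness of $\iota^{-1}$ is obtained by assembling over each chart $V_{(y,x,\gamma,\UU)}$ a smooth family of concatenations of plaque paths with the perturbed paths $\gamma_{z}$ of Equation \eqref{gammaz}, exactly as the paper does. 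The one step to tighten is your claim that the gauge $[\varphi]_{0}$ of Remark \ref{wink1} ``depends only on the terminal chart'' and is therefore common to $\gamma_{1}$ and $\gamma_{2}$: as defined there it also encodes the holonomy germ of $\gamma_{i}$ itself, so arguing this way is circular, and the equivalence $T_{\FF}(\gamma_{1})=T_{\FF}(\gamma_{2})\Leftrightarrow[H^{\UU_{1}}_{\gamma_{1}}]=[H^{\UU_{2}}_{\gamma_{2}}]$ should instead be read off directly from the chainwise formula $T_{\FF}(\gamma_{i},d_{i})(x,[y_{0}]_{x})=(y,[y_{0}\circ(H^{\UU_{i}}_{\gamma_{i}})^{-1}]_{y})$ together with your (correct) equivariance reduction to a single germ.
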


\begin{proof}
	Our discussion in Remark \ref{wink1} shows that two paths are equivalent in the sense of Winkelnkemper if and only if they are $T_{\FF}$-equivalent, and it follows immediately that the identification $[(\gamma,d)]\mapsto(\gamma(d),\gamma(0),[\gamma])$ is an isomorphism of groupoids.  We need only show that it is smooth with smooth inverse.
	
	Suppose first that we have a plot $\rho:U\rightarrow \HH(T_{\FF})$.  We must show that the composite $\iota\circ\rho$ is a smooth function.  Fixing $u_{0}\in U$, there is an open neighbourhood $V$ of $u_{0}$ such that $\iota\circ\rho(V)$ is contained in the domain of some coordinate neighbourhood $(V_{(y,x,\gamma,\UU)},\psi)$ of $\HH(M,\FF)$ (see the construction immediately preceding Definition \ref{winkgpd}), and we then need to show that $\psi\circ\iota\circ\rho|_{V}$ is smooth as a map between Euclidean spaces.  Let $(U_{0},x_{0},y_{0})$ and $(U_{k},x_{k},y_{k})$ be the initial and terminal charts of $\UU$ respectively.  Without loss of generality we can assume that there is a plot $\tilde{\rho}:V\rightarrow\PP_{T\FF}(M)$ such that $\rho|_{V} = q\circ\tilde{\rho}$, where $q:\PP_{T\FF}(M)\rightarrow\HH(T_{\FF})$ denotes the quotient map.  Smoothness of the map $(t,v)\mapsto\tilde{\rho}(v)(t)$ then implies that the maps $v\mapsto r(\tilde{\rho}(v))$ and $v\mapsto s(\tilde{\rho}(v))$ are smooth, hence
	\[
	\psi\circ\iota\circ\rho:v\mapsto\big(x_{k}(r(\tilde{\rho}(v))), x_{0}(s(\tilde{\rho}(v))), y_{0}(s(\tilde{\rho}(v)))\big)
	\]
	is smooth also.
	
	Now suppose that $\rho':U\rightarrow\HH(M,\FF)$ is a plot.  Fixing $u_{0}\in U$ we must find a neighbourhood $V$ of $u_{0}$ and a plot $\tilde{\rho}':V\rightarrow\PP_{T\FF}(M)$ such that $q\circ\tilde{\rho}' = \iota^{-1}\circ\rho'$.  Let $V$ be any neighbourhood of $u_{0}$ such that $\rho'(V)$ is contained in a coordinate neighbourhood $(V_{(y,x,\gamma,\UU)},\psi)$ of $\HH(M,\FF)$, and let the initial and terminal plaques of $\UU$ be denoted $(U_{0},x_{0},y_{0})$ and $(U_{k},x_{k},y_{k})$ respectively.  Let $(a,b,c)$ be the smooth functions on $V$ defined by $(a(v),b(v),c(v)) = \psi\circ\rho'(v)$ for all $v\in V$.  There are now four functions from $V$ into $M$ that are of interest to us.  These are
	\[
	x(v):=\psi_{0}^{-1}(b(v),c(v)),\hspace{3mm} z(v):=\psi_{0}^{-1}(x_{0}(\gamma(0)),c(v))
	\]
	in the plaque $P_{0,c(v)}$ determined by $c(v)$ in $U_{0}$, and 
	\[
	w(v):=H_{\gamma}^{\UU}(z(v)),\hspace{3mm} y(v):=\psi_{k}^{-1}(b(v),y_{k}(w(v)))
	\]
	in the plaque $P_{k,c(v)}$ determined by $c(v)$ in $U_{k}$.  For each $v\in V$ we define $\gamma_{z(v)}$ exactly as in Equation \eqref{gammaz}, we let $s_{x(v)}$ be the straight line path (with sitting instants) in $P_{0,c(v)}$ joining $x(v)$ to $z(v)$, and let $s_{y(v)}$ be the straight line path (with sitting instants) in $P_{k,c(v)}$ joining $w(v)$ to $y(v)$.  Then the map $\tilde{\rho}':V\rightarrow\PP_{T\FF}(M)$ which sends $v$ to the concatenation $s_{y(v)}\gamma_{z(v)}s_{x(v)}$ is a plot satisfying $q\circ\tilde{\rho}' = \iota^{-1}\circ\rho'$.
\end{proof}

As a consequence of Theorem \ref{coincide} we give the following definition.

\begin{defn}\label{holonomy}
	Let $(M,\FF)$ be a foliated manifold.  We refer to the diffeological groupoid $\HH(T_{\FF})$ associated to the leafwise transport functor $T_{\FF}$ as the \textbf{holonomy groupoid of $(M,\FF)$}, and denote it simply by $\HH(M,\FF)$.
\end{defn}

We will see in the next section how similar techniques can be used to define transport functors and holonomy groupoids for foliated bundles.

\section{The holonomy groupoid of a foliated bundle}

\subsection{The fibre holonomy groupoid of a foliated bundle}

In this subsection we consider a foliated bundle $\pi_{B}:B\rightarrow M$ (see Definition \ref{folbund}), and denote by $\FF$ the induced foliation of $M$.  Leafwise paths in $M$ can be lifted to leafwise paths in $B$ using the partial connection in $\pi_{B}$ in the classical fashion, and these liftings will define a canonical leafwise transport functor.  We recall the following simple application of the Picard-Lindel\"{o}f theorem, phrased in our diffeological categorical terminology.

\begin{thm}\cite[Theorem 9.8]{natopdiffgeom}\label{transport}
Let $\pi_{B}:B\rightarrow M$ be a foliated bundle over $M$, and let $(\gamma,d)\in\PP_{T\FF}(M)$.  To each $b\in B_{\gamma(0)}$, there exists a unique curve $\gamma_{b}:\RB_{+}\rightarrow B$, such that
\begin{enumerate}
	\item $\gamma_{b}(0) = b$,
	\item $\pi_{B}(\gamma_{b}(t)) = \gamma(t)$ for all $t\in\RB_{+}$, and
	\item $d\gamma_{b}:T\RB_{+}\rightarrow TB$ takes values in $T\FF_{B}$.
\end{enumerate}
If moreover $\gamma$ depends smoothly on some further parameters, then so too does $\gamma_{b}$.  That is, the lifting function $L_{B}^{0}:\PP_{T\FF}(M)\times_{s,\pi_{B}}B\rightarrow \PP_{T\FF_{B}}(B)$ defined by
\[
L_{B}^{0}\big((\gamma,d),b\big):=(\gamma_{b},d)
\]
is smooth.\qed
\end{thm}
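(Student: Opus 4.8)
The plan is to recast the three conditions as a first-order ordinary differential equation and then solve it, exploiting the fact that, since $\pi_{B}$ is a \emph{foliated} bundle, the partial connection $T\FF_{B}$ is involutive and hence locally flat. First I would observe that the defining property $T\FF_{B}\cap VB = 0$ of a partial connection (Definition \ref{parcon}), together with $d\pi_{B}(T\FF_{B}) = T\FF$ and a dimension count, forces $d\pi_{B}$ to restrict to a fibrewise linear isomorphism $T\FF_{B}\to\pi_{B}^{*}T\FF$: indeed $\ker(d\pi_{B}) = VB$ meets $T\FF_{B}$ only in the zero section, so $d\pi_{B}|_{T\FF_{B}}$ is injective, and surjectivity onto $T\FF$ equates the ranks. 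Consequently there is a unique \emph{horizontal lift} operator sending $b'\in B$ and $v\in(T\FF)_{\pi_{B}(b')}$ to the unique vector $\mathrm{hor}_{b'}(v)\in(T\FF_{B})_{b'}$ with $d\pi_{B}(\mathrm{hor}_{b'}(v)) = v$. A curve $\gamma_{b}$ then satisfies items 1.--3. precisely when it is the integral curve through $b$ of the time-dependent vector field $X_{t}(b') := \mathrm{hor}_{b'}(\dot{\gamma}(t))$ along the fibre over $\gamma(t)$.

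For local existence and uniqueness I would invoke the Picard--Lindel\"{o}f theorem applied to $X_{t}$, whose smooth dependence on $(t,b')$ yields a unique local solution. In fact, because $T\FF_{B}$ is involutive, $(B,\FF_{B})$ carries a foliated atlas in the sense of Definition \ref{charts}, and in any such foliated chart the lift can be written down explicitly, exactly as in the proof of Theorem \ref{parallel}: the plaques of $\FF_{B}$ are coordinate slices on which the transverse and fibre coordinates are constant, and $\pi_{B}$ carries each such plaque diffeomorphically onto a plaque of $\FF$, so a leafwise path $\gamma$ confined to a single plaque of $M$ lifts simply by holding the transverse and fibre coordinates fixed at their initial values while tracking the leafwise coordinate of $\gamma(t)$. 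To globalise, I would cover the compact image $\gamma([0,d])$ by a chain $U_{0},\dots,U_{k}$ of such foliated charts, choose a partition $0 = t_{0}<t_{1}<\cdots<t_{k}<t_{k+1} = d$ with $\gamma(t_{i})\in U_{i-1}\cap U_{i}$, and concatenate the explicit local lifts; since the glueing of foliated charts is governed by the compatibility condition \eqref{compatible}, the local lifts agree on overlaps. Uniqueness on all of $[0,d]$ then follows from local uniqueness and the connectedness of the interval, precisely as in the uniqueness argument of Theorem \ref{parallel}, and the sitting instants of $\gamma$ guarantee that the resulting $\gamma_{b}$ is smooth on all of $\RB_{+}$ and constant past $d$.

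The main obstacle, and the only step demanding genuine care, is the smoothness of $L^{0}_{B}$ in the diffeological sense, which I would establish by mirroring the proof of Theorem \ref{lifting}. Given plots $p:U\rightarrow\PP_{T\FF}(M)$, written $p(u) = (\tilde{\gamma}(u),\tilde{d}(u))$, and $\rho:V\rightarrow B$ compatible over the source map, I would fix a base parameter $u_{0}$, choose a chain of foliated charts covering $\tilde{\gamma}(u_{0})$, and shrink $U$ so that for all $u$ near $u_{0}$ the paths $\tilde{\gamma}(u)$ are covered by the \emph{same} chain and admit a common, smoothly varying partition $\tilde{t}_{1}(u)<\cdots<\tilde{t}_{k}(u)$. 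On each subinterval the lift is then an explicit composite of transverse transition maps with the leafwise coordinate data of $\tilde{\gamma}(u)(t)$ and the initial fibre datum $\rho(v)$, so the induced map $U\times_{s\circ p,\,\pi_{B}\circ\rho}V\times\RB_{+}\rightarrow B$ sending $(u,v,t)$ to the value of the lift at time $t$ is smooth; this shows $L^{0}_{B}\circ(p\times\rho)$ is a plot of $\PP_{T\FF_{B}}(B)$, as required. Equivalently, one may package this as the smooth dependence of solutions of $X_{t}$ on initial conditions and parameters furnished by Picard--Lindel\"{o}f, translated into a statement about plots. The delicate points are purely the uniformity of the chart chain and the smoothness of the partition in the parameter, both handled verbatim as in Theorem \ref{lifting}.
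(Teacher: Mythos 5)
Your argument is, in outline, exactly what lies behind the citation that the paper gives in place of a proof (Theorem \ref{transport} is quoted from \cite[Theorem 9.8]{natopdiffgeom} and closed with no argument supplied): the observation that $d\pi_{B}$ restricts to a fibrewise isomorphism $T\FF_{B}\rightarrow\pi_{B}^{*}T\FF$, local existence and uniqueness via the resulting horizontal lift of $\dot{\gamma}(t)$, and smoothness in parameters obtained either from smooth dependence of ODE solutions on data or, as you prefer, from an explicit chartwise formula patched over a chain, mirroring Theorem \ref{lifting}. The uniqueness argument and the translation of classical parameter-smoothness into plot-smoothness are both fine.

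The genuine gap is in the globalisation step, at the sentence asserting that $\pi_{B}$ carries each plaque of $\FF_{B}$ diffeomorphically \emph{onto} a plaque of $\FF$. A plaque of a foliated chart of $(B,\FF_{B})$ maps diffeomorphically onto an \emph{open subset} of a leaf of $\FF$, and nothing in Definitions \ref{parcon} and \ref{folbund} forces that subset to exhaust the plaque you need to traverse; consequently the lift can fail to exist on all of $[0,d]$. Concretely, take $M=\RB$ with its one-leaf foliation, $B=\RB^{2}$ with $\pi_{B}(x,y)=x$, and $T\FF_{B}$ spanned by $\partial_{x}+y^{2}\partial_{y}$: this is a smooth involutive partial connection in the sense of Definition \ref{folbund}, yet the lift of $\gamma(t)=t$ through $b=(0,1)$ is $y(t)=1/(1-t)$, which escapes to infinity before $t=1$. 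No refinement of the chart chain or of the partition repairs this, because the chart size needed around the lifted point depends on $b$ and shrinks to zero as the lift blows up; the set of times up to which the lift exists is open but need not be closed. What saves the statement (and your proof) is the stronger, Kamber--Tondeur reading of ``foliated bundle'' that the paper uses everywhere else, e.g.\ implicitly in Proposition \ref{1jets}: there is an atlas of local trivialisations $\tau_{\alpha}:B|_{U_{\alpha}}\rightarrow U_{\alpha}\times F$ over foliated charts of $M$ in which $\FF_{B}$ becomes the product of $\FF|_{U_{\alpha}}$ with points of $F$, with transition functions depending only on the transverse coordinates as in \eqref{compatible}. Under that hypothesis your formula $t\mapsto\tau_{\alpha}^{-1}(\gamma(t),f_{0})$ is defined over the whole of each $U_{\alpha}$ in the chain and the concatenation closes up. You should state this (or an equivalent completeness hypothesis on the leafwise parallel transport) explicitly: it does not follow from involutivity of a partial connection alone, and the cited \cite[Theorem 9.8]{natopdiffgeom} is only a local-in-time existence theorem.
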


Theorem \ref{transport} enables us to assign to each foliated bundle $\pi_{B}$ associated to $(M,\FF)$ a canonical leafwise transport functor $T:\PP_{T\FF}(M)\rightarrow\Aut(\pi_{B})$, which is defined simply via the classical lifting map arising from Theorem \ref{transport}.

\begin{thm}\label{bicboi}
Let $\pi_{B}:B\rightarrow M$ be a foliated bundle associated to $(M,\FF)$.  Then the formula
\[
T_{\pi_{B}^{0}}(\gamma,d)(b):=\gamma_{b}(d),\hspace{7mm}(\gamma,d)\in \PP_{T\FF}(M),\,b\in B_{\gamma(0)}
\]
defines a leafwise transport functor $T_{\pi_{B}^{0}}:\PP_{T\FF}(M)\rightarrow\Aut(\pi_{B})$.
\end{thm}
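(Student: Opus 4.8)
The plan is to follow the template of the proof of Theorem \ref{Tf}, since the situation here is formally identical once Theorem \ref{transport} is in hand: the bundle of germs has been replaced by $\pi_B$ itself and the connection $H$ by the involutive subbundle $T\FF_B$, while the r\^ole of Lemma \ref{lifting} is played by the smooth lifting map $L_B^0$ of Theorem \ref{transport}. First I would verify that $T_{\pi_B^0}$ is a well-defined functor into $\Aut(\pi_B)$. For fixed $(\gamma,d)\in\PP_{T\FF}(M)$, the uniqueness clause of Theorem \ref{transport} shows that $b\mapsto\gamma_b(d)$ is a well-defined map $B_{\gamma(0)}\to B_{\gamma(d)}$, and that transport along the reversed path $\iota(\gamma,d)=(\gamma^{-1},d)$ of Proposition \ref{inverses} furnishes a two-sided inverse (the lift of $\gamma^{-1}$ through $\gamma_b(d)$ is the time-reversal of $\gamma_b$, hence ends at $b$); thus $T_{\pi_B^0}(\gamma,d)\in\Diff(B_{\gamma(0)},B_{\gamma(d)})$ as required. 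Identities map to identities because the lift of a constant path is constant.

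Functoriality, namely $T_{\pi_B^0}(\gamma_1\gamma_2,d_1+d_2)=T_{\pi_B^0}(\gamma_1,d_1)\circ T_{\pi_B^0}(\gamma_2,d_2)$, then follows by the same uniqueness: the concatenation of the lift of $\gamma_2$ through $b$ with the lift of $\gamma_1$ through $\gamma_{2,b}(d_2)$ is a leafwise lift of $\gamma_1\gamma_2$ with the correct initial point, and is therefore \emph{the} lift provided by Theorem \ref{transport}. This is the conceptual heart of the argument, but it is immediate given the uniqueness statement. The essential structural observation for smoothness is the factorisation
\[
T_{\pi_B^0}(\gamma,d)(b) = r\circ L_B^0\big((\gamma,d),b\big),
\]
where $r:\PP_{T\FF_B}(B)\rightarrow B$ is the smooth range map $(\eta,d)\mapsto\eta(d)$; this exhibits $T_{\pi_B^0}$ as a transport functor in the precise sense of Definition \ref{transportfunctor}, with $L_B^0$ as its lifting map.

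To conclude smoothness of $T_{\pi_B^0}$ as a map into the structure groupoid, I would unwind the functional diffeology of $\Aut(\pi_B)$. Given a plot $\varphi:U\rightarrow\PP_{T\FF}(M)$, I must check that the three maps \eqref{f1}, \eqref{f2}, \eqref{f3} associated to $T_{\pi_B^0}\circ\varphi$ are smooth. The map \eqref{f1} sends $(u,b)\mapsto r\big(L_B^0(\varphi(u),b)\big)$ and is smooth as a composite of the smooth maps $L_B^0$ and $r$; the map \eqref{f3} sends $u\mapsto\big(\varphi(u)(d(u)),\varphi(u)(0)\big)$ and is smooth by smoothness of the source and range on $\PP_{T\FF}(M)$. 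The one point requiring a little care is \eqref{f2}, involving the fibrewise inverse of $T_{\pi_B^0}(\varphi(u))$: by the reversibility established above, this equals $T_{\pi_B^0}(\iota(\varphi(u)))$, so \eqref{f2} factors as $(u,b)\mapsto r\big(L_B^0(\iota(\varphi(u)),b)\big)$ and is smooth because $\iota$ is smooth by Proposition \ref{inverses}.

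I expect this inverse-direction check to be the only genuine (and quite minor) obstacle, together with making the uniqueness-based argument for the composition law precise; everything else is a direct transcription of the argument already given for $T_{\FF}$ in Theorem \ref{Tf}. Once the three conditions are verified, $T_{\pi_B^0}\circ\varphi$ is a plot of $\Aut(\pi_B)$ for every plot $\varphi$, so $T_{\pi_B^0}$ is a smooth functor and hence, by the factorisation above, a leafwise transport functor.
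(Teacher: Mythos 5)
Your proposal is correct and takes essentially the same route as the paper, which disposes of the theorem in one line by observing that functoriality and smoothness both follow from Theorem \ref{transport}; your write-up simply makes explicit the uniqueness argument for the composition law and the factorisation $T_{\pi_B^0} = r\circ L_B^0$ that the paper leaves implicit (and that mirrors the proof of Theorem \ref{Tf}). The extra care you take with the inverse-direction map \eqref{f2} is a sensible elaboration but not a departure from the paper's argument.
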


\begin{proof}
Functoriality and smoothness both follow from Theorem \ref{transport}.
\end{proof}

Theorem \ref{bicboi} allows us to define a new holonomy groupoid associated canonically to any foliated bundle.

\begin{defn}\label{fibreholonomy}
Let $\pi_{B}:B\rightarrow M$ be a foliated bundle associated to $(M,\FF)$.  The holonomy groupoid $\HH(T_{\pi_{B}^{0}})$ associated to the leafwise transport functor $T_{\pi_{B}^{0}}$ is called the \textbf{fibre holonomy groupoid associated to the foliated bundle $\pi_{B}$}.
\end{defn}

\begin{rmk}\normalfont\label{jets2}
	By the discussion of Remark \ref{jets}, each transverse jet bundle $\pi_{B}^{k}:J^{k}_{\t}(\pi_{B})\rightarrow M$ is also a foliated bundle, with foliation $\FF_{k}$ of its total space $J^{k}_{\t}(\pi_{B})$.  Thus  Theorems \ref{transport} and \ref{bicboi} may be applied to yield a smooth lifting function
	\[
	L_{B}^{k}:\PP_{T\FF}(M)\times_{s,\pi_{B}^{k}}J^{k}_{\t}(\pi_{B})\rightarrow\PP_{T\FF_{k}}(J^{k}_{\t}(\pi_{B})),
	\]
	and an associated leafwise transport functor
	\[
	T_{\pi_{B}^{k}}:\PP_{T\FF}(M)\rightarrow\Aut(\pi_{B}^{k})
	\]
	and holonomy groupoid $\HH(T_{\pi_{B}^{k}})$.
\end{rmk}

The fibre holonomy groupoid of a foliated bundle $\pi_{B}$ is, as we will see in the next section, the \emph{smallest} of a hierarchy of holonomy groupoids associated to $\pi_{B}$.  Its elements are easily accessible - to determine the $T_{\pi_{B}^{0}}$-equivalence class of a smooth path $\gamma$ one need only solve a relatively simple parallel transport differential equation as in Theorem \ref{transport}.

Thus $\HH(T_{\pi_{B}^{0}})$ may be thought of as capturing the holonomy of 0-jets of distinguished sections of $\pi_{B}$.  The higher fibre holonomy groupoids $\HH(T_{\pi_{B}^{k}})$ of the foliated jet bundles $\pi_{B}^{k}:J^{k}_{\t}(\pi_{B})\rightarrow M$ associated to $\pi_{B}$ (see Remark \ref{jets}) then capture the holonomy of $k$-jets of distinguished sections of $\pi_{B}$.  In analogy with the previous section, there is a bigger prize - namely a groupoid which captures the holonomy of the full \emph{germs} of distinguished sections of $\pi_{B}$.  Constructing such a groupoid is our objective in the next section.

\subsection{The holonomy groupoid of a foliated bundle}

The construction of the holonomy groupoid of a foliated manifold begun with the construction of a bundle of germs of distinguished functions.  In the same way, the construction of the holonomy groupoid of a foliated bundle begins with the construction of a bundle of germs of distinguished sections.

Let $\pi_{B}:B\rightarrow M$ be a foliated bundle, and denote by $q$ the codimension of the induced foliation on the base.  Recall (Definition \ref{distinguishedsec}) that a \emph{distinguished section} of $\pi_{B}$ is a section $\sigma$ of $\pi_{B}$, defined over some open neighbourhood $\dom(\sigma)$ of $M$, such that there is a distinguished function $f\in\DS(M,\FF)$ defined on $\dom(\sigma)$ and a smooth function $\sigma_{f}:\range(f)\rightarrow B$ such that
\[
\sigma = \sigma_{f}\circ f.
\]
Denote by $\DS(\pi_{B})$ the set of all distinguished sections of $\pi_{B}$, equipped with the subspace diffeology of the functional diffeology (see Definition \ref{functionaldiff2}) on $C^{\infty}_{\loc}(M,B)$.  Equip the set $M\times\DS(\pi_{B})$ with the product diffeology, and equip the subset
\[
S:=\{(x,\sigma)\in M\times\DS(\pi_{B}):x\in\dom(\sigma)\}
\]
of $M\times\DS(\pi_{B})$ with the resulting subspace diffeology.  Define an equivalence relation $\sim$ on $S$ by declaring $(x,\sigma)\sim(y,\sigma')$ if and only if $x =y$ and $[\sigma]_{x} = [\sigma']_{x}$.  Finally, consider the diffeological quotient
\[
\DS_{\g}(\pi_{B}):=S/\sim
\]
of $S$. Points of $\DS_{\g}(\pi_{B})$ can be written $(x,[\sigma]_{x})$, where $[\sigma]_{x}$ denotes an equivalence class of distinguished sections defined about $x$ under $\sim$.  In this quotient diffeology, a parametrisation $\rho:U\rightarrow\DS_{\g}(\pi_{B})$ is a plot if and only if for each $u_{0}\in U$, there is a neighbourhood $V$ of $u_{0}$ in $U$ and plots $\tilde{x}:V\rightarrow M$ and $\tilde{\sigma}:V\rightarrow\DS(\pi_{B})$ such that
\begin{equation}\label{rho}
\rho(u) = (\tilde{x}(u),[\tilde{\sigma}(u)]_{\tilde{x}(u)})
\end{equation}
for all $u\in V$.  The natural projection $\pi_{\DS_{\g}(\pi_{B})}:\DS_{\g}(\pi_{B})\rightarrow M$ defined by
\[
\pi_{\DS_{\g}(\pi_{B})}(x,[\sigma]_{x}):=x,\hspace{7mm}(x,[\sigma]_{x})\in\DS_{\g}(\pi_{B})
\]
is then clearly a subduction: any plot $x:U\rightarrow M$ of $M$ can be realised as the composite $\pi_{\DS_{\g}(\pi_{B})}\circ\rho$, where $\rho$ is some plot of the form given in Equation \eqref{rho} for some plot $\tilde{\rho}:U\rightarrow\DS(\pi_{B})$.  In fact, we can show using the local structure of the foliated bundle $\pi_{B}:B\rightarrow M$ that $\pi_{\DS_{\g}(\pi_{B})}:\DS_{\g}(\pi_{B})\rightarrow M$ is a diffeological fibre bundle.

\begin{lemma}
	Let $\pi_{B}:B\rightarrow M$ be a foliated bundle, and denote by $q$ the codimension of the induced foliation $\FF$ of $M$.  Then the characteristic map $(r,s):\Aut(\pi_{\DS_{\g}(\pi_{B})})\rightarrow M\times M$ is a subduction, hence $\pi_{\DS_{\g}(\pi_{B})}:\DS_{\g}(\pi_{B})\rightarrow M$ is a diffeological fibre bundle.
\end{lemma}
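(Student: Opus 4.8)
The plan is to show that $(r,s)$ is a subduction by lifting plots of $M\times M$ locally through it. Since $\Aut(\pi_{\DS_\g(\pi_B)})$ carries the functional diffeology, for which $(r,s)$ is automatically smooth, it suffices by Definition \ref{subind} to show that every plot $\phi=(\phi_1,\phi_2):U\to M\times M$ is, on a neighbourhood of each point, either constant or of the form $(r,s)\circ\Phi$ for a plot $\Phi$ of $\Aut(\pi_{\DS_\g(\pi_B)})$. The tool for producing $\Phi$ is a family of local trivialisations. Write $F$ for the typical fibre of $B$, and let $\mathcal{E}$ be the diffeological quotient of $\{g\in C^\infty_\loc(\RB^q,F):0\in\dom(g)\}$ (with its functional diffeology, Definition \ref{functionaldiff2}) by the relation identifying maps with equal germ at $0$; this $\mathcal{E}$ will be the typical fibre. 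The key structural input is that in any foliated chart $(U_\alpha,x_\alpha,y_\alpha)$ over which $B$ trivialises via $\tau_\alpha:B|_{U_\alpha}\to U_\alpha\times F$, every distinguished section $\sigma$ defined near $x\in U_\alpha$ satisfies $\proj_F\circ\tau_\alpha\circ\sigma=g\circ y_\alpha$ for a smooth $g$ defined near $y_\alpha(x)$ (Definition \ref{distinguishedsec}, taking the defining distinguished function to be $y_\alpha$).

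First I would construct, for each such chart, a trivialisation $\Psi_\alpha:\pi_{\DS_\g(\pi_B)}^{-1}(U_\alpha)\to U_\alpha\times\mathcal{E}$. On points it sends $(x,[\sigma]_x)$ to $\big(x,[\,g(\,\cdot\,+y_\alpha(x))\,]_0\big)$, where $g$ is as above and the translation moves the base point of the germ from $y_\alpha(x)$ to $0$; this is well defined on germs because $g$ is determined near $y_\alpha(x)$ by $[\sigma]_x$. Its inverse sends $(x,[h]_0)$ to $(x,[\sigma]_x)$, where $\sigma(x'):=\tau_\alpha^{-1}\big(x',h(y_\alpha(x')-y_\alpha(x))\big)$ for $x'$ near $x$; the resulting $\sigma$ factors through $y_\alpha$, hence is distinguished. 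I would then verify that $\Psi_\alpha$ is a diffeomorphism of diffeological spaces, checking smoothness of both directions against the plot description of $\DS_\g(\pi_B)$ (Equation \eqref{rho}), of the product $U_\alpha\times\mathcal{E}$, and of the functional diffeologies involved, using smoothness of $\tau_\alpha$, $y_\alpha$, the translations, and composition of locally defined maps.

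With the $\Psi_\alpha$ in hand, I would assemble the lift. Given $\phi=(\phi_1,\phi_2)$ and $u_0\in U$, shrink $U$ to a neighbourhood $V$ on which $\phi_1(V)\subset U_\alpha$ and $\phi_2(V)\subset U_\beta$ for foliated charts as above. Define, for $v\in V$,
\[
\Phi(v):=\Psi_\alpha^{-1}(\phi_1(v),\,\cdot\,)\circ\Psi_\beta(\phi_2(v),\,\cdot\,):\ \DS_\g(\pi_B)_{\phi_2(v)}\longrightarrow\DS_\g(\pi_B)_{\phi_1(v)},
\]
a diffeomorphism of fibres with $r(\Phi(v))=\phi_1(v)$ and $s(\Phi(v))=\phi_2(v)$, so that $(r,s)\circ\Phi=\phi|_V$. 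It remains to check that $\Phi$ is a plot of $\Aut(\pi_{\DS_\g(\pi_B)})$, i.e. that the maps $\Phi_s$, $\Phi_r$ of Equations \eqref{f1}, \eqref{f2} and $(r,s)\circ\Phi$ of Equation \eqref{f3} are smooth; the last equals $\phi|_V$, while the first two reduce to the joint smoothness of $\Psi_\alpha,\Psi_\beta$ in the base and fibre variables established in the previous step. This produces the required local lift and shows $(r,s)$ is a subduction, whence $\pi_{\DS_\g(\pi_B)}$ is a diffeological fibre bundle.

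The main obstacle I expect is the smoothness of $\Psi_\alpha^{-1}$ — equivalently, of the reconstruction $(x,[h]_0)\mapsto(x,[\sigma]_x)$ — since it requires turning a smoothly varying family of germs at $0$ back into a smoothly varying family of germs of distinguished sections. This is exactly the kind of bookkeeping in the functional diffeology on $C^\infty_\loc(M,B)$ and its germ quotient that appears in the proofs that $\pi_{\DS_\g(M,\FF)}$ is a subduction and that the principal action there is an induction; the new feature is the fibre coordinate, controlled by the fact that, $\pi_B$ being a foliated bundle (Definition \ref{folbund}), its transition data depend only on transverse coordinates, so that the factorisation through $y_\alpha$ is compatible with the germ equivalence and varies smoothly with all parameters.
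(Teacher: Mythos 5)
Your proposal is correct and follows essentially the same route as the paper: both exploit the fact that over a trivialising foliated chart every distinguished section factors as $\tau_\alpha^{-1}\big(x,\eta\circ y_\alpha(x)\big)$, and use this to transfer germs between fibres, producing a local lift of the plot of $M\times M$ through $(r,s)$. Your explicit factorisation through a typical fibre $\mathcal{E}$ of germs (with translation to base point $0$) is just a slightly more pedantic packaging of the paper's direct construction, which instead assumes WLOG that the two charts have the same transverse coordinate range.
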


\begin{proof}
	Let $\tilde{x}:U\rightarrow M\times M$ be a plot, so that $\tilde{x}_{1}:=\proj_{1}\circ\tilde{x}$ and $\tilde{x}_{2}:=\proj_{2}\circ\tilde{x}$ are plots $U\rightarrow M$.  We must show that $\tilde{x}$ is locally of the form $(r,s)\circ\rho$ for some plot $\rho$ of $\Aut(\pi_{\DS_{\g}(\pi_{B})})$.
	
	For any $u_{0}\in U$, we can find a neighbourhood $V$ of $u_{0}$ in $U$ which is sufficiently small that $\tilde{x}_{i}(V)$ is contained in a foliated chart $(U_{i},x_{i},y_{i})$ of $M$ with local trivialisation $\tau_{i}:B|_{U_{i}}\rightarrow U_{i}\times F$ of $B$ for $i =1,2$.  Let us assume without loss of generality that $y_{1}(U_{1}) = y_{2}(U_{2})$.
	
	Now, for $v\in V$, any distinguished section $\sigma$ of $\pi_{B}$ defined over $U_{2}$ is associated to a unique $\eta:y_{2}(U_{2})\rightarrow F$ such that
	\[
	\sigma(x) = \tau_{2}^{-1}\big(x,\eta\circ y_{2}(x)\big)
	\]
	for all $x\in U_{2}$.  To this $\sigma$ we associate $\tilde{\sigma}$ defined on $U_{1}$ by
	\[
	\tilde{\sigma}(x):=\tau_{1}^{-1}\big(x,\eta\circ y_{1}(x)\big)
	\]
	for all $x\in U_{2}$.  We then define a parametrisation $\rho:V\rightarrow\Aut(\pi_{\DS_{\g}(\pi_{B})})$ by the formula
	\[
	\rho(u)\big(\tilde{x}_{2}(u),[\sigma]_{\tilde{x}_{2}(u)}\big):=\big(\tilde{x}_{1}(u),[\tilde{\sigma}]_{\tilde{x}_{1}(u)}\big),
	\]
	and it is evident that the associated maps defined as in Equations \eqref{f1}, \eqref{f2} and \eqref{f3} are smooth, implying that $\rho$ is a plot.  Now by definition $(r,s)\circ\rho = \tilde{x}|_{V}$, hence $(r,s)$ is a subduction as claimed.
\end{proof}

\begin{defn}
	Let $\pi_{B}:B\rightarrow M$ be a foliated bundle.  We refer to the diffeological fibre bundle $\pi_{\DS_{\g}(\pi_{B})}:\DS_{\g}(\pi_{B})\rightarrow M$ as the \textbf{bundle of germinal distinguished sections of $\pi_{B}$}.
\end{defn}

\begin{rmk}\label{assoc}\normalfont
	In fact the bundle of germinal distinguished sections can be equivalently thought of as a sort of associated bundle for $\DS_{\g}(M,\FF)$ in the following way. Denote by $\BS$ the diffeological subspace of $\in C^{\infty}_{\loc}(\RB^{q},B)$ consisting of functions $\eta$ that are defined in a neighbourhood of $0$ and for which $\pi_{B}\circ\eta:\dom(\eta)\rightarrow M$ is constant.  Denote by $\BS_{\g}$ the diffeological quotient of $\BS$ by the equivalence relation which deems $\eta$ and $\eta'$ equivalent if and only if their germs $[\eta]_{0}$ and $[\eta']_{0}$ at zero coincide.
	
	Then $\BS_{\g}$ carries a natural left action of $\g\Diff_{0}^{\loc}(\RB^{q})$ defined by composition of germs:
	\[
	[\varphi]_{0}\cdot[\eta]_{0}:=[\eta]_{0}[\varphi^{-1}]_{0},\hspace{7mm}[\varphi]_{0}\in\g\Diff_{0}^{\loc}(\RB^{q}),\,[\eta]_{0}\in\BS_{\g}.
	\]
	We consider the diffeological subspace $\DS_{g}(M,\FF)\times_{M}\BS_{\g}$ of the diffeological product $\DS_{\g}(M,\FF)\times\BS$, consisting of triples $(x,[f]_{x},[\eta]_{0})$ for which $\pi_{B}(\eta(0)) =x$.  Thereon, we have the diagonal action of $\g\Diff_{0}^{\loc}(\RB^{q})$ defined for $(x,[f]_{x},[\eta]_{0})\in\DS_{\g}(M,\FF)\times_{M}\BS_{\g}$ and $[\varphi]_{0}\in\g\Diff_{0}^{\loc}(\RB^{q})$ again by composition of germs:
	\[
	(x,[f]_{x},[\eta]_{0})\cdot[\varphi]_{0}:=(x,[\varphi^{-1}]_{0}[f]_{x},[\eta]_{0}[\varphi]_{0}).
	\]
	Finally the diffeological quotient
	\[
	(\DS_{\g}(M,\FF)\times_{M}\BS_{\g})/\g\Diff_{0}^{\loc}(\RB^{q})
	\]
	identifies naturally with the space $\DS_{\g}(\pi_{B})$.
	
	Indeed, for any $(x,[\sigma]_{x})\in\DS_{\g}(\pi_{B})$ and for any $(x,[f]_{x})\in\DS_{\g}(M,\FF)$, there is a unique $[\sigma_{f}]_{0}\in\BS_{\g}$ such that we may factorise $[\sigma]_{x} = [\sigma_{f}]_{0}[f]_{x}$.  Since moreover any other choice $(x,[g]_{x})\in\DS_{\g}(M,\FF)$ differs from $(x,[f]_{x})$ only by some element of $\g\Diff_{0}^{\loc}(\RB^{q})$, the map sending $(x,[f]_{x},[\sigma_{f}]_{0})\in\DS_{\g}(M,\FF)\times_{M}\BS_{\g}$ to $(x,[\sigma_{f}]_{0}[f]_{x})\in\DS_{\g}(\pi_{B})$ descends to a diffeomorphism $(\DS_{\g}(M,\FF)\times_{M}\BS_{\g})/\g\Diff_{0}(\RB^{q})\cong\DS_{\g}(\pi_{B})$.
\end{rmk}

We now construct a partial connection on $\DS_{\g}(\pi_{B})$. For $(x,[\sigma]_{x})\in\DS_{\g}(\pi_{B})$, one defines $(H_{B}^{\g})_{(x,[\sigma]_{x})}$ to be the subspace of $T_{(x,[\sigma]_{x})}\DS_{\g}(\pi_{B})$ consisting of vectors of the form
\[
\rho(\sigma,\gamma)_{*}(\partial_{t}),
\]
where $\sigma$ is any representative of $[\sigma]_{x}$, $\gamma:(-\epsilon,\epsilon)\rightarrow M$ is any smooth leafwise path in $\dom(\sigma)$ with $\gamma(0) = x$, and where $\rho(\sigma,\gamma):(-\epsilon,\epsilon)\rightarrow \DS_{\g}(\pi_{B})$ is the plot defined by
\[
\rho(\sigma,\gamma)(t):=(\gamma(t),[\sigma]_{\gamma(t)}),\hspace{7mm}t\in(-\epsilon,\epsilon).
\]
Similar arguments to those used in the proof of Proposition \ref{Hcon} then give the following result.

\begin{prop}
	Let $\pi_{B}:B\rightarrow M$ be a foliated bundle.  Then the subbundle
	\[
	H_{B}^{\g}:=\bigsqcup_{(x,[\sigma]_{x})}(H_{B}^{\g})_{(x,[\sigma]_{x})}
	\]
	of $T\DS_{\g}(\pi_{B})$ is a partial connection for the diffeological bundle $\pi_{\DS_{\g}(\pi_{B})}:\DS_{g}(\pi_{B})\rightarrow M$.\qed
\end{prop}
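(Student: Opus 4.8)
The plan is to mimic the proof of Proposition \ref{Hcon}, but the argument is in fact shorter: since $\pi_{\DS_{\g}(\pi_{B})}$ is merely a diffeological fibre bundle rather than a principal bundle, Definition \ref{conndiff} requires only that $d\pi_{\DS_{\g}(\pi_{B})}$ restrict to a fibrewise linear injection on $H^{\g}_{B}$, with no equivariance condition to verify. The key device is to fix, for each germ $[\sigma]_{x}$, a representative distinguished section $\sigma$ and to introduce the map $\Phi_{\sigma}:\dom(\sigma)\rightarrow\DS_{\g}(\pi_{B})$ defined by $\Phi_{\sigma}(z):=(z,[\sigma]_{z})$. One checks directly from the plot description in Equation \eqref{rho} that $\Phi_{\sigma}$ is smooth and satisfies $\pi_{\DS_{\g}(\pi_{B})}\circ\Phi_{\sigma}=\id_{\dom(\sigma)}$; moreover any two representatives of $[\sigma]_{x}$ have the same germ at $x$, so the differentials $d(\Phi_{\sigma})_{x}$ agree and depend only on the point $(x,[\sigma]_{x})$.

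First I would observe that every element of $(H^{\g}_{B})_{(x,[\sigma]_{x})}$ factors through $\Phi_{\sigma}$. Indeed, for a leafwise path $\gamma$ in $\dom(\sigma)$ with $\gamma(0)=x$ the defining plot satisfies $\rho(\sigma,\gamma)=\Phi_{\sigma}\circ\gamma$, whence $\rho(\sigma,\gamma)_{*}(\partial_{t})=d(\Phi_{\sigma})_{x}\big(\gamma_{*}(\partial_{t})\big)$ by Definition \ref{differential}. As $\gamma$ ranges over leafwise paths through $x$, the vectors $\gamma_{*}(\partial_{t})$ range over all of $T_{x}\FF$, so that $(H^{\g}_{B})_{(x,[\sigma]_{x})}=d(\Phi_{\sigma})_{x}(T_{x}\FF)$; in particular this set is genuinely a linear subspace of $T_{(x,[\sigma]_{x})}\DS_{\g}(\pi_{B})$. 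The inclusion $H^{\g}_{B}\hookrightarrow T\DS_{\g}(\pi_{B})$ with its subspace diffeology, together with the restriction of $\pi_{T\DS_{\g}(\pi_{B})}$, then exhibits $H^{\g}_{B}$ as a diffeological subbundle, exactly as for $H$ in the previous section.

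It then remains to verify fibrewise injectivity. Differentiating $\pi_{\DS_{\g}(\pi_{B})}\circ\Phi_{\sigma}=\id_{\dom(\sigma)}$ at $x$ gives $d\pi_{\DS_{\g}(\pi_{B})}\circ d(\Phi_{\sigma})_{x}=\id_{T_{x}M}$. Hence for any $v\in T_{x}\FF$ we have $d\pi_{\DS_{\g}(\pi_{B})}\big(d(\Phi_{\sigma})_{x}(v)\big)=v$, so that $d\pi_{\DS_{\g}(\pi_{B})}$ restricted to $(H^{\g}_{B})_{(x,[\sigma]_{x})}=d(\Phi_{\sigma})_{x}(T_{x}\FF)$ is a left inverse to $d(\Phi_{\sigma})_{x}|_{T_{x}\FF}$, and is therefore a linear injection onto $T_{x}\FF\subset T_{x}M$. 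This is precisely the condition required by Definition \ref{conndiff}.

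The main obstacle I anticipate lies at the level of diffeological bookkeeping rather than geometry: one must confirm that $\Phi_{\sigma}$ is a bona fide plot of $\DS_{\g}(\pi_{B})$, so that $d(\Phi_{\sigma})_{x}$ is defined and the factorisation $\rho(\sigma,\gamma)=\Phi_{\sigma}\circ\gamma$ is legitimate, and that $H^{\g}_{B}$ with its subspace diffeology really is a subbundle in the sense needed for Definition \ref{conndiff}. Both are routine verifications using the explicit plot description in Equation \eqref{rho} and the smoothness of composition established in Section 2, and neither introduces the germ-theoretic subtleties that will appear in the lifting theorem for $\DS_{\g}(\pi_{B})$.
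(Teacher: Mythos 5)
Your proof is correct and takes essentially the approach the paper intends: the paper disposes of this proposition by citing ``similar arguments to Proposition \ref{Hcon}'', whose injectivity step is precisely the observation you make explicit by factoring $\rho(\sigma,\gamma)=\Phi_{\sigma}\circ\gamma$ through the local section $\Phi_{\sigma}$, and you rightly note that the equivariance half of that earlier proof is not needed here since $\pi_{\DS_{\g}(\pi_{B})}$ is a fibre bundle rather than a principal bundle. No gaps.
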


The following lifting result follows in a similar fashion to Theorem \ref{parallel} and Theorem \ref{lifting}.

\begin{thm}\label{liftingB}
	Let $\pi_{B}:B\rightarrow M$ be a foliated bundle associated to a foliation $\FF$ of $M$.  Then for each $\big((\gamma,d),(x,[\sigma]_{x})\big)\in\PP_{T\FF}(M)\times_{s,\pi_{\DS_{\g}(\pi_{B})}}\DS_{\g}(\pi_{B})$ there is a unique $(\gamma_{[\sigma]_{x}},d)\in\PP_{H^{\g}_{B}}(\DS_{\g}(\pi_{B}))$ with $\gamma_{[\sigma]_{x}}(0) = (x,[\sigma]_{x})$ and such that $\pi_{\DS_{\g}(\pi_{B})}\circ\gamma_{[\sigma]_{x}} = \gamma$.  The resulting lifting map $L_{B}^{\g}:\PP_{T\FF}(M)\times_{s,\pi_{\DS_{\g}(\pi_{B})}}\DS_{\g}(\pi_{B})\rightarrow\PP_{H^{\g}_{B}}(\DS_{\g}(\pi_{B}))$ given by
	\[
	L_{B}^{\g}\big((\gamma,d),(x,[\sigma]_{x})\big):=(\gamma_{[\sigma]_{x}},d)
	\]
	is smooth.
\end{thm}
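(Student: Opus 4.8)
The plan is to exploit the associated-bundle description of $\DS_{\g}(\pi_{B})$ from Remark \ref{assoc}, which lets me reduce the lifting problem for $H^{\g}_{B}$ to the already-solved lifting problem for the principal partial connection $H$ on $\DS_{\g}(M,\FF)$. First I would record the exact analogue of Lemma \ref{formlem}: a map $\gamma_{[\sigma]_{x}}:\RB_{+}\rightarrow\DS_{\g}(\pi_{B})$ is a solution if and only if it has the form $\gamma_{[\sigma]_{x}}(t) = (\gamma(t),[\tilde{\sigma}(t)]_{\gamma(t)})$ for a smooth $\tilde{\sigma}:\RB_{+}\rightarrow\DS(\pi_{B})$ that is locally germ-constant along $\gamma$, meaning that near each $t$ there is a fixed $\sigma_{t}\in\DS(\pi_{B})$ with $\gamma(s)\in\dom(\sigma_{t})\subset\dom(\tilde{\sigma}(s))$ and $\tilde{\sigma}(s)|_{\dom(\sigma_{t})} = \sigma_{t}$ for all nearby $s$, where $[\sigma_{0}]_{x} = [\sigma]_{x}$. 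The proof is verbatim that of Lemma \ref{formlem}, using the definition of $H^{\g}_{B}$ in place of $H$ together with the characterisation of tangent vectors via Equation \eqref{equiv}.

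For existence I would factorise: given $(x,[\sigma]_{x})$, choose $(x,[f]_{x})\in\DS_{\g}(M,\FF)$ and write $[\sigma]_{x} = [\sigma_{f}]_{0}[f]_{x}$ for the unique $[\sigma_{f}]_{0}\in\BS_{\g}$ as in Remark \ref{assoc}. Letting $\gamma_{[f]_{x}}(t) = (\gamma(t),[\tilde{f}(t)]_{\gamma(t)})$ be the lift furnished by Theorem \ref{parallel}, I set $\gamma_{[\sigma]_{x}}(t) := (\gamma(t),[\sigma_{f}\circ\tilde{f}(t)]_{\gamma(t)})$, the image of $(\gamma_{[f]_{x}}(t),[\sigma_{f}]_{0})$ under the quotient of Remark \ref{assoc}. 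Since the local germ-constancy of $\tilde{f}$ from Lemma \ref{formlem} propagates to $\sigma_{f}\circ\tilde{f}$, the analogue above shows this is tangent to $H^{\g}_{B}$ and projects to $\gamma$; that $H$ is principal (Proposition \ref{Hcon}) is what makes the construction well defined on the quotient. Uniqueness follows exactly as in the proof of Theorem \ref{parallel}: given two solutions, the analogue of Lemma \ref{formlem} provides locally germ-constant $\tilde{\sigma}^{1},\tilde{\sigma}^{2}$ with $[\tilde{\sigma}^{1}(0)]_{x} = [\sigma]_{x} = [\tilde{\sigma}^{2}(0)]_{x}$, and germ-constancy propagates the equality $[\tilde{\sigma}^{1}(t)]_{\gamma(t)} = [\tilde{\sigma}^{2}(t)]_{\gamma(t)}$ across overlapping intervals, finitely many of which cover the compact $[0,d]$.

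Smoothness of $L_{B}^{\g}$ I would deduce from smoothness of the base lifting map $L_{\FF}$ (Theorem \ref{lifting}). The point is that $L_{B}^{\g}$ is the composite of three smooth maps: a local factorisation $(x,[\sigma]_{x})\mapsto\big((x,[f]_{x}),[\sigma_{f}]_{0}\big)$ into $\DS_{\g}(M,\FF)\times_{M}\BS_{\g}$; the product of $L_{\FF}$ with the identity on $\BS_{\g}$; and the quotient map onto $\PP_{H^{\g}_{B}}(\DS_{\g}(\pi_{B}))$. Concretely, given plots $p:U\rightarrow\PP_{T\FF}(M)$ and $\rho:V\rightarrow\DS_{\g}(\pi_{B})$, I would shrink $V$ so that $\rho(v) = (\tilde{x}(v),[\tilde{\sigma}(v)]_{\tilde{x}(v)})$, produce a smooth base germ $\tilde{f}$ via a local smooth section of $\pi_{\DS_{\g}(M,\FF)}$ (as in the subduction argument preceding Proposition \ref{Hcon}), extract the corresponding smoothly varying $[\tilde{\sigma}_{f}]_{0}$, and then invoke Theorem \ref{lifting} to conclude that $(u,v)\mapsto L_{B}^{\g}(p(u),\rho(v))$ is smooth.

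The main obstacle I anticipate is precisely this smooth parametrised factorisation $[\sigma]_{x} = [\sigma_{f}]_{0}[f]_{x}$: one must choose $[f]_{x}$ smoothly in families and then verify that the induced $[\sigma_{f}]_{0}\in\BS_{\g}$ varies smoothly with the plot parameters. This is the exact counterpart of the smooth dependence of $\tilde{\varphi}$ on $(u,v)$ in the proof of Theorem \ref{lifting}, and I expect it will require the local triviality of $\pi_{B}$ to write $\sigma_{f}$ through a smoothly varying fibre-valued map $\eta$, exactly as in the proof that $\pi_{\DS_{\g}(\pi_{B})}$ is a diffeological fibre bundle. Once this parametrised factorisation is established, the rest reduces cleanly to Theorem \ref{lifting}.
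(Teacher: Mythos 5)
Your overall architecture (the analogue of Lemma \ref{formlem}, uniqueness by propagating germ-equality along a finite subcover of $[0,d]$, smoothness by reduction to the base case) matches the paper's, but your existence formula has a genuine gap: the lift $\gamma_{[\sigma]_{x}}(t) := (\gamma(t),[\sigma_{f}\circ\tilde{f}(t)]_{\gamma(t)})$ omits the transport of the fibre component. Concretely, if $\sigma_{f}:\range(f)\rightarrow B$ is the map of Definition \ref{distinguishedsec} with $\sigma = \sigma_{f}\circ f$, then $\pi_{B}\circ\sigma_{f}$ takes values in $\dom(f)$, so once $\gamma(t)$ has left the initial chart the composite $\sigma_{f}\circ\tilde{f}(t)$ maps $\dom(\tilde{f}(t))$ into $\pi_{B}^{-1}(\dom(f))$ and is therefore not a section of $\pi_{B}$ near $\gamma(t)$ at all; if instead you read $[\sigma_{f}]_{0}$ as an element of $\BS_{\g}$ as in Remark \ref{assoc}, it is the germ of a map into the single fibre $B_{x}$, so the pair $(\gamma_{[f]_{x}}(t),[\sigma_{f}]_{0})$ does not lie in $\DS_{\g}(M,\FF)\times_{M}\BS_{\g}$ for $t>0$. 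Either way, the fibre datum must itself be parallel-transported from $B_{x}$ to $B_{\gamma(t)}$ along $\gamma$ using the fibre transport of Theorems \ref{transport} and \ref{bicboi}; this is exactly what the factors $\tau_{k,k-1}\circ\cdots\circ\tau_{1,0}$ supply in the paper's explicit construction, and what the term $[T_{\pi_{B}}\circ\eta]_{0}$ supplies in the associated-bundle formula of Remark \ref{assoc2}. Transporting only the base germ $[f]_{x}$ while holding the fibre germ fixed does not produce an element of $\DS_{\g}(\pi_{B})$ over $\gamma(t)$.

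The gap propagates into your smoothness argument, which as written composes $L_{\FF}$ with the identity on $\BS_{\g}$; once repaired, you must additionally invoke the smoothness of the fibre lifting map $L_{B}^{0}$ of Theorem \ref{transport}, together with the smooth parametrised factorisation $[\sigma]_{x} = [\sigma_{f}]_{0}[f]_{x}$ that you correctly identify as the delicate point. For comparison, the paper does not route through the associated bundle at all: it reruns the proof of Theorem \ref{parallel} over a chain of foliated charts equipped with local trivialisations, defining $\tilde{\sigma}(t)(y) = \tau_{k}^{-1}\big(y,\tau_{k,k-1}\circ\cdots\circ\tau_{1,0}\circ\sigma_{0}\circ y_{0,1}\circ\cdots\circ y_{k-1,k}\circ y_{k}(y)\big)$, so that horizontal and vertical transport are built in simultaneously, and then cites the arguments of Theorem \ref{lifting} for smoothness. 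Your reduction is viable and arguably cleaner --- the paper itself endorses it in Remark \ref{assoc2} --- but only after the fibre transport is reinstated.
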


\begin{proof}
	To show existence and uniqueness one replaces Lemma \ref{formlem} as follows.  Similar arguments to those used in Lemma \ref{formlem} show that a map $\gamma_{[\sigma]_{x}}:\RB_{+}\rightarrow\DS_{\g}(M,\FF)$ satisfies the requirements of Theorem \ref{liftingB} if and only if there exists a smooth function $\tilde{\sigma}:\RB_{+}\rightarrow\DS(\pi_{B})$ for which
	\[
	\gamma_{[\sigma]_{x}}(t) = (\gamma(t),[\tilde{\sigma}(t)]_{\gamma(t)}),\hspace{7mm}t\in\RB_{+},
	\]
	and such that for each $t\in\RB_{+}$, there exists a neighbourhood $V_{t}$ of $t$ and $\sigma_{t}\in\DS(\pi_{B})$ such that $\gamma(s)\in\dom(\sigma_{t})\subset\dom(\tilde{\sigma}(s))$ and $\tilde{\sigma}(s)|_{\dom(\sigma_{t})} = \sigma_{t}$ for all $s\in V_{t}$, where in particular $[\sigma_{0}]_{x} = [\sigma]_{x}$.  Now uniqueness follows in essentially the same way as the uniqueness part of Theorem \ref{parallel}.  For existence, one chooses a chain $\{U_{0},\dots,U_{k}\}$ of foliated charts covering $\range(\gamma)$ and associated partition $t_{1}<\cdots<t_{k}$ of $[0,d]$, and insists \emph{additionally} that the $U_{i}$ are associated to local trivialisations $\tau_{i}:B|_{U_{i}}\rightarrow U_{i}\times F$.  Letting $\sigma_{0}:\range(y_{0})\rightarrow F$ be such that the composite $\tau_{0}^{-1}\circ\big(\id_{U_{0}}\times(\sigma_{0}\circ y_{0})\big)$ represents $[\sigma]_{x}$, one then defines
	\[
	\dom(\tilde{\sigma}(t)):=\begin{cases}
							\dom(y_{0}) = U_{0}&\text{ if $0\leq t\leq t_{1}$}\\
							\dom(\tau_{1,0}\circ\sigma_{0}\circ y_{0,1}\circ y_{1})\subset U_{1}&\text{ if $t_{1}\leq t\leq t_{2}$}\\
							\vdots\\
							\dom(\tau_{k,k-1}\circ\cdots\circ \tau_{1,0}\circ\sigma_{0}\circ y_{0,1}\circ\cdots\circ y_{k-1,k}\circ y_{k})\subset U_{k}&\text{ if $t_{k}\leq t<\infty$}
							\end{cases}
	\]
	and
	\[
	\tilde{\sigma}(t)(y):=\begin{cases}
							\tau_{0}^{-1}\big(y,\sigma_{0}\circ y_{0}(y)\big)&\text{ if $0\leq t\leq t_{1}$}\\
							\tau_{1}^{-1}\big(y,\tau_{1,0}\circ\sigma_{0}\circ y_{0,1}\circ y_{1}(y)\big)&\text{ if $t_{1}\leq t\leq t_{2}$}\\
							\vdots\\
							\tau_{k}^{-1}\big(y,\tau_{k,k-1}\circ\cdots\circ\tau_{1,0}\circ\sigma_{0}\circ y_{0,1}\circ\cdots\circ y_{k-1,k}\circ y_{k}(y)\big)&\text{ if $t_{k}\leq t < \infty$}
							\end{cases}
	\]
	for all $t\in\RB_{+}$, and so obtains $\tilde{\sigma}:\RB_{+}\rightarrow\DS_{\g}(\pi_{B})$ with the required properties.
	
	Finally, taking into account these modifications to the proof of existence in Theorem \ref{parallel}, smoothness of the resulting lifting map $L_{B}^{\g}:\PP_{T\FF}(M)\times_{s,\pi_{\DS_{\g}(\pi_{B})}}\DS_{\g}(\pi_{B})\rightarrow\PP_{H_{B}^{\g}}(\DS_{\g}(\pi_{B}))$ follows from similar arguments to those of Theorem \ref{lifting}.
\end{proof}

The composition of $L_{B}^{\g}$ with the range map on $\PP_{H^{\g}_{B}}(\DS_{\g}(\pi_{B}))$ now yields a leafwise transport functor in essentially the same fashion as in Theorem \ref{Tf}.

\begin{thm}\label{TfB}
	Let $\pi_{B}:B\rightarrow M$ be a foliated bundle.  Then the formula
	\[
	T_{\pi_{B}^{\g}}(\gamma,d)\big(x,[\sigma]_{x}\big):=\gamma_{[\sigma]_{x}}(d),\hspace{7mm}\big((\gamma,d),(x,[\sigma]_{x})\big)\in\PP_{T\FF}(M)\times_{s,\pi_{\DS_{\g}(\pi_{B})}}\DS_{\g}(\pi_{B})
	\]
	defines a leafwise transport functor $T_{\pi_{B}^{\g}}:\PP_{T\FF}(M)\rightarrow\Aut(\pi_{\DS_{\g}(\pi_{B})})$.\qed
\end{thm}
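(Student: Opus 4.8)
The plan is to follow the proof of Theorem \ref{Tf} verbatim, with Theorem \ref{liftingB} playing the role that Theorem \ref{parallel} and Theorem \ref{lifting} played there. First I would check that for each $(\gamma,d)\in\PP_{T\FF}(M)$ the assignment $T_{\pi_{B}^{\g}}(\gamma,d):(x,[\sigma]_{x})\mapsto\gamma_{[\sigma]_{x}}(d)$ is a genuine morphism of $\Aut(\pi_{\DS_{\g}(\pi_{B})})$, that is, a diffeomorphism of the fibre over $\gamma(0)=x$ onto the fibre over $\gamma(d)$. Invertibility follows from the uniqueness clause of Theorem \ref{liftingB} applied to the reversed path $(\gamma^{-1},d)$ of Proposition \ref{inverses}: lifting $\gamma$ from $(x,[\sigma]_{x})$ and then lifting $\gamma^{-1}$ from the resulting endpoint returns $(x,[\sigma]_{x})$, so that $T_{\pi_{B}^{\g}}(\gamma^{-1},d)$ is a two-sided inverse of $T_{\pi_{B}^{\g}}(\gamma,d)$, exactly as $r\circ\iota=s$ forces in the abstract setting.

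Next I would establish functoriality. Identity morphisms are immediate, since the unique $H_{B}^{\g}$-lift of a constant leafwise path is constant, whence $T_{\pi_{B}^{\g}}$ sends $(\gamma_{x},0)$ to $\id_{\DS_{\g}(\pi_{B})_{x}}$. For composition, given composable $(\gamma_{1},d_{1})$ and $(\gamma_{2},d_{2})$, I would observe that the concatenation of the $H_{B}^{\g}$-lift of $\gamma_{2}$ starting at $(x,[\sigma]_{x})$ with the $H_{B}^{\g}$-lift of $\gamma_{1}$ starting at the endpoint of the former is again an $H_{B}^{\g}$-lift of $\gamma_{1}\gamma_{2}$ carrying the correct initial condition; by the uniqueness half of Theorem \ref{liftingB} it must equal $(\gamma_{1}\gamma_{2})_{[\sigma]_{x}}$. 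Evaluating at the terminal duration yields $T_{\pi_{B}^{\g}}(\gamma_{1}\gamma_{2},d_{1}+d_{2})=T_{\pi_{B}^{\g}}(\gamma_{1},d_{1})\circ T_{\pi_{B}^{\g}}(\gamma_{2},d_{2})$, which is precisely the argument used for $T_{\FF}$.

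Smoothness is then read off from Theorem \ref{liftingB}. The functor factors as $T_{\pi_{B}^{\g}}=r\circ L_{B}^{\g}$, where $L_{B}^{\g}$ is the smooth lifting map of Theorem \ref{liftingB} and $r:\PP_{H_{B}^{\g}}(\DS_{\g}(\pi_{B}))\rightarrow\DS_{\g}(\pi_{B})$, $(\eta,d)\mapsto\eta(d)$, is the manifestly smooth range map; this handles the source-evaluation map of the functional diffeology on $\Aut(\pi_{\DS_{\g}(\pi_{B})})$. The inverse-evaluation map is treated by the same factorization applied to the reversed path, using smoothness of $\iota$ from Proposition \ref{inverses}, while smoothness of $(r,s)\circ T_{\pi_{B}^{\g}}$ is clear since it only records the endpoints of $\gamma$. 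Because $L_{B}^{\g}$ is exactly the smooth lifting map demanded by Definition \ref{transportfunctor}, this simultaneously exhibits $T_{\pi_{B}^{\g}}$ as a \emph{leafwise} transport functor. Since all of the analytic content—existence, uniqueness, and smooth parameter dependence of lifts—is already packaged in Theorem \ref{liftingB}, I expect no genuine obstacle; the only point needing mild care is the bookkeeping for functoriality and invertibility, where one must confirm that concatenation and reversal of lifts commute with lifting, and this is guaranteed entirely by the uniqueness clause of Theorem \ref{liftingB}.
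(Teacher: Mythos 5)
Your proposal is correct and follows essentially the same route as the paper, which itself proves this theorem simply by noting that $T_{\pi_{B}^{\g}} = r\circ L_{B}^{\g}$ and that functoriality and smoothness follow from the uniqueness and smoothness clauses of Theorem \ref{liftingB}, exactly as Theorem \ref{Tf} was deduced from Theorems \ref{parallel} and \ref{lifting}. The additional detail you supply on invertibility via reversed paths and on the concatenation bookkeeping is consistent with, and merely elaborates, the paper's argument.
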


\begin{rmk}\label{assoc2}\normalfont
	The functor $T_{\pi_{B}^{\g}}$ can also be constructed directly from $T_{\pi_{B}}$ of Theorem \ref{bicboi}and $T_{\FF}$ of Theorem \ref{Tf} using the associated bundle picture considered in Remark \ref{assoc}.  Regarding $\DS_{\g}(\pi_{B})$ as the space $(\DS_{\g}(M,\FF)\times_{M}\BS_{\g})/\g\Diff_{0}^{\loc}(\RB^{q})$, the leafwise transport functor $T_{\pi_{B}^{\g}}$ takes the form
	\[
	T_{\pi_{B}^{\g}}(\gamma,d)\big([((x,[f]_{x}),[\eta]_{0})]\big):=[(T_{\FF}(\gamma,d)(x,[f]_{x}),[T_{\pi_{B}}\circ\eta]_{0})]
	\]
	for all $(x,[f]_{x})\in\DS_{\g}(M,\FF)$ and $[\eta]_{0}\in\BS_{\g}$.
\end{rmk}

\begin{defn}
	Let $\pi_{B}:B\rightarrow M$ be a foliated bundle.  The groupoid $\HH(T_{\pi_{B}^{\g}})$ associated to the leafwise transport functor $T_{\pi_{B}^{\g}}:\PP_{T\FF}(M)\rightarrow\Aut(\pi_{\DS_{\g}(\pi_{B})})$ of Theorem \ref{TfB} is called the \textbf{holonomy groupoid of the foliated bundle $\pi_{B}:B\rightarrow M$}
\end{defn}

\begin{ex}\label{fr}\normalfont
	Let $(M,\FF)$ be a foliated manifold of codimension $q$.  The transverse frame bundle $\pi_{\Fr(M/\FF)}:\Fr(M/\FF)\rightarrow M$ associated to $(M,\FF)$, whose fibre over $x\in M$ consists of all linear isomorphisms $\phi:\RB^{q}\rightarrow T_{x}M/T_{x}\FF$, is a foliated bundle.  This can be seen using the foliated charts of $(M,\FF)$.
	
	The holonomy groupoid $\HH(M,\FF)$ of $(M,\FF)$ is \emph{equal} to the holonomy groupoid $\HH(T_{\pi_{\Fr(M/\FF)}^{\g}})$ of the foliated transverse frame bundle.  Indeed, regard $\DS_{\g}(\pi_{\Fr(M/\FF)})$ as the associated bundle
	\[
	\big(\DS_{\g}(M,\FF)\times_{M}\BS_{\g}\big)/\g\Diff_{0}^{\loc}(\RB^{q}),
	\]
	where $\BS_{\g}$ is the space of germs at $0$ of locally defined smooth functions $\RB^{q}\rightarrow\Fr(M/\FF)$.  Now as in Remark \ref{assoc2}, the leafwise transport functor $T_{\pi_{\Fr(M/\FF)}^{\g}}$ is simply given by $T_{\FF}$ on the first coordinate and $T_{\pi_{\Fr(M/\FF)}^{0}}$ on the second.  Two paths being equivalent under $T_{\FF}$ implies that they are also equivalent under $T_{\pi_{\Fr(M/\FF)}^{0}}$, from which it follows that $\HH(M,\FF) = \HH(T_{\pi_{\Fr(M/\FF)}^{\g}})$.
\end{ex}

\subsection{Jets and a hierarchy of holonomy groupoids}

Let us continue to consider a foliated bundle $\pi_{B}:B\rightarrow M$, associated to a codimension $q$ foliation $\FF$ on $M$.  Recall that there is a projective system $\pi_{B}^{l,k}:J^{k}_{\t}(\pi_{B})\rightarrow J^{l}_{\t}(\pi_{B})$ of transverse jet bundles $\pi_{B}^{k}:J^{k}_{\t}(\pi_{B})\rightarrow M$ associated to $\pi_{B}$, where the manifold $J^{k}_{\t}(\pi_{B})$ consists of classes of distinguished sections of $\pi_{B}$ with the same $k$-jet (see Remark \ref{jets}).  We have the following relationship between $\DS_{\g}(\pi_{B})$ and the jet bundles $J^{k}_{\t}(\pi_{B})$.

\begin{prop}\label{gjet}
	For each $k\in\NB$, the projection $\pi_{B}^{k,\g}:\DS_{\g}(\pi_{B})\rightarrow J^{k}_{\t}(\pi_{B})$ defined by
	\[
	\pi_{B}^{k,\g}\big((x,[\sigma]_{x})\big):=j^{k}_{x}\sigma
	\]
	is smooth, and for any $l<k$ one has $\pi_{B}^{l,\g} = \pi_{B}^{l,k}\circ\pi_{B}^{k,\g}$.
\end{prop}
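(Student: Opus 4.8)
The plan is to verify first that $\pi_{B}^{k,\g}$ descends to germs, then to establish smoothness by testing against plots of $\DS_{\g}(\pi_{B})$ and working in the jet coordinates of Remark \ref{jets}, and finally to read off the compatibility relation directly from the definition of the truncation maps $\pi_{B}^{l,k}$. Well-definedness is immediate: if $\sigma$ and $\sigma'$ are distinguished sections with $[\sigma]_{x} = [\sigma']_{x}$, then they agree on a neighbourhood of $x$, so all their transverse partial derivatives at $x$ coincide and $j^{k}_{x}\sigma = j^{k}_{x}\sigma'$. Hence $(x,[\sigma]_{x})\mapsto j^{k}_{x}\sigma$ is a well-defined map $\DS_{\g}(\pi_{B})\rightarrow J^{k}_{\t}(\pi_{B})$.

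For smoothness, I would take a plot $\rho:U\rightarrow\DS_{\g}(\pi_{B})$ and show that $\pi_{B}^{k,\g}\circ\rho$ is a plot of the manifold $J^{k}_{\t}(\pi_{B})$. Fixing $u_{0}\in U$, the plot description in Equation \eqref{rho} supplies a neighbourhood $V$ of $u_{0}$ and plots $\tilde{x}:V\rightarrow M$, $\tilde{\sigma}:V\rightarrow\DS(\pi_{B})$ with $\rho(u) = (\tilde{x}(u),[\tilde{\sigma}(u)]_{\tilde{x}(u)})$. Shrinking $V$, I may assume that $\tilde{x}(V)$ lies in a single foliated chart $(U_{\alpha},x_{\alpha},y_{\alpha})$ over which $B$ is trivialised by some $\tau_{\alpha}$, and that the functional diffeology on $C^{\infty}_{\loc}(M,B)$ (Definition \ref{functionaldiff2}) provides an open $W\subset U_{\alpha}$ on which the evaluation $(v,z)\mapsto\tilde{\sigma}(v)(z)$ is jointly smooth. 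Writing $\pi_{B}^{k,\g}\circ\rho$ in the jet coordinates of Remark \ref{jets}, its components are $x_{\alpha}\circ\tilde{x}$, $y_{\alpha}\circ\tilde{x}$, and the transverse partial derivatives $\frac{\partial^{|I|} f^{j^{1}\tilde{\sigma}(v)}_{\alpha}}{\partial(y^{\alpha})^{I}}$ evaluated at $y_{\alpha}(\tilde{x}(v))$ for $|I|\leq k$.

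The crux is that each such component depends smoothly on the plot parameter $v$. Composing the jointly smooth map $(v,z)\mapsto\tilde{\sigma}(v)(z)$ with the smooth coordinate maps $\varphi_{\alpha}$, $\tau_{\alpha}$, $\proj_{F}$ and $f_{\alpha}$ shows that $(v,x^{\alpha},y^{\alpha})\mapsto f^{j^{1}\tilde{\sigma}(v)}_{\alpha}(x^{\alpha},y^{\alpha})$ is jointly smooth, and since each $\tilde{\sigma}(v)$ is distinguished this expression is independent of $x^{\alpha}$, exactly as in Proposition \ref{1jets}. Taking partial derivatives in $y^{\alpha}$ preserves joint smoothness, so $(v,y^{\alpha})\mapsto\frac{\partial^{|I|} f^{j^{1}\tilde{\sigma}(v)}_{\alpha}}{\partial(y^{\alpha})^{I}}$ is jointly smooth; composing with the smooth map $v\mapsto y_{\alpha}(\tilde{x}(v))$ then yields smoothness in $v$. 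I expect this interchange---that differentiation in the base variable retains smoothness in the parameter---to be the only point requiring care, and it is precisely what the joint-smoothness formulation of the functional diffeology on locally defined functions guarantees. Hence $\pi_{B}^{k,\g}\circ\rho$ is smooth into the coordinate chart, so $\pi_{B}^{k,\g}$ is smooth.

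Finally, the relation $\pi_{B}^{l,\g} = \pi_{B}^{l,k}\circ\pi_{B}^{k,\g}$ is immediate from the definitions. The projection $\pi_{B}^{l,k}:J^{k}_{\t}(\pi_{B})\rightarrow J^{l}_{\t}(\pi_{B})$ of the jet tower of Remark \ref{jets} sends $j^{k}_{x}\sigma$ to $j^{l}_{x}\sigma$ by truncating the jet, so for every $(x,[\sigma]_{x})\in\DS_{\g}(\pi_{B})$ one has $\pi_{B}^{l,k}(\pi_{B}^{k,\g}(x,[\sigma]_{x})) = \pi_{B}^{l,k}(j^{k}_{x}\sigma) = j^{l}_{x}\sigma = \pi_{B}^{l,\g}(x,[\sigma]_{x})$, as required.
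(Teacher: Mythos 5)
Your proposal is correct and follows essentially the same route as the paper's proof: both reduce to a plot of the form $\rho(u) = (\tilde{x}(u),[\tilde{\sigma}(u)]_{\tilde{x}(u)})$ and observe that smoothness of $\tilde{\sigma}$ in the functional diffeology makes the transverse partial derivatives appearing in the jet coordinates of Remark \ref{jets} vary smoothly in the plot parameter, while the compatibility $\pi_{B}^{l,\g} = \pi_{B}^{l,k}\circ\pi_{B}^{k,\g}$ is read off from the definitions. You merely spell out in more detail (joint smoothness of the evaluation map, well-definedness on germs) what the paper states tersely.
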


\begin{proof}
	That $\pi_{B}^{l,\g} = \pi_{B}^{l,k}\circ\pi_{B}^{k,\g}$ is true follows from the definition.  Let us therefore check smoothness of $\pi_{B}^{k,\g}$.  Let $\rho:U\rightarrow\DS_{\g}(\pi_{B})$ be a plot, which we may assume without loss of generality to be of the form
	\[
	\rho(u) = \big(\tilde{x}(u),[\tilde{\sigma}(u)]_{\tilde{x}(u)}\big),\hspace{7mm}u\in U,
	\]
	where $\tilde{x}$ and $\tilde{\sigma}$ are plots of $M$ and $\DS(\pi_{B})$ respectively.  We must check that $\pi_{B}^{k,\g}\circ\rho:U\rightarrow J^{k}_{\t}(\pi_{B})$ is smooth.  We have
	\[
	\pi_{B}^{k,\g}\circ\rho(u) = j^{k}_{\tilde{x}(u)}\tilde{\sigma}(u).
	\]
	Smoothness of $\tilde{\sigma}$ guarantees that all the partial derivatives of $\tilde{\sigma}(u)$, taken with respect to any foliated chart about $\tilde{x}(u)$ and any local trivialisation of $B$ over this chart, vary smoothly with respect to $u$.  It follows by definition of the coordinates on $J^{k}_{\t}(\pi_{B})$ (see Remark \ref{jets}) that $\pi_{B}^{k,\g}\circ\rho$ is smooth.
\end{proof}

As a consequence of Proposition \ref{gjet} and the universal property of the projective limit, the tower
\begin{center}
	\begin{tikzcd}[row sep = huge]
	 & & & J^{\infty}_{\t}(\pi_{B}) \ar[dl,"\pi_{B}^{k+1,\infty}"] \ar[dr,"\pi_{B}^{k,\infty}"'] \ar[drrr,"\pi_{B}^{0,\infty}"'] & & &\\ & \hdots\ar[r] & J^{k+1}_{\t}(\pi_{B}) \ar[rr,"\pi_{B}^{k,k+1}"'] & & J^{k}_{\t}(\pi_{B}) \ar[r] & \hdots \ar[r] & B 
	\end{tikzcd}
\end{center}
``completes" to a tower
\begin{center}
	\begin{tikzcd}[row sep = large]
	& & & \DS_{\g}(\pi_{B}) \ar[d,"\pi_{B}^{\infty,\g}"] & & & \\ & & & J^{\infty}_{\t}(\pi_{B}) \ar[dl,"\pi_{B}^{k+1,\infty}"] \ar[dr,"\pi_{B}^{k,\infty}"'] \ar[drrr,"\pi_{B}^{0,\infty}"'] & & &\\ & \hdots\ar[r] & J^{k+1}_{\t}(\pi_{B}) \ar[rr,"\pi_{B}^{k,k+1}"'] & & J^{k}_{\t}(\pi_{B}) \ar[r] & \hdots \ar[r] & B
	\end{tikzcd}
\end{center}
of bundles over $M$.  A similar fact is true for the holonomy groupoid $\HH(T_{\pi_{B}^{\g}})$ of $\pi_{B}$ and the fibre holonomy groupoids $\HH(T_{\pi_{B}^{k}})$ associated to the foliated jet bundles $\pi_{B}^{k}:J^{k}_{\t}(\pi_{B})\rightarrow M$.

\begin{thm}\label{hierarchy}
	Let $\pi_{B}:B\rightarrow M$ be a foliated bundle.  Then for each $k\in\NB$, there are surjective morphisms $\Pi_{B}^{k,\g}:\HH(T_{\pi_{B}^{\g}})\rightarrow\HH(T_{\pi_{B}^{k}})$ and $\Pi_{B}^{k,k+1}:\HH(T_{\pi_{B}^{k+1}})\rightarrow\HH(T_{\pi_{B}^{k}})$ of diffeological groupoids for which $\Pi_{B}^{k,\g} = \Pi_{B}^{k,k+1}\circ\Pi_{B}^{k+1,\g}$.  Consequently we have a tower
	\begin{center}
		\begin{tikzcd}[row sep = large]
		& & & \HH(T_{\pi_{B}^{\g}}) \ar[d,"\Pi_{B}^{\infty,\g}"] & & & \\ & & & \HH(T_{\pi_{B}^{\infty}}) \ar[dl,"\Pi_{B}^{k+1,\infty}"] \ar[dr,"\Pi_{B}^{k,\infty}"'] \ar[drrr,"\Pi_{B}^{0,\infty}"'] & & &\\ & \hdots\ar[r] & \HH(T_{\pi_{B}^{k+1}}) \ar[rr,"\Pi_{B}^{k,k+1}"'] & & \HH(T_{\pi_{B}^{k}}) \ar[r] & \hdots \ar[r] & \HH(T_{\pi_{B}})
		\end{tikzcd}
	\end{center}
	of diffeological groupoids, which we refer to as the \textbf{hierarchy of holonomy groupoids} for the foliated bundle $\pi_{B}$.
\end{thm}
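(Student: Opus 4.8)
The plan is to exploit the fact that every groupoid appearing in the hierarchy is a diffeological quotient of the \emph{same} leafwise path category $\PP_{T\FF}(M)$, differing only in the equivalence relation used to form the quotient. Writing $q^{\g}:\PP_{T\FF}(M)\rightarrow\HH(T_{\pi_{B}^{\g}})$ and $q^{k}:\PP_{T\FF}(M)\rightarrow\HH(T_{\pi_{B}^{k}})$ for the defining subductions, a morphism $\Pi_{B}^{k,\g}$ satisfying $\Pi_{B}^{k,\g}\circ q^{\g} = q^{k}$ will exist as soon as one shows that the relation $\sim_{T_{\pi_{B}^{\g}}}$ refines $\sim_{T_{\pi_{B}^{k}}}$; that is, that $T_{\pi_{B}^{\g}}(\gamma_{1},d_{1}) = T_{\pi_{B}^{\g}}(\gamma_{2},d_{2})$ implies $T_{\pi_{B}^{k}}(\gamma_{1},d_{1}) = T_{\pi_{B}^{k}}(\gamma_{2},d_{2})$.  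The same scheme, applied to the smooth projection $\pi_{B}^{k,k+1}:J^{k+1}_{\t}(\pi_{B})\rightarrow J^{k}_{\t}(\pi_{B})$ in place of $\pi_{B}^{k,\g}$, will yield $\Pi_{B}^{k,k+1}$.

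The heart of the argument is the intertwining identity
\[
\pi_{B}^{k,\g}\circ T_{\pi_{B}^{\g}}(\gamma,d) = T_{\pi_{B}^{k}}(\gamma,d)\circ\pi_{B}^{k,\g}
\]
of maps between the fibres over $\gamma(0)$ and $\gamma(d)$, where I write $\pi_{B}^{k,\g}$ also for its fibrewise restriction $(x,[\sigma]_{x})\mapsto j^{k}_{x}\sigma$ of Proposition \ref{gjet}.  To prove this I would first verify that $\pi_{B}^{k,\g}$ is a morphism of foliated bundles, in the sense that $d(\pi_{B}^{k,\g})(H^{\g}_{B})\subset T\FF_{k}$, where $\FF_{k}$ is the foliation of $J^{k}_{\t}(\pi_{B})$ of Remark \ref{jets2}.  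This is immediate from the description of $H^{\g}_{B}$ by the horizontal plots $\rho(\sigma,\gamma)(t) = (\gamma(t),[\sigma]_{\gamma(t)})$, whose images $t\mapsto j^{k}_{\gamma(t)}\sigma$ are precisely the leafwise curves spanning $T\FF_{k}$.  Granting this, for $(x,[\sigma]_{x})\in\DS_{\g}(\pi_{B})$ the projected lift $\pi_{B}^{k,\g}\circ\gamma_{[\sigma]_{x}}$ of the horizontal lift $\gamma_{[\sigma]_{x}}$ from Theorem \ref{liftingB} is a curve in $J^{k}_{\t}(\pi_{B})$ which covers $\gamma$, starts at $j^{k}_{x}\sigma$, and is tangent to $T\FF_{k}$; by the uniqueness clause of Theorem \ref{transport} applied to the foliated bundle $\pi_{B}^{k}$, it must coincide with the horizontal lift defining $T_{\pi_{B}^{k}}$, and evaluation at $t = d$ gives the identity.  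This compatibility of the two parallel transports is the main obstacle; everything that follows is formal.

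From the intertwining identity the refinement of equivalence relations is immediate: if $T_{\pi_{B}^{\g}}(\gamma_{1},d_{1}) = T_{\pi_{B}^{\g}}(\gamma_{2},d_{2})$, then $T_{\pi_{B}^{k}}(\gamma_{i},d_{i})\circ\pi_{B}^{k,\g}$ agrees for $i = 1,2$, and since $\pi_{B}^{k,\g}$ is fibrewise surjective (every transverse $k$-jet is the $k$-jet of some distinguished section, by the very definition of $J^{k}_{\t}(\pi_{B})$) the two $k$-jet transports agree.  Hence $\Pi_{B}^{k,\g}([(\gamma,d)]):=[(\gamma,d)]$ is well-defined.  It is a surjective morphism of groupoids because it is induced by $\id_{\PP_{T\FF}(M)}$, which respects concatenation and the path-reversal $\iota$ of Proposition \ref{inverses} governing inversion in each quotient; it is smooth by the universal property of the subduction $q^{\g}$, since $\Pi_{B}^{k,\g}\circ q^{\g} = q^{k}$ is smooth.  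The relation $\Pi_{B}^{k,\g} = \Pi_{B}^{k,k+1}\circ\Pi_{B}^{k+1,\g}$ holds on the nose, as all three maps are induced by the identity on $\PP_{T\FF}(M)$ (equivalently, it descends from $\pi_{B}^{k,\g} = \pi_{B}^{k,k+1}\circ\pi_{B}^{k+1,\g}$ of Proposition \ref{gjet}).  Finally, the infinite level $\HH(T_{\pi_{B}^{\infty}})$ associated with the projective-limit bundle $J^{\infty}_{\t}(\pi_{B})$, together with the maps $\Pi_{B}^{\infty,\g}$ and $\Pi_{B}^{k,\infty}$ and the commutativity of the displayed diagram, follows from the universal property of the diffeological projective limit: the compatible family $\{\Pi_{B}^{k,\g}\}_{k\in\NB}$ induces a unique morphism $\Pi_{B}^{\infty,\g}$ through which every $\Pi_{B}^{k,\g}$ factors.
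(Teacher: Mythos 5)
Your proposal is correct and follows essentially the same route as the paper: the paper's proof also rests on the commuting square intertwining the lifting maps $L_{B}^{\g}$ and $L_{B}^{k}$ via the induced functor $\PP\pi_{B}^{k,\g}$ (your intertwining identity, which you justify slightly more explicitly via horizontality of the projected lift and the uniqueness clause of the finite-dimensional lifting theorem), then deduces the refinement of the kernels using fibrewise surjectivity of $\pi_{B}^{k,\g}$, and obtains smoothness from the quotient diffeology and the infinite level from the universal property of the projective limit. No substantive differences.
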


\begin{proof}
	Suppose that elements $(\gamma_{1},d_{1})$ and $(\gamma_{2},d_{2})$ of $\PP_{T\FF}(M)$ are contained in the same fibre of $T_{\pi_{B}^{\g}}:\PP_{T\FF}(M)\rightarrow\Aut(\pi_{\DS_{\g}(\pi_{B})})$.  Then they must also be contained in the same fibre of $T_{\pi_{B}^{k}}:\PP_{T\FF}(M)\rightarrow\Aut(\pi_{B}^{k})$ for any $k$.  Indeed, consider the liftings $L^{\g}_{B}:\PP_{T\FF}(M)\times_{s,\pi_{\DS_{\g}(M,\FF)}}\DS_{\g}(M,\FF)\rightarrow \PP_{H^{\g}_{B}}(\DS_{\g}(\pi_{B}))$ and $L^{k}_{B}:\PP_{T\FF}(M)\times_{s,\pi_{B}^{k}}J^{k}_{\t}(\pi_{B})\rightarrow\PP_{H^{k}_{B}}(J^{k}_{\t}(\pi_{B}))$ of Theorem \ref{liftingB} and Remark \ref{jets2} respectively.  Then the projection $\pi_{B}^{k,\g}:\DS_{\g}(\pi_{B})\rightarrow J^{k}_{\t}(\pi_{B})$ induces a surjective functor $\PP\pi_{B}^{k,\g}:\PP(\DS_{\g}(\pi_{B}))\rightarrow\PP(J^{k}_{\t}(\pi_{B}))$, which in particular sends $\PP_{H^{\g}_{B}}(\DS_{\g}(\pi_{B}))$ onto $\PP_{H^{k}_{B}}(J^{k}_{\t}(\pi_{B}))$ by Proposition \ref{gjet}.  By Theorem \ref{TfB}, the diagram
	\begin{center}
		\begin{tikzcd}
		\PP_{T\FF}(M)\times_{s,\pi_{\DS_{\g}(\pi_{B})}}\DS_{\g}(\pi_{B}) \ar[r, "L_{B}^{\g}"] \ar[d, "\id\times\pi_{B}^{k,\g}"] & \PP_{H^{\g}_{B}}(\DS_{\g}(\pi_{B})) \ar[d,"\PP\pi_{B}^{k,\g}"] \\ \PP_{T\FF}(M)\times_{s,\pi_{B}^{k}}J^{k}_{\t}(\pi_{B}) \ar[r, "L_{B}^{k}"] & \PP_{H^{k}_{B}}(J^{k}_{\t}(\pi_{B}))
		\end{tikzcd}
	\end{center}
	commutes.  Thus, letting $r(\gamma,d):=\gamma(d)$ denote the range map on any path space $\PP(X)$, we have $T_{\pi_{B}^{\g}}(\gamma_{1},d_{1}) = T_{\pi_{B}^{\g}}(\gamma_{2},d_{2})$ if and only if the equality 
	\[
	r\circ L_{B}^{\g}((\gamma_{1},d_{1}),(x,[\sigma]_{x}))(d_{1}) = r\circ L_{B}^{\g}((\gamma_{2},d_{2}),(x,[\sigma]_{x}))(d_{2})
	\] 
	holds for all $(x,[\sigma]_{x})\in\DS_{\g}(\pi_{B})$, which implies the equality
	\[
	r\circ L_{B}^{\g}((\gamma_{1},d_{1}),j^{k}_{x}\sigma)(d_{1}) = r\circ L_{B}^{\g}((\gamma_{2},d_{2}),j^{k}_{x}\sigma)(d_{2}),
	\]
	for all $j^{k}_{x}\sigma\in J^{k}_{\t}(\pi_{B})_{x}$.  Hence $T_{\pi_{B}^{k}}(\gamma_{1},d_{1}) = T_{\pi_{B}^{k}}(\gamma_{2},d_{2})$.  Thus the surjective map
	\[
	\Pi_{B}^{k,\g}:\HH(T_{\pi_{B}^{\g}})\ni[(\gamma,d)]_{\g}\mapsto[(\gamma,d)]_{k}\in\HH(T_{\pi_{B}^{k}})
	\]
	is a well-defined groupoid morphism which is, by definition of the quotient diffeology, smooth.  A similar argument shows that
	\[
	\Pi_{B}^{k,k+1}:\HH(T_{\pi_{B}^{k+1}})\ni[(\gamma,d)]_{k+1}\mapsto[(\gamma,d)]_{k}\in\HH(T_{\pi_{B}^{k}})
	\]
	is a well-defined morphism of diffeological groupoids, and the equation $\Pi_{B}^{k,\g} = \Pi_{B}^{k,k+1}\circ\Pi_{B}^{k+1,\g}$ is clear.  Finally the commuting diagram given in the statement follows from the universal property of the projective limit.
\end{proof}

	\bibliographystyle{amsplain}
	\bibliography{references}
	
\end{document}